\newtheorem{Theorem}{Theorem}
\newtheorem{lemm}{Lemma}
\newtheorem{Assumption}{Assumption}
\newtheorem{Proposition}{Proposition}
\newtheorem{Remark}{Remark}
\newtheorem{Corollary}{Corollary}
\newtheorem{Example}{Example}
\DeclareMathOperator*{\var}{Var}
\newcommand{\cs}{\mathcal{S}}
\newcommand{\invtn}{{\dagger}}
\newcommand{\convP}{\stackrel{P}{\to}}
\newcommand{\convd}{\stackrel{d}{\to}}
\title{Estimating the conditional distribution in functional regression problems}
\author[a]{Siegfried H\"ormann}
\author[a]{Thomas Kuenzer}
\author[b]{Gregory Rice}
\affil[a]{Institute of Statistics, Graz University of Technology, Graz, Austria}
\affil[b]{Department of Statistics and Actuarial Science, University of Waterloo, Canada}
\begin{document}

\maketitle

\begin{abstract}
We consider the problem of consistently estimating the conditional distribution $P(Y \in A |X)$ of a functional data object $Y=(Y(t): t\in[0,1])$ given covariates $X$ in a general space, assuming that $Y$ and $X$ are related by a functional linear regression model.
Two natural estimation methods are proposed, based on either bootstrapping the estimated model residuals, or fitting functional parametric models to the model residuals and estimating $P(Y \in A |X)$ via simulation. Whether either of these methods lead to consistent estimation depends on the consistency properties of the regression operator estimator, and the space within which $Y$ is viewed.
We show that under general consistency conditions on the regression operator estimator, which hold for certain functional principal component based estimators, consistent estimation of the conditional distribution can be achieved, both when $Y$ is an element of a separable Hilbert space, and when $Y$ is an element of the Banach space of continuous functions.
The latter results imply that sets $A$ that specify path properties of $Y$, which are of interest in applications, can be considered. The proposed methods are studied in several simulation experiments, and data analyses of electricity price and pollution curves.
\end{abstract}

\bigskip

\noindent {\bf Keywords:} functional regression, functional time series, conditional distribution, quantile estimation, bootstrap

\clearpage

\section{Introduction}

We suppose that we have observed data $(Y_1,X_1),\ldots, (Y_n,X_n)$ from a strictly stationary process $(Y_k,X_k)_{k\in \mathbb{Z}}$  that are assumed to follow  a general  functional linear regression model of the form
\begin{equation}\label{mod}
Y_k=\varrho(X_k)+\varepsilon_k.
\end{equation}
Here $Y_k=(Y_k(t)\colon t\in [0,1])$ is a curve in a normed function space $H_2$, the covariates $X_k$ take values in a normed space $H_1$ and are distributed so that $X_k$ is independent of the model error $\varepsilon_k$, and $\varrho$ is a linear operator mapping $H_1$ to $H_2$. For example, $X_k$ might be a single curve living in the same space as the response, in which case \eqref{mod} describes simple linear function-on-function regression. This setting also includes functional autoregressive models \citep{bosq:2000} when $X_k = Y_{k-1}$. Generally though, $X_k$ might be comprised of several curves, a mixture of curves and scalar covariates, etc., and more detailed assumptions on the nature of the space $H_2$ will follow.

Suppose $(Y,X)$ is a generic pair following \eqref{mod}. The goal of this paper is to introduce and study methods to consistently estimate the conditional distribution of $Y$ given $X$, $P(Y\in A|X)$, for some specific sets of interest $A\subset H_2$. By choosing appropriate sets $A$, one may make inference on a wide range of interesting properties of $Y$:
\begin{enumerate}[topsep=5pt,itemsep=2pt]
\item  Often we are interested in some transformation $T$ of the response, and then might consider sets of the form
$A=\{y\colon T(y)\in B\}$. For instance, %
when $Ty=\lambda(\{t : y(t) \in B\})$, with $\lambda$ denoting standard Lebesgue measure on $[0,1]$, $A$ contains curves  that occupy a range of interest for a certain amount of time. More generally, when $T(y)$ is a scalar, then we are often interested in the conditional distribution function $$F(z|X)=P(Y\in T^{-1}(-\infty,z]|X).$$

\item Similarly, when $Z=T(Y)$ is again a scalar, for $p\in (0,1)$, we may wish to estimate the conditional quantile function $q_p(Z|X):=\inf\{z\in \mathbb{R}\colon F(z|X)\geq p\}$. In financial applications and when $p$ is close to zero or one, then estimating $q_p(Z|X)$ is related to  Value-at-Risk (VaR) estimation. See \cite{kato:2012} and \cite{sang:2020}.
\item We might wish to choose $A$ such that it yields a prediction set for $Y$, so that $P(Y\in A_p|X)=p$ for a given $p\in (0,1)$. Estimating $P(Y\in A_p|X)$ can be used to appropriately calibrate $A_p$. See \cite{goldsmith:2013}, \cite{choi:2016},  \cite{liebl:2019}, \cite{hyndman:shang:2009}, and \cite{paparoditis:2020} for a review of methods for constructing prediction sets for functional responses and parameters.
\end{enumerate}

At this point, when referring to examples (a) and (b), an important remark is necessary. In the case where $Z=T(Y)$ is scalar, it might appear more natural to directly employ some scalar-on-function regression with response variable $Z$. \emph{However, one of the main strengths of the approach we pursue and which is a clear distinction to competitive methods, is that we first model the entire response curve and then extract the feature of interest.
This has the advantage that we can harness the full information contained in the functional responses when estimating the conditional distribution of $Z$.} 

Aside from interest in the general problem, this work was primarily motivated by the statistical challenge of forecasting aspects of response curves $Y_k$ describing daily electricity prices. The specific data that we consider consists of hourly electricity prices, demand, and wind energy production in Spain over the period from 2014 to 2019, which includes observations from 2191 days (the data are available at \texttt{www.esios.ree.es}). We project the hourly data onto a basis of 18 twice differentiable B-splines to construct daily  price, demand, and wind energy production curves, as illustrated in Figure~\ref{fig:spanishdata}. The price of electricity naturally fluctuates based on supply and demand, and exhibits daily, weekly, and yearly seasonality. The rather predictable variation in demand does not influence
the price as much as surges in wind energy production, especially if they occur on days with weak demand. Letting $Y_k$ denote the price curves and $X_k$ the vector of the demand and wind curves, both adjusted for yearly seasonality and trends, we then model $Y_k$ using an FAR(7) model with exogenous variables
\begin{equation}\label{e:spainfarx}
 Y_k = \sum_{i=1}^7 \Psi_i  Y_{k-i} + \varrho X_k + \varepsilon_k,
\end{equation}
where $\Psi_1,...,\Psi_7$ denote autoregressive operators; see  \cite{gonzalez:munoz:perez:2018}. The details of this are explained in Section~\ref{s:realdata:sim}, but for now it suffices to acknowledge that this is a regression model of the form \eqref{mod}. For such electricity price curves, their likelihood of falling within sets of the following type are of particular interest:

\begin{Example}[Level sets]\label{E:levelsets}
Let
\[
A_{\alpha,z} = \big\{ y\in H_2\colon \lambda(t\colon y(t)>\alpha)\leq z \big\}
\]
for some $z\in [0,1]$ and $\alpha\in \mathbb{R}$. $A_{\alpha,z}$ contains curves that stay a limited amount of time $z$ above a threshold $\alpha$.
\end{Example}

Forecasting whether price or demand curves will spend prolonged periods of time above certain levels is useful in anticipating volatility in continuous intraday electricity markets, and planning for peak loads \citep{vilar:2012}. This falls within the scope of the general problem we consider.

\begin{figure}[t]
\includegraphics[width=\textwidth, trim=0 18 0 2mm,clip]{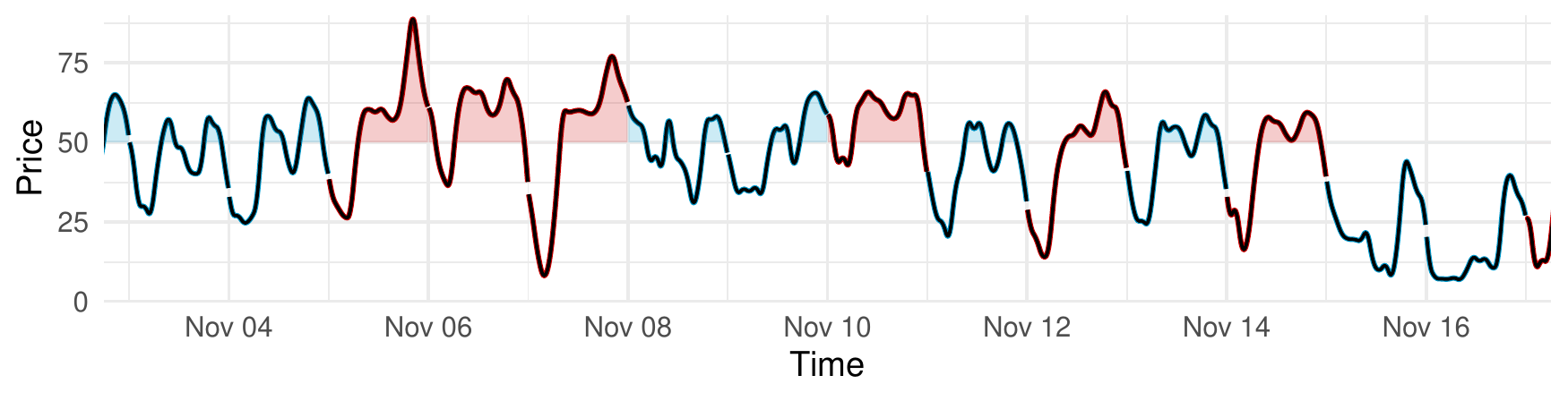}
\includegraphics[width=\textwidth, trim=0 29 0 1mm,clip]{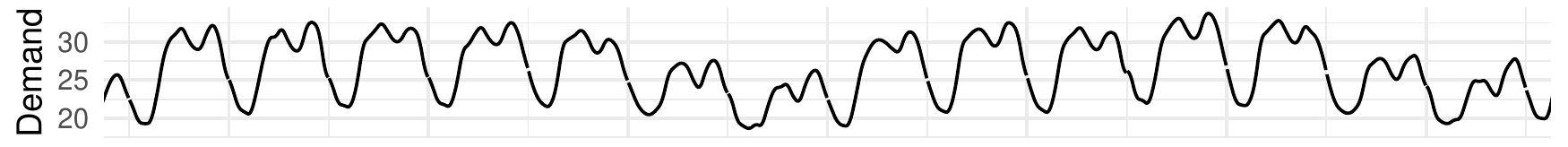}
\includegraphics[width=\textwidth, trim=0 18 0 1mm,clip]{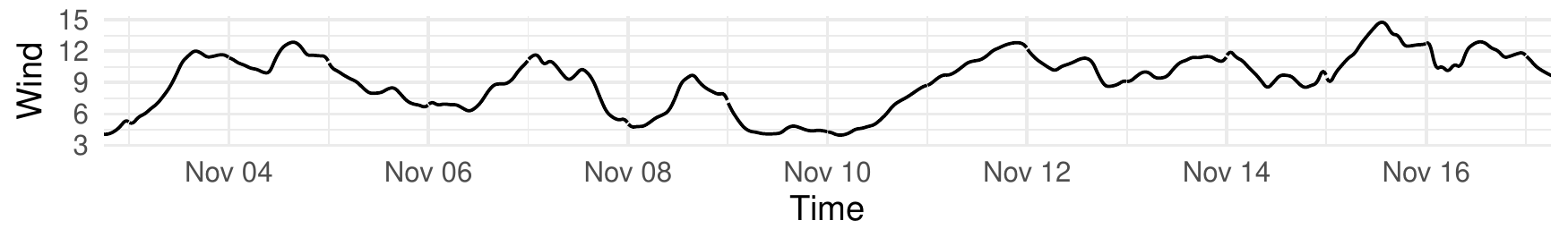}
\includegraphics[width=\textwidth, trim=3 5 0 0mm,clip]{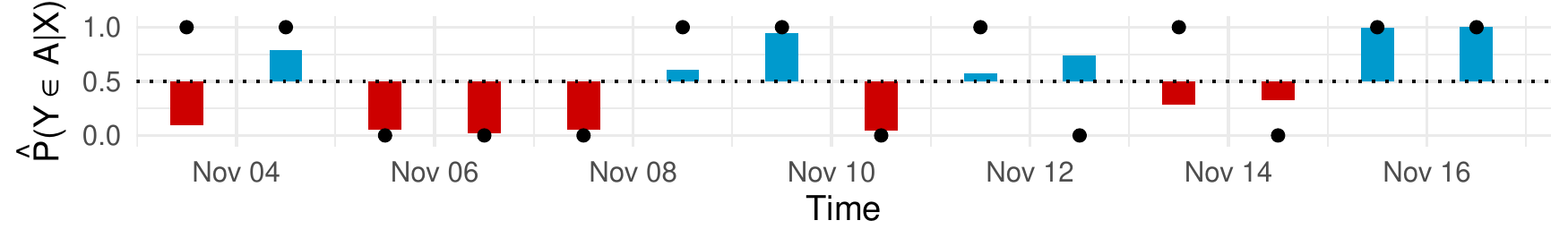}
\caption{Spanish electricity data on price, demand, and wind energy production during two weeks in November 2014. Price curves are colored blue or red according to whether or not
they lie in the level set $\{y \in H_2: \lambda(t: y(t) > 50) \leq 0.5 \}$.
The bar plot on the bottom shows
the estimated conditional probability for $Y_k$ to lie in this set,
with the decision threshold $1/2$, indicated by a dotted line, whether the event occurred is indicated by black dots.}
\label{fig:spanishdata}
\end{figure}

The literature in functional data analysis on regression models of the form \eqref{mod} is vast, although the most frequent problems considered regarding consistent estimation in model \eqref{mod} are (i) how to find a consistent estimator $\hat\varrho_n$  of $\varrho$, and (ii) how to forecast consistently, i.e.\ to guarantee that $\hat\varrho_n(X)-\varrho(X)\to 0$ suitably in probability. Moreover, the majority of the literature on the topic of  function-on-function linear regression concentrates on the  setting when $H_1=H_2=L^2[0,1]$, the separable Hilbert space of square integrable functions on $[0,1]$, equipped with its standard inner product and norm.  \cite{ramsay:silverman:2006} for example proposes a double truncation scheme based on functional principal component analysis to estimate $\varrho$ in this setting, and \cite{mas:2007}, \cite{imaizumi:kato:2018} derive a convergence rate for
$\| \hat\varrho_n - \varrho \|_\cs$ in a ``single-truncation'' estimation scheme based on an increasing (in the sample size) number of principal components, where here $\| \cdot \|_\cs$ denotes the Hilbert--Schmidt norm. Similar consistency results for the resulting forecasts in functional linear regression can be found in \cite{crambes:mas:2013}, and under general stationarity conditions and in the FAR setting in \cite{hormann:kidzinski:2015} and \cite{aue:2015}. Estimating the operator $\varrho$ can be viewed as a special case of estimating the conditional mean $E(Y|X)$, and this general problem has also been extensively considered; see \cite{Chiou04functionalresponse}, \cite{ferraty:2012:fonfregressionbootstrap}, and \cite{wang:chiou:muller:review}.

The problem of estimating the conditional distribution of $Y$ given $X$ has been comparatively far less studied. Numerous methods have been proposed to estimate the conditional distribution of a scalar response $Y$ with a functional covariate $X$, including  \cite{chen:muller:2012:funquantile}, \cite{kato:2012}, \cite{yao:suechee:wang}, \cite{wang:chiou:muller:review}, and \cite{sang:2020}, who propose estimators based on quantile regression, and \cite{ferraty:vieu:2006}, who propose Nadaraya--Watson style kernel-smoothed estimators. Estimating the conditional distribution of $Y$ when $Y$ and $X$ take values in a general function space is largely unexplored to our knowledge, even in the context of model \eqref{mod}. \cite{fernandez:guillas:manteiga:2005:bootstrap} and \cite{paparoditis:2020} develop bootstrap procedures based on functional principal component analysis to produce prediction sets in the context of forecasting with Hilbertian FAR models, which can be viewed as a special case of this problem. For functional data taking values in $L^2[0,1]$, \cite{chen:muller:2016} and \cite{fan:muller:2021} develop methods for estimating the conditional distribution of $Y$ given $X$ assuming $X$ and $Y$ are jointly Gaussian, and that the conditional distribution of the response has sample paths satisfying natural differentiability conditions.

A technical problem that is encountered in consistently estimating the conditional probability $P(Y\in A | X)$ is that one could at most expect consistent estimation for continuity sets of the distribution of the response, i.e. sets $A$ for which $P(Y\in \partial A )=0$. This property evidently depends strongly on the choice of the space $H_2$, as well as the norm that it is equipped with. For many interesting examples, the metric on the space $L^2[0,1]$ is too weak to allow for meaningful continuity sets $A$. An illustrative example is simple prediction band sets of the form $A=\{y\colon \lambda( t : a(t)<y(t)<b(t))=1\}$, where $a$ and $b$ are continuous functions on $[0,1]$, in which case $\partial A=A$ when $A$ is viewed as a subset of $L^2[0,1]$. More appropriate spaces to handle many interesting examples in functional data analysis involving path properties of the response, such as level sets, are the spaces $C[0,1]$, the space of continuous function on $[0,1]$ equipped with the supremum norm, or the Sobolev spaces equipped with their canonical norms; see \cite{brezis:2010}. For these latter spaces, the problem of estimating, and consistently forecasting with, $\varrho$ has been only lightly studied to date, and in specialized settings; see \cite{pumo:1999}, \cite{RUIZMEDINA:2019:banachFAR}, and \cite{bosq:2000} in the context of FAR estimation. Further, the problem of consistently estimating the conditional distribution $P(Y\in A | X)$ in these settings has not been studied, to our knowledge. We also refer the reader to \cite{dette:kokot:aue:2020} for a review of functional data analysis methods in $C[0,1]$.

In this paper, we propose natural procedures to estimate $P(Y\in A |X)$, in which we first estimate $\varrho$ with a suitably consistent estimator $\hat{\varrho}_n$, and then either (i) resample the estimated residuals $\hat{\varepsilon}_{k,n} = Y_k  - \hat{\varrho}_n(X_k)$ to estimate $P(Y\in A |X)$ with the empirical distribution of $\hat{\varrho}_n(X) + \hat{\varepsilon}_{k,n}$, or (ii) assuming Gaussianity of the model errors $\varepsilon_k$, we estimate $P(Y\in A |X)$ using simulation by modelling $Y$ conditioned on $X$ as a Gaussian process with mean $\hat{\varrho}_n(X)$, and covariance estimated from the residual sequence $\hat{\varepsilon}_{k,n}$. We establish general conditions on the estimator $\hat{\varrho}_n$ in both the settings when $H_2$ is a separable Hilbert space, and when $H_2$ is $C[0,1]$, such that these procedures will lead to consistent estimation of $P(Y\in A |X)$.
Subsequent to this, we define an estimator $\hat{\varrho}_n$ that we show satisfies these conditions under regularity assumptions on the operator $\varrho$, and the process $(Y_k,X_k)_{k\in \mathbb{Z}}$, which allow for serial dependence of both the response and covariates. In the space $H_2=C[0,1]$, we introduce a number of examples of sets $A$ of potential interest, compute their boundaries, and establish under what conditions $P(Y\in \partial A )=0$, which can be non-trivial even when $Y$ is a Gaussian process. In several simulation studies and data analyses, the proposed methods generally outperformed competing methods in which $P(Y\in A |X)$ is estimated using functional logistic regression, functional Nadaraya--Watson estimation, or functional quantile regression. Another advantage, which derives from the simple form of our estimators for $P(Y\in A |X)$, is that they satisfy basic properties of a probability measure, e.g.\ they are monotone in $A$. While this may seem like an obvious requirement, it is not necessarily fulfilled by some competing approaches.

The rest of the paper is organized as follows. In Section~\ref{sec-1}, we formally introduce the methods described above to estimate $P(Y\in A |X)$, and present results on their consistency, including results on uniform consistency over monotone families of sets $A$ that are relevant in constructing prediction sets with a specified coverage and quantile function estimates. These results depend on the properties of the estimator $\hat{\varrho}_n$, and we define an estimator based on functional principal component analysis and single truncation scheme, and establish that it leads to consistent estimation of $P(Y\in A |X)$ when $H_2$ is a separable Hilbert space and when $H_2=C[0,1]$ in Section~\ref{s:theoretical}. Section~\ref{s:partialA} presents numerous examples of sets $A$ of interest, and a discussion of their boundaries in $C[0,1]$. A number of competing methods are introduced in Section~\ref{s:realdata:sim}, and these are compared and studied with the proposed methods in several simulations studies and real data illustrations. The proofs of all technical results follow these main sections.

\section{Estimation procedures and consistency results}\label{sec-1}

We let $\int = \int_{0}^{1}$, and for $f,g \in L^2[0,1]$, we use the notation $\langle f , g \rangle = \int f(t)g(t)dt$ to denote the standard inner product on $L^2[0,1]$, with induced norm $\|\cdot\|_{L^2}^2  = \langle \cdot , \cdot \rangle$.
In order to consider path properties of functions in $H_2$, we consider the space $C[0,1]$ equipped with the supremum norm $\|f\|_\infty = \sup_{t\in [0,1]} \big| f(t) \big|$. While $L^2[0,1]$ is a separable Hilbert space, $C[0,1]$ is a Banach space, with their norms satisfying $\|\cdot\|_{L^2} \leq \|\cdot\|_{\infty}$. The space $C[0,1]$ may hence be naturally embedded in $L^2[0,1]$.
We use the tensor product notation $\otimes$ to denote the operator $a\otimes b(\cdot) = a\langle \cdot,b\rangle$ if $b$ is viewed as an element of a Hilbert space, and the kernel integral operator with kernel $a\otimes b (t,s) = a(t)b(s)$ if $b$ is viewed as an element of $C[0,1]$. We assume that the covariate space $H_1$ is a Hilbert space with norm
$\|\cdot\|_{H_1}$. In order to lighten the notation, and when it is clear from the context, we write $\|\cdot\|$ in place of a specific norm on either the space $H_1$ or $H_2$. In the context of Hilbert spaces,
we use $\|\cdot\|_\mathcal{S}$ and $\|\cdot\|_1$ to denote the Hilbert--Schmidt norm and the trace norms of operators, respectively.

We assume throughout that the covariates $X_k$ and the model errors $\varepsilon_k$ satisfy the following independence condition, which we do not explicitly state in the below results, but take as granted.

\begin{Assumption}\label{a:errorind} In model \eqref{mod}, $E\varepsilon_k = 0$, and $\varepsilon_k$  is independent from $(X_j)_{j\leq k}$ for all $k \in \mathbb{Z}$.
\end{Assumption}

In order to formally describe the methods we use to estimate $P(Y\in A|X)$, we assume that we may consistently estimate $\varrho$ with an estimator $\hat{\varrho}_n$ based on the sample  $(Y_1,X_1),...,(Y_n,X_n)$. Specific conditions on this estimator, and examples satisfying these conditions, will follow. The first method we describe is based on applying an i.i.d.\ bootstrap to the estimated residuals.

\medskip

\noindent\underline{ Algorithm~1, Residual Bootstrap (abbreviated {\bf boot}):}
\begin{enumerate}
\item Estimate $\varrho$ in \eqref{mod} with $\hat\varrho_n$.
\item Calculate the model residuals $\hat\varepsilon_{k,n} = Y_k-\hat\varrho_n(X_k)$.
\item Define the estimator of $P(Y\in A|X)$ as
\[
\hat{P}_n^\text{B}(Y\in A|X)=\frac{1}{n}\sum_{k=1}^n \mathds{1}\{\hat\varrho_n(X)+\hat\varepsilon_{k,n}\in A\}.
\]
\end{enumerate}

We show below that this estimator is consistent under quite mild conditions. The algorithm {\bf boot} can be applied without specific distributional assumptions on the errors.
If though the model errors $\varepsilon_k$ are thought to be Gaussian processes, then we may take this into account in estimating $P(Y\in A |X)$. As the mean of the model errors is zero by assumption, in this case their distribution is determined by their covariance $\Gamma = \var(\varepsilon_k)$, which is defined by
\[
\var(\varepsilon_k) = E\big[ \varepsilon_k \otimes \varepsilon_k \big].
\]
The above algorithm may then be adapted as follows:

\medskip

\noindent\underline{ Algorithm~2, Gaussian process simulation (abbreviated {\bf Gauss}):}
\begin{enumerate}
\item Estimate $\varrho$ in \eqref{mod} with $\hat\varrho_n$.
\item Calculate the model residuals $\hat\varepsilon_{k,n} = Y_k-\hat\varrho_n(X_k)$.
\item Estimate the empirical covariance operator
\begin{align}
\widehat\Gamma_{\varepsilon,n} = \frac{1}{n}\sum_{k=1}^n (\hat\varepsilon_{k,n}-\bar{\varepsilon}_{\cdot,n})\otimes(\hat\varepsilon_{k,n}-\bar{\varepsilon}_{\cdot,n}),
\label{e:gammahat}
\end{align}
where $\bar{\varepsilon}_{\cdot,n}=\frac{1}{n}\sum_{k=1}^n \hat\varepsilon_{k,n}$. Let $\hat{\nu}_1 \ge \hat{\nu}_2 \ge \cdots $ denote the ordered eigenvalues of $\widehat\Gamma_{\varepsilon,n}$, with corresponding eigenfunctions $(\hat{\psi}_j)_{j\ge 1}$ satisfying $\widehat\Gamma_{\varepsilon,n}(\hat{\psi}_j) = \hat{\nu}_j \hat{\psi}_j$, $\langle \hat{\psi}_j, \hat{\psi}_\ell \rangle = \mathds{1}\{j = \ell\}$.

\item  Let $\{Z_i, \; i\ge 1\}$ denote a sequence of i.i.d.\ standard normal random variables, independent of the sample $(Y_1,X_1),...,(Y_n,X_n)$, and define
\[
\varepsilon^{(n)} = \sum_{j=1}^{\infty}\hat{\nu}_j^{1/2}  Z_j \hat{\psi}_j.
\]
Note that conditionally on the sample, in particular on $\widehat\Gamma_{\varepsilon,n}$, $\varepsilon^{(n)}$ is a Gaussian process with mean zero and covariance operator $\widehat\Gamma_{\varepsilon,n}$. Define the estimator of $P(Y\in A|X)$ as
\begin{align}\label{eq:gprob}
\hat{P}^\text{G}_{n}(Y\in A|X) =P(\hat\varrho_n(X)+\varepsilon^{(n)}\in A|X).
\end{align}

The right hand side above can be approximated by Monte-Carlo simulation. To do so, generate an i.i.d.\ sample  conditionally on $\widehat\Gamma_{\varepsilon,n}$,  $(\varepsilon_k^{(n)})_{k\geq 1}$, distributed as $\varepsilon^{(n)}$, by simulating independent standard Gaussian sequences $\{Z_{i,k}, \; i\ge 1\}_{k\ge1}$  and setting
\[
\varepsilon^{(n)}_k = \sum_{j=1}^{\infty}\hat{\nu}_j^{1/2}  Z_{j,k} \hat{\psi}_j.
\]
The right hand side of \eqref{eq:gprob} can be estimated, for a large $M$, by
\[
\frac{1}{M}\sum_{k=1}^M \mathds{1}\{\hat\varrho_n(X)+\varepsilon_k^{(n)}\in A\}.
\]

\end{enumerate}

\begin{Remark} {\rm  The scaling $1/n$ in the definition of $\widehat\Gamma_{\varepsilon,n}$
does not take into account the degrees of freedom $T_n$ lost in the estimation of the regression operator $\varrho$. It has thus been advocated, for example in \cite{crambes:2016},  to instead divide by $n - T_n$, where $T_n$ is related to the dimension of the dimensionality reduction technique used in estimating $\hat{\varrho}_n$. If $ET_n = o(n)$, as is the case for most estimation approaches,
the resulting scaling difference is asymptotically negligible.
Some authors also propose splitting the sample
and estimating the regression operator and the noise covariance operator
on separate parts of the sample in order to reduce the bias of the estimator $\widehat\Gamma_{\varepsilon,n}$; see \cite{crambes:mas:2013}. }
\end{Remark}

\begin{Remark} {\rm
Algorithm {\bf Gauss} can be extended to other parametric distributions of the noise.
A notable example for this are infinite dimensional elliptic distributions, where
$\varepsilon_k = \Xi_k   \varepsilon^\prime_k$
with two independent random variables
$\varepsilon^\prime_k \in H_2$, which is Gaussian, and $\Xi_k \geq 0$ from a known univariate parametric distribution. The following investigation can easily be adapted to this setting. For details on elliptical distributions of functional data, we refer to \cite{boente:2014}. }
\end{Remark}

We now aim to establish consistency results for these algorithms. In order to keep the results as general as possible and allow for different estimators of $\varrho$, these results are stated in terms of the following consistency properties of $\hat{\varrho}_n$.

\begin{Assumption}\label{a:consist}
The estimator $\hat\varrho_n$ is such that
\begin{enumerate}[(a),topsep=0pt,itemsep=0pt]
\item \label{a:consistOut}
its out-of-sample prediction is consistent,
i.e.\ if $X \stackrel{d}{=} X_1$, and is independent from the sample, then
\[
\|\hat\varrho_n(X)-\varrho(X)\| \convP 0,
\quad  n \to \infty.
\]
\item \label{a:consistIn}
its in-sample prediction is consistent,
i.e.\ let $K_n$ be independent from the sample and
uniformly distributed on $\{1,\dots,n\}$, then
\[
\|\hat\varrho_n(X_{K_n})-\varrho(X_{K_n})\| \convP 0,
\quad  n \to \infty.
\]
\end{enumerate}
\end{Assumption}

In order to establish the consistency of the algorithm {\bf Gauss}, we additionally need conditions on the estimator of the covariance
operator of the model errors defined by \eqref{e:gammahat}. We state two conditions depending on whether $H_2$ is a separable Hilbert space, or $H_2 = C[0,1]$.

\begin{Assumption}\label{a:consist2}
 $H_2$ is a separable Hilbert space.
The estimator $\hat\varrho_n$ is such that
\[
E \Bigg\| \frac 1n \sum_{i=1}^n (\hat\varrho_n - \varrho) X_k \otimes X_k (\hat\varrho_n - \varrho)^* \Bigg\|_1 \to 0, \mbox{ as }
\quad n\to\infty.
\]

\end{Assumption}

\begin{Assumption}\label{a:consist2C}
~
\begin{enumerate}[(a),topsep=0pt,itemsep=0pt]
\item $H_2 = C[0,1]$. The estimator $\hat\varrho_n$ is such that
\[
\sup_{t,s\in[0,1]} \Bigg| \frac 1n \sum_{i=1}^n
(\hat \varrho_n - \varrho)(X_k)\otimes (\hat \varrho_n - \varrho)(X_k)(t,s)
\Bigg| \convP 0,
\quad n \to \infty.
\]
\item The estimated variance of the model errors,
\[
V_n^2(t,s) = \var\big( \varepsilon^{(n)}(t) - \varepsilon^{(n)}(s) \big| \widehat\Gamma_{\varepsilon,n} \big)
 = \widehat\Gamma_{\varepsilon,n}(t,t) - 2 \widehat\Gamma_{\varepsilon,n}(t,s) + \widehat\Gamma_{\varepsilon,n}(s,s)
\]
satisfies the H\"{o}lder condition
\[
V_n^2(t,s) < M_V^2 \, |t-s|^{2\alpha}, \quad    t,s\in[0,1],
\]
for some $0<\alpha\leq 1$,
where $M_V$ is a positive random variable with
$E M_V < \infty$.
\end{enumerate}
\end{Assumption}

Assumption~\ref{a:consist2C} is a $C[0,1]$ analog of Assumption~\ref{a:consist2}, but with the addition of Assumption~\ref{a:consist2C}~(b) that implicitly demands a degree of  continuity of $\varrho(X_k)$ and the model errors $\varepsilon_k$. Under the above assumptions, we can now formulate our main consistency results.

\begin{Theorem}\label{t:main1}
Suppose that Assumption~\ref{a:consist} holds
and that $P(Y\in \partial A)=0$.
Then
$\hat P^\text{B}_n(Y\in A|X)\convP P(Y\in A|X)$ as $n\to\infty$.
\end{Theorem}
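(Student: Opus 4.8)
The plan is to show that conditionally on $X$, the bootstrap estimator $\hat P^{\text{B}}_n(Y\in A|X)$ behaves like the empirical measure of the random variables $\varrho(X) + \varepsilon_k$ evaluated at the set $A$, and then to invoke the continuous mapping / portmanteau theorem together with a law of large numbers for the stationary sequence. Fix a generic $X$ (independent of the sample) and work conditionally on $X$. Decompose
\[
\hat P^{\text{B}}_n(Y\in A|X) = \frac 1n \sum_{k=1}^n \mathds{1}\{\hat\varrho_n(X) + \hat\varepsilon_{k,n} \in A\},
\]
and compare it to the ``oracle'' quantity $\tilde P_n := \frac 1n \sum_{k=1}^n \mathds{1}\{\varrho(X) + \varepsilon_k \in A\}$. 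By strict stationarity and ergodicity of $(Y_k,X_k)$ — inherited from the assumption that the process is stationary, and using Assumption~\ref{a:errorind} so that $\varepsilon_k$ is independent of $X$ — the ergodic theorem gives $\tilde P_n \convP E[\mathds{1}\{\varrho(X)+\varepsilon_1 \in A\}\mid X] = P(Y\in A\mid X)$. (If one does not wish to assume ergodicity, one can instead argue $L^1$-convergence of $\tilde P_n$ to its conditional expectation directly, but some mixing or ergodicity input is needed here; I expect the paper to lean on stationarity plus the independence structure, perhaps implicitly assuming ergodicity of the innovations.)

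The bulk of the work is controlling the difference $\hat P^{\text{B}}_n(Y\in A|X) - \tilde P_n$. Write $\hat\varrho_n(X) + \hat\varepsilon_{k,n} = \varrho(X) + \varepsilon_k + \delta_{n} + \eta_{k,n}$, where $\delta_n := \hat\varrho_n(X) - \varrho(X)$ is the out-of-sample error and $\eta_{k,n} := -(\hat\varrho_n - \varrho)(X_k)$ is (the negative of) the in-sample error at index $k$. Assumption~\ref{a:consist}~(a) gives $\|\delta_n\|\convP 0$, and Assumption~\ref{a:consist}~(b) gives $\|\eta_{K_n,n}\|\convP 0$ for a uniformly random index $K_n$, which is equivalent to $\frac 1n\sum_{k=1}^n \min(\|\eta_{k,n}\|,1) \convP 0$, i.e.\ a vanishing fraction of the residual perturbations are non-negligible. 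The strategy is then a ``sandwiching'' argument: for any $\epsilon>0$, enlarge and shrink $A$ by $\epsilon$ in the norm of $H_2$, writing $A^{\epsilon} = \{y: \operatorname{dist}(y,A)<\epsilon\}$ and $A_{-\epsilon} = \{y : \operatorname{dist}(y,A^c)\ge\epsilon\}$. On the event $\{\|\delta_n\|<\epsilon/2\}$, for each $k$ with $\|\eta_{k,n}\|<\epsilon/2$ we have $\mathds{1}\{\varrho(X)+\varepsilon_k\in A_{-\epsilon}\} \le \mathds{1}\{\hat\varrho_n(X)+\hat\varepsilon_{k,n}\in A\} \le \mathds{1}\{\varrho(X)+\varepsilon_k\in A^\epsilon\}$, and for the at-most-$o(n)$ indices $k$ with $\|\eta_{k,n}\|\ge\epsilon/2$ we bound the indicator trivially by $0$ and $1$. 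Summing and dividing by $n$ yields, with probability tending to one,
\[
\frac 1n\sum_{k=1}^n \mathds{1}\{\varrho(X)+\varepsilon_k\in A_{-\epsilon}\} - o_P(1)
\;\le\; \hat P^{\text{B}}_n(Y\in A|X)
\;\le\; \frac 1n\sum_{k=1}^n \mathds{1}\{\varrho(X)+\varepsilon_k\in A^\epsilon\} + o_P(1).
\]
By the ergodic theorem again, the left and right averages converge in probability to $P(\varrho(X)+\varepsilon_1 \in A_{-\epsilon}\mid X)$ and $P(\varrho(X)+\varepsilon_1\in A^\epsilon\mid X)$ respectively.

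Finally, let $\epsilon\downarrow 0$. Since $A^\epsilon \downarrow \bar A$ and $A_{-\epsilon}\uparrow \operatorname{int}(A)$ as $\epsilon\downarrow 0$, continuity of the measure gives $P(\varrho(X)+\varepsilon_1\in A^\epsilon\mid X)\to P(Y\in\bar A\mid X)$ and $P(\varrho(X)+\varepsilon_1\in A_{-\epsilon}\mid X)\to P(Y\in\operatorname{int}(A)\mid X)$, and the hypothesis $P(Y\in\partial A)=0$ forces both limits to equal $P(Y\in A\mid X)$ (for $P_X$-almost every $X$; note $P(Y\in\partial A)=\int P(Y\in\partial A\mid X)\,dP_X = 0$ implies the conditional boundary probability vanishes a.s.). A standard $\epsilon$-$\delta$ diagonal argument then upgrades the sandwiched convergence for each fixed $\epsilon$ to $\hat P^{\text{B}}_n(Y\in A|X)\convP P(Y\in A|X)$. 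The main obstacle is the bookkeeping in the sandwiching step — reconciling the pointwise-in-$X$ conditional statement with the in-sample consistency condition, which is naturally an unconditional (averaged) statement — and making sure the $o_P(1)$ terms coming from the bad indices and from $\|\delta_n\|$ are handled uniformly as $\epsilon$ is sent to zero; a careful ordering of limits ($n\to\infty$ first, then $\epsilon\downarrow 0$) resolves this.
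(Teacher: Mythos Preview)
Your argument is correct and follows the same high-level architecture as the paper: reduce to the ``oracle'' average $\tilde P_n = \frac{1}{n}\sum_k \mathds{1}\{\varrho(X)+\varepsilon_k\in A\}$, show $\tilde P_n \to P(Y\in A|X)$ via an ergodic-type argument, and then control $\hat P^{\text{B}}_n - \tilde P_n$ using the two consistency assumptions together with $P(Y\in\partial A)=0$. Your translation of Assumption~\ref{a:consist}(b) into ``the fraction of indices with $\|\eta_{k,n}\|\ge\epsilon/2$ is $o_P(1)$'' is exactly right, and your $\epsilon$-sandwich with $A_{-\epsilon}$ and $A^\epsilon$ followed by $\epsilon\downarrow 0$ goes through.

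Where the paper differs is in packaging. Rather than doing the sandwiching by hand, it introduces a random index $K=K_n$ uniform on $\{1,\ldots,n\}$ and independent of everything, sets $V_{K,n}=\hat\varrho_n(X)+\hat\varepsilon_{K,n}$ and $V_{K,0}=\varrho(X)+\varepsilon_K$, and observes that Assumption~\ref{a:consist} gives $V_{K,n}-V_{K,0}\convP 0$ directly, with $V_{K,0}\stackrel{d}{=}Y$. A one-line modified continuous mapping lemma (if $X_n-Y_n\convP 0$, $Y_n\stackrel{d}{=}Y$, and $P(Y\in D_g)=0$, then $g(X_n)-g(Y_n)\convP 0$) applied to $g=\mathds{1}_A$ yields $D_n:=\mathds{1}\{V_{K,n}\in A\}-\mathds{1}\{V_{K,0}\in A\}\convP 0$; boundedness upgrades this to $E|D_n|\to 0$; and unfolding the expectation over $K$ gives $\frac{1}{n}\sum_k E|\mathds{1}\{V_{k,n}\in A\}-\mathds{1}\{V_{k,0}\in A\}|\to 0$, which is \eqref{e:toshow}. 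This random-index reduction is the paper's main device: it absorbs your explicit $\epsilon$-enlargement, bad-index counting, and diagonal limit into a single application of the continuous mapping theorem, and it explains why Assumption~\ref{a:consist}(b) is phrased in terms of a uniform random index in the first place. Your approach has the advantage of being self-contained (no auxiliary CMT variant needed), at the cost of the bookkeeping you flag at the end. For the oracle step, the paper's Lemma~\ref{l:1} uses only that the $\varepsilon_k$ are i.i.d.\ (identifying the invariant $\sigma$-algebra of the shift on $(X,\varepsilon_1,\varepsilon_2,\ldots)$ as $\sigma(X,\mathcal{T})$ and invoking Kolmogorov's zero--one law), so you do not need to assume ergodicity of the full process $(Y_k,X_k)$.
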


\begin{Theorem}\label{t:main2}
Suppose that Assumption~\ref{a:consist}~\ref{a:consistOut} holds and
either Assumption~\ref{a:consist2} or Assumption~\ref{a:consist2C} holds.
Assume that
$(\varepsilon_k)_{k\geq 1}$ are i.i.d.\ Gaussian random variables in $H_2$,
and that $P(Y\in \partial A)=0$.
Then
$\hat P^\text{G}_n(Y\in A|X)\convP P(Y\in A|X)$ as $n\to\infty$.
\end{Theorem}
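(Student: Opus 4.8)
The plan is to observe that, conditionally on the observed sample $(Y_1,X_1),\dots,(Y_n,X_n)$, the quantity $\hat P^\text{G}_n(Y\in A|X)$ equals the Gaussian measure $N(\hat\varrho_n(X),\widehat\Gamma_{\varepsilon,n})$ evaluated at $A$, whereas under the Gaussian error assumption $P(Y\in A|X)$ equals the Gaussian measure $N(\varrho(X),\Gamma)$ evaluated at $A$. The goal is thus to show that $N(\hat\varrho_n(X),\widehat\Gamma_{\varepsilon,n})$ converges weakly to $N(\varrho(X),\Gamma)$ on $H_2$, in probability, and then to invoke the Portmanteau theorem: from $P(Y\in\partial A)=0$ and Fubini one gets $N(\varrho(X),\Gamma)(\partial A)=0$ almost surely, so $\partial A$ is a continuity set of the limiting law. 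Since all the convergences below hold only in probability, the final limit is taken through the subsequence principle --- along an arbitrary subsequence one passes to a further subsequence on which the relevant random quantities converge almost surely, applies the weak-convergence and Portmanteau arguments realisation by realisation, and concludes that the original sequence converges in probability.

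The first step is to analyse $\widehat\Gamma_{\varepsilon,n}$. Using $\hat\varepsilon_{k,n}=Y_k-\hat\varrho_n(X_k)=\varepsilon_k-(\hat\varrho_n-\varrho)(X_k)$ and expanding \eqref{e:gammahat},
\begin{align*}
\widehat\Gamma_{\varepsilon,n}
&=\frac1n\sum_{k=1}^n\varepsilon_k\otimes\varepsilon_k
-\frac1n\sum_{k=1}^n\big[\varepsilon_k\otimes(\hat\varrho_n-\varrho)(X_k)+(\hat\varrho_n-\varrho)(X_k)\otimes\varepsilon_k\big]\\
&\qquad+\frac1n\sum_{k=1}^n(\hat\varrho_n-\varrho)(X_k)\otimes(\hat\varrho_n-\varrho)(X_k)+R_n,
\end{align*}
where $R_n$ gathers the recentering terms built from $\bar\varepsilon_{\cdot,n}$. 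The first term converges to $\Gamma$ by the strong law of large numbers for the i.i.d.\ Gaussian sequence $(\varepsilon_k)$ --- in trace norm when $H_2$ is a separable Hilbert space (where $E\|\varepsilon_1\|^2=\mathrm{tr}\,\Gamma<\infty$), and pointwise on $[0,1]^2$ when $H_2=C[0,1]$ (where $E\|\varepsilon_1\|_\infty^2<\infty$ by Fernique's theorem). The third term is precisely $\frac1n\sum_k(\hat\varrho_n-\varrho)(X_k\otimes X_k)(\hat\varrho_n-\varrho)^*$, which tends to $0$ in trace norm under Assumption~\ref{a:consist2}, respectively uniformly on $[0,1]^2$ under Assumption~\ref{a:consist2C}(a). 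The cross terms are dominated, by the Cauchy--Schwarz inequality applied to the rank-one summands, by $\big(\frac1n\sum_k\|\varepsilon_k\|^2\big)^{1/2}\big(\frac1n\sum_k\|(\hat\varrho_n-\varrho)(X_k)\|^2\big)^{1/2}$, whose first factor is $O_P(1)$ and whose second factor is the square root of (the trace of) the third term, hence $o_P(1)$. Finally $\bar\varepsilon_{\cdot,n}=\frac1n\sum_k\varepsilon_k-\frac1n\sum_k(\hat\varrho_n-\varrho)(X_k)\to0$ by the same arguments, so $R_n$ is negligible. Consequently $\widehat\Gamma_{\varepsilon,n}\convP\Gamma$ (in trace norm, resp.\ pointwise), and combined with Assumption~\ref{a:consist}~\ref{a:consistOut} we have $\|\hat\varrho_n(X)-\varrho(X)\|\convP0$.

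The second step turns convergence of the Gaussian parameters into weak convergence of the Gaussian laws. When $H_2$ is a separable Hilbert space it suffices that the means converge in norm and the covariance operators converge in trace norm, which is the classical criterion for weak convergence of Gaussian measures. When $H_2=C[0,1]$ one verifies, first, convergence of finite-dimensional distributions, which holds because $\widehat\Gamma_{\varepsilon,n}(t,s)\to\Gamma(t,s)$ for every $(t,s)$ and $\hat\varrho_n(X)(t)\to\varrho(X)(t)$ for every $t$ (as $\|\hat\varrho_n(X)-\varrho(X)\|_\infty\to0$); and, second, tightness in $C[0,1]$. For the latter, conditionally on the sample $\varepsilon^{(n)}$ is Gaussian, so
\[
E\big[\,|\varepsilon^{(n)}(t)-\varepsilon^{(n)}(s)|^{p}\,\big|\,\widehat\Gamma_{\varepsilon,n}\,\big]
=c_p\,\big(V_n^2(t,s)\big)^{p/2}
\le c_p\,M_V^{\,p}\,|t-s|^{\alpha p},
\]
by Assumption~\ref{a:consist2C}(b); choosing $p$ with $\alpha p>1$ and applying the Kolmogorov--Chentsov tightness criterion gives the required uniform-in-$n$ modulus of continuity, with the randomness of $M_V$ absorbed by restricting to the event $\{M_V\le R\}$ and letting $R\to\infty$, using $EM_V<\infty$. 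Putting the pieces together: along a subsequence on which $\hat\varrho_n(X)\to\varrho(X)$ and $\widehat\Gamma_{\varepsilon,n}\to\Gamma$ almost surely, and on the probability-one event on which $N(\varrho(X),\Gamma)(\partial A)=0$, the measures $N(\hat\varrho_n(X),\widehat\Gamma_{\varepsilon,n})$ converge weakly to $N(\varrho(X),\Gamma)$ and $\partial A$ is a continuity set of the limit, so $\hat P^\text{G}_n(Y\in A|X)\to P(Y\in A|X)$ along the subsequence; the subsequence principle yields convergence in probability. I expect the main obstacle to be the tightness of $(\varepsilon^{(n)})$ in $C[0,1]$ uniformly in $n$ while the covariance $\widehat\Gamma_{\varepsilon,n}$ is itself random --- this is exactly what Assumption~\ref{a:consist2C}(b) is built to supply --- together with the bookkeeping needed to pass from the in-probability statements to almost-sure ones along subsequences before the deterministic weak-convergence arguments can be applied.
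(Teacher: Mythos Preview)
Your proposal is correct and follows the same high-level strategy as the paper: establish that $\widehat\Gamma_{\varepsilon,n}\convP\Gamma$, upgrade this to weak convergence of the relevant Gaussian laws, and conclude via the Portmanteau theorem on the continuity set $A$. The organisation differs somewhat. The paper first proves the \emph{unconditional} weak convergence $\varepsilon^{(n)}\convd\varepsilon$ in $H_2$ (Lemmata~\ref{l:noiseconvD} and~\ref{l:noiseconvDcont}) and then packages the conditioning on $X$ together with the mean shift $\hat\varrho_n(X)$ into an abstract conditional-Slutsky result (Lemma~\ref{l:conditionalslutsky}); you instead merge the mean directly into the Gaussian measure $N(\hat\varrho_n(X),\widehat\Gamma_{\varepsilon,n})$ and handle all the conditioning in one stroke via the subsequence principle. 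For tightness in $C[0,1]$ the paper uses Dudley's entropy integral whereas you invoke Kolmogorov--Chentsov, and in the Hilbert case the paper verifies a Suquet-type tightness criterion explicitly while you cite the trace-norm criterion for weak convergence of Gaussian measures; these are equivalent tools here. The paper's modular route has the advantage that Lemma~\ref{l:conditionalslutsky} is reusable (it also yields Corollary~\ref{cor-predsets}), while your subsequence argument is more self-contained. One cosmetic remark: once you work realisation-by-realisation along the almost-sure subsequence, $M_V(\omega)$ is a fixed finite constant and Kolmogorov--Chentsov applies directly with moment bound $c_pM_V(\omega)^p|t-s|^{\alpha p}$, so the truncation to $\{M_V\le R\}$ is not needed.
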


Theorems~\ref{t:main1} and \ref{t:main2} show that consistent estimation of $P(Y\in A |X)$ can be achieved by both {\bf boot} and {\bf Gauss} when $Y$ takes values in either a separable Hilbert space, or $C[0,1]$, under natural consistency conditions on $\hat{\varrho}_n$, and when $A$ is a continuity set of the response $Y$. We note that these results can be readily extended to sets $A$ that, rather than being fixed, are dependent on the predictor $X$, as well as using the estimator $\hat{\varrho}_n$, so long as there is a certain degree of continuity in relating $\{Y\in A\}$ to $\hat{\varrho}_n(X)$. This is often of interest when constructing prediction sets for the response $Y$, as in the following examples in which it is natural to consider $H_2 = C[0,1]$.

\begin{Example}[Pointwise and uniform prediction sets]\label{example:pred} Suppose $a$ and $b$ are positive functions in $C[0,1]$. Given a covariate $X$, let, for $s\in [0,1]$,
\[
\hat{A}_{a,b}^{(n)}(s)= \big\{y\in C[0,1] \colon \hat{\varrho}_n(X)(s)- a(s) \le y(s) \le \hat{\varrho}_n(X)(s)+ b(s)  \big\} \;\; \text{(Point prediction sets)},
\]
and
\[
\hat{U}_{a,b}^{(n)}= \big\{y\in C[0,1] \colon \lambda(t : \hat{\varrho}_n(X)(t)- a(t) \le y(t) \le \hat{\varrho}_n(X)(t)+ b(t) )=1 \big\} \;\; \text{(Uniform prediction sets)}.
\]
These approximate the sets
\[
 A_{a,b}(s)= \big\{y\in C[0,1] \colon  \varrho(X)(s)- a(s) \le y(s) \le  \varrho(X)(s)+ b(s)  \big\},
\]
and
\[
U_{a,b}= \big\{y\in C[0,1] \colon \lambda(t : \varrho(X)(t)- a(t) \le y(t) \le  \varrho(X)(t)+ b(t) )=1 \big\} .
\]
\end{Example}

\begin{Corollary}\label{cor-predsets}
For some $s\in [0,1]$, let $\hat{A}_{a,b}^{(n)}(s)$, $\hat{U}_{a,b}^{(n)}$, $A_{a,b}(s)$, and $U_{a,b}$ be defined in Example~\ref{example:pred}. Suppose that  $P(Y \in \partial A_{a,b}(s))=0$. %
If Assumption~\ref{a:consist} holds, then
\begin{equation}
\label{e:randomsets}
\hat P^\text{B}_n(Y\in \hat{A}_{a,b}^{(n)}(s)|X)\convP P(Y\in A_{a,b}(s)|X),\quad\text{as $n\to\infty$.}
\end{equation}
If Assumptions~\ref{a:consist}~\ref{a:consistOut} and \ref{a:consist2C} hold, then \eqref{e:randomsets} holds with $\hat P^\text{G}_n$ instead of $P^\text{B}_n$. Under $P(Y \in \partial U_{a,b})=0$, the analogue results hold with the sets $\hat{U}_{a,b}^{(n)}$ and $U_{a,b}$.
\end{Corollary}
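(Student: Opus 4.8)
The plan is to exploit the special structure of the sets in Example~\ref{example:pred}: the estimator $\hat\varrho_n(X)$ appearing in the resampled/simulated predictions $\hat\varrho_n(X)+\hat\varepsilon_{k,n}$ and $\hat\varrho_n(X)+\varepsilon^{(n)}$ cancels against the $\hat\varrho_n(X)$ used to define $\hat A^{(n)}_{a,b}(s)$ and $\hat U^{(n)}_{a,b}$, which reduces everything to \emph{fixed} sets. Write $A'(s)=\{y\in C[0,1]\colon -a(s)\le y(s)\le b(s)\}$ and $U'=\{y\in C[0,1]\colon -a(t)\le y(t)\le b(t)\ \text{for all}\ t\in[0,1]\}$. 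Since $a,b,\varrho(X)$ are continuous and a closed subset of $[0,1]$ of full Lebesgue measure equals $[0,1]$, the constraint ``$\lambda(\cdots)=1$'' in the definitions of $\hat U^{(n)}_{a,b}$ and $U_{a,b}$ is equivalent to the pointwise inequalities holding for \emph{all} $t$. A direct computation then gives, for every $z\in C[0,1]$,
\[
\hat\varrho_n(X)+z\in\hat A^{(n)}_{a,b}(s)\iff z\in A'(s),\qquad \hat\varrho_n(X)+z\in\hat U^{(n)}_{a,b}\iff z\in U',
\]
so that $\hat P^\text{B}_n(Y\in\hat A^{(n)}_{a,b}(s)\mid X)=\frac1n\sum_{k=1}^n\mathds 1\{\hat\varepsilon_{k,n}\in A'(s)\}$ and $\hat P^\text{G}_n(Y\in\hat A^{(n)}_{a,b}(s)\mid X)=P(\varepsilon^{(n)}\in A'(s)\mid\widehat\Gamma_{\varepsilon,n})$, with $U'$ in place of $A'(s)$ for the uniform sets. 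In particular the objects to be controlled no longer involve $X$ or the unknown $\varrho$.

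Next I would identify the limits and translate the continuity-set hypotheses. Since $Y=\varrho(X)+\varepsilon$ with $\varepsilon\stackrel{d}{=}\varepsilon_1$ independent of $X$, and since, given $X$, the translation $y\mapsto y-\varrho(X)$ is an isometry of $C[0,1]$, one has $\{Y\in A_{a,b}(s)\}=\{\varepsilon\in A'(s)\}$, $\{Y\in U_{a,b}\}=\{\varepsilon\in U'\}$, $\partial A_{a,b}(s)=\varrho(X)+\partial A'(s)$ and $\partial U_{a,b}=\varrho(X)+\partial U'$. Hence $P(Y\in A_{a,b}(s)\mid X)=P(\varepsilon_1\in A'(s))$, $P(Y\in U_{a,b}\mid X)=P(\varepsilon_1\in U')$, and the hypotheses $P(Y\in\partial A_{a,b}(s))=0$, $P(Y\in\partial U_{a,b})=0$ say precisely that $A'(s)$ and $U'$ are continuity sets of the law of $\varepsilon_1$. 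Using positivity of $a,b$ and compactness of $[0,1]$ one also records that $\partial A'(s)=\{y\colon y(s)\in\{-a(s),b(s)\}\}$ and that the interior of $U'$ equals $\{y\colon -a(t)<y(t)<b(t)\ \forall t\}$.

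It then remains to prove $\frac1n\sum_{k=1}^n\mathds 1\{\hat\varepsilon_{k,n}\in A'(s)\}\convP P(\varepsilon_1\in A'(s))$ (and likewise for $U'$), and, in the Gaussian case, $P(\varepsilon^{(n)}\in A'(s)\mid\widehat\Gamma_{\varepsilon,n})\convP P(\varepsilon_1\in A'(s))$ (and likewise for $U'$); these are exactly the conclusions obtained for fixed continuity sets within the proofs of Theorems~\ref{t:main1} and \ref{t:main2}. For the bootstrap statement I would, as there, write $\hat\varepsilon_{k,n}=\varepsilon_k-\eta_{k,n}$ with $\eta_{k,n}:=\hat\varrho_n(X_k)-\varrho(X_k)$, note that Assumption~\ref{a:consist}~\ref{a:consistIn} gives $\frac1n\sum_{k=1}^n\mathds 1\{\|\eta_{k,n}\|_\infty>\epsilon\}\convP 0$ for every $\epsilon>0$, and sandwich $\mathds 1\{\varepsilon_k-\eta_{k,n}\in A'(s)\}$ between the indicators of the $\epsilon$-core and of the closed $\epsilon$-neighbourhood of $A'(s)$ in $\|\cdot\|_\infty$, up to the event $\{\|\eta_{k,n}\|_\infty>\epsilon\}$; averaging, letting $n\to\infty$ via the law of large numbers for the stationary sequence $(\varepsilon_k)$, and then $\epsilon\downarrow0$, the continuity-set property closes the gap. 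For the Gaussian statement I would invoke the fact established inside the proof of Theorem~\ref{t:main2} under Assumption~\ref{a:consist2C}(b) — convergence of finite-dimensional distributions from $\widehat\Gamma_{\varepsilon,n}\to\Gamma$ together with tightness in $C[0,1]$ from the Hölder bound on $V_n^2$ — namely that the conditional law of $\varepsilon^{(n)}$ given the sample converges weakly in $C[0,1]$, in probability, to the law of $\varepsilon_1$; the Portmanteau theorem applied to the continuity sets $A'(s)$ and $U'$ then yields the claim.

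The main obstacle is the bookkeeping in the second step: the sets $A_{a,b}(s)$ and $U_{a,b}$ are themselves functions of $X$, so ``$P(Y\in\partial A_{a,b}(s))=0$'' is a statement about the joint law of $(X,\varepsilon)$, and a naive appeal to Theorems~\ref{t:main1}--\ref{t:main2}, which concern deterministic $A$, is not available. The isometry identities above are exactly what allow the hypothesis to be transferred to the fixed sets $A'(s),U'$ and the target probabilities to be identified; once this is done, the remaining convergences are a routine rerun of the arguments in the proofs of those two theorems.
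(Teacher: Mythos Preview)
Your proposal is correct and rests on the same key observation the paper exploits: the $\hat\varrho_n(X)$ appearing in the simulated/resampled responses cancels against the $\hat\varrho_n(X)$ in the definition of $\hat A^{(n)}_{a,b}(s)$ and $\hat U^{(n)}_{a,b}$, reducing the problem to fixed continuity sets $A'(s),U'$ of the law of $\varepsilon_1$. The paper's proof is a single sentence pointing to Theorems~\ref{t:main1}, \ref{t:main2} and Lemma~\ref{l:conditionalslutsky}; you have effectively unpacked what that sentence means.

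There is one minor divergence worth noting. For the bootstrap part you run a direct $\epsilon$-core/$\epsilon$-enlargement sandwich on the indicators, whereas the paper's machinery (via the proof of Theorem~\ref{t:main1}) would instead introduce $K_n$ uniform on $\{1,\dots,n\}$, invoke the modified continuous mapping theorem (Lemma~\ref{e:CMTmod}) for $\mathds 1\{\hat\varepsilon_{K_n,n}\in A'(s)\}-\mathds 1\{\varepsilon_{K_n}\in A'(s)\}$, and then pass to the average in $L^1$. Both devices work here and are interchangeable; your sandwich is a little more hands-on, the paper's route is a little more modular. For the Gaussian part your appeal to weak convergence of $\varepsilon^{(n)}$ in $C[0,1]$ plus Portmanteau is precisely the specialization of Lemma~\ref{l:conditionalslutsky} to the present situation (with trivial $\hat f_n$ after cancellation), so there is no real difference there.
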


\subsection{Uniform consistency over monotone families of sets}\label{s:uniform:mono}

For a potentially unbounded  interval $[a,b] \subset \overline{\mathbb{R}}$, we call a family $\mathcal{A}=\{A_\xi\colon \xi\in [a,b]\}$ of measurable subsets of $H_2$ monotone if the sets $A_\xi$ are increasing or decreasing in $\xi$. Suppose that $A_\xi$ is increasing, the decreasing case can be handled similarly, and that we are interested in finding
\[
\xi_p(X)=\inf\{\xi \in [a,b]\colon P(Y \in A_\xi | X ) \geq p\},\quad p\in (0,1).
\]
As an example where this problem is relevant, consider a scalar transformation of the response $Z=T(Y)$, and suppose we wish to estimate the conditional quantile of $Z$ given the covariate $X$
\begin{align*}
q_p(Z|X)&=\inf\{\xi \in [a,b]\colon P(Z\leq \xi | X ) \geq p\}=\inf\{\xi \in [a,b]\colon P\big(Y \in T^{-1}\big([a,\xi]\big) | X \big) \geq p\}.
\end{align*}
The sets $A_\xi:= T^{-1}\big([a,\xi]\big)$ evidently define a monotone family. Consistent scalar-on-function quantile regression can hence be cast as the problem of consistently estimating $\xi_p(X)$ from the sample, which can be done using $\hat{P}_n^\text{B}$ or $\hat{P}_n^\text{G}$. To this end we consider the estimator
\begin{align}
\hat \xi_p^\text{B} (X):= \inf \big\{ \xi \in [a,b]\colon\, \hat P^\text{B}_n(Y \in A_\xi | X ) \geq p \big\}.
\label{e:predictionxi}
\end{align}
We note that based on the definition  of $\hat{P}^\text{B}_n$, $p\mapsto \hat\xi_p^\text{B}(X)$ is a non-decreasing function in $p$. The same holds for $\hat \xi_p^\text{G}(X)$, which is defined using $\hat{P}_n^\text{G}$. While this observation is rather trivial, in  other approaches to scalar-on-function quantile regression one often has to take special
care in order to guarantee monotonicity of estimators of $q_p(Z|X)$, see e.g.\ \cite{kato:2012}.

The goal is now to show that  $\hat\xi_p^\text{B}(X)\convP\xi_p(X)$ and $\hat\xi_p^\text{G}(X)\convP\xi_p(X)$. In order to do so, we need the following
uniform convergence result for the estimated conditional probabilities.
\begin{Proposition}\label{p:uniformconv}
Let $\{ A_\xi\colon \xi \in [a,b] \}$
be a monotone family of sets such that $P(Y \in A_\xi | X)$ is a.s. continuous in $\xi$.
Suppose the estimator $\hat P_n(Y\in A_\xi|X)$ is non-decreasing, right-continuous, and satisfies
$\hat P_n(Y\in A_\xi|X)\convP  P(Y \in A_\xi | X)$ for all $\xi \in [a,b]$.
Then
\[
\sup_{\xi \in [a,b]} \Big| \hat P_n(Y \in A_\xi | X) - P(Y \in A_\xi | X) \Big| \convP 0,
\quad n \to \infty.
\]
\end{Proposition}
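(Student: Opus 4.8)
The plan is to recognise this as a conditional, in-probability analogue of P\'olya's uniform convergence theorem, and to reduce it to a deterministic (pathwise) statement via the subsequence characterisation of convergence in probability. Fix a countable dense subset $D\subseteq[a,b]$, chosen to contain the endpoints whenever they are finite. Because $\xi\mapsto\hat P_n(Y\in A_\xi|X)$ is right-continuous and $\xi\mapsto P(Y\in A_\xi|X)$ is a.s.\ continuous, the quantity $S_n:=\sup_{\xi\in[a,b]}\big|\hat P_n(Y\in A_\xi|X)-P(Y\in A_\xi|X)\big|$ equals $\sup_{\xi\in D}\big|\hat P_n(Y\in A_\xi|X)-P(Y\in A_\xi|X)\big|$ almost surely, and is therefore measurable. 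To prove $S_n\convP 0$ it suffices to show that every subsequence of $(S_n)$ has a further subsequence converging to $0$ almost surely. Given such a subsequence, a diagonal argument over the countable set $D$ and the hypothesis $\hat P_n(Y\in A_\xi|X)\convP P(Y\in A_\xi|X)$ produce a further subsequence $(n_\ell)$ along which, almost surely, $\hat P_{n_\ell}(Y\in A_d|X)\to P(Y\in A_d|X)$ simultaneously for all $d\in D$; on the same almost sure event $F(\cdot):=P(Y\in A_\cdot|X)$ is continuous.

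It then remains to establish the deterministic fact: if $F:[a,b]\to[0,1]$ is non-decreasing (as the $A_\xi$ increase) and continuous, and $g_m:[a,b]\to[0,1]$ are non-decreasing with $g_m(d)\to F(d)$ for every $d\in D$, then $\sup_\xi|g_m(\xi)-F(\xi)|\to 0$. Fix $\varepsilon>0$. A bounded, monotone, continuous function on an interval of $\overline{\mathbb R}$ is uniformly continuous, so one may choose a finite grid $d_0<d_1<\cdots<d_r$ in $D$ with $F(d_i)-F(d_{i-1})<\varepsilon$ for every $i$, while $d_0$ and $d_r$ are finite and $F(d_0)$, $1-F(d_r)$ lie within $\varepsilon$ of $\inf_\xi F(\xi)$, $1-\sup_\xi F(\xi)$. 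For $\xi\in[d_{i-1},d_i]$, monotonicity of $g_m$ and of $F$ gives
\[
g_m(d_{i-1})-F(d_i)\ \le\ g_m(\xi)-F(\xi)\ \le\ g_m(d_i)-F(d_{i-1}),
\]
whence $|g_m(\xi)-F(\xi)|\le\max_{0\le i\le r}|g_m(d_i)-F(d_i)|+\varepsilon$; the ranges $\xi<d_0$ and $\xi>d_r$ are treated by the analogous one-sided estimates, using $0\le g_m,F\le1$ and that $F$ is within $\varepsilon$ of its infimum, respectively supremum, there. Consequently $\sup_\xi|g_m(\xi)-F(\xi)|\le\max_{0\le i\le r}|g_m(d_i)-F(d_i)|+2\varepsilon\to 2\varepsilon$ as $m\to\infty$, and letting $\varepsilon\downarrow0$ finishes the pathwise claim, and with it the proposition.

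The step requiring genuine care is that the limit $F(\xi)=P(Y\in A_\xi|X)$ is itself random, so the grid built above depends on the realisation of $X$; passing first to an almost surely convergent subsequence, and only then invoking uniform continuity of $F$, is exactly what legitimises the construction. A secondary, minor point concerns the ends of $[a,b]$ when this interval is unbounded: the one-sided estimates for $\xi<d_0$ and $\xi>d_r$ require that $\hat P_n(Y\in A_\xi|X)$ and $P(Y\in A_\xi|X)$ have matching limits as $\xi\to a$ and $\xi\to b$. For the monotone families of interest, such as $A_\xi=T^{-1}([a,\xi])$ with $T$ real-valued, both limits are the trivial values $0$ and $1$ for either quantity---for $\hat P^\text{B}_n$ because only finitely many residuals enter the average, and for $\hat P^\text{G}_n$ because the relevant Gaussian probabilities tend to $0$ and $1$---so the endpoint contributions are automatically negligible, and when $a,b$ are finite they belong to $[a,b]$ and are handled by including them in $D$.
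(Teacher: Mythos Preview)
Your argument is correct and follows a genuinely different route from the paper's. The paper works directly in probability throughout: it uses the a.s.\ continuity of $W_\xi := P(Y\in A_\xi\mid X)$ to obtain uniform continuity in probability on a compact sub-interval, builds a single \emph{deterministic} finite grid $a=\xi_0<\cdots<\xi_K=b$ with each increment $|W_{\xi_k}-W_{\xi_{k-1}}|$ small with high probability, and finishes by the usual monotone sandwiching. You instead pass to almost-sure convergence via the subsequence characterisation and a diagonal argument over a countable dense set $D$, and then apply the deterministic P\'olya argument with a grid that is allowed to depend on the realisation of $X$. Your route is arguably cleaner: the randomness of the limit $F$ is absorbed once and for all by the subsequence step, so no ``uniform continuity in probability'' and no high-probability qualifiers need to be tracked through the sandwiching. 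One minor imprecision: your treatment of unbounded endpoints falls back on specific features of $\hat P^\text{B}_n$ and $\hat P^\text{G}_n$ rather than on the proposition's hypotheses. Since $[a,b]\subset\overline{\mathbb R}$ and the pointwise convergence $\hat P_n(Y\in A_\xi\mid X)\convP P(Y\in A_\xi\mid X)$ is assumed for every $\xi\in[a,b]$, including $\xi=a$ and $\xi=b$ even when these are infinite, you may simply put $a,b\in D$ and take $d_0=a$, $d_r=b$ in your grid; then every $\xi$ lies in some $[d_{i-1},d_i]$ and the separate tail estimates become unnecessary.
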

We note that  both $\hat P_n^\text{B}(Y\in A_\xi|X)$ and
$\hat P_n^\text{G}(Y\in A_\xi|X)$ satisfy the conditions of Proposition~\ref{p:uniformconv} under the conditions of Theorems~\ref{t:main1} and \ref{t:main2}.

\begin{Corollary}\label{cor:unif}
Define $\hat\xi_p(X)$ as in  \eqref{e:predictionxi} for a some general estimator $\hat P_n(Y \in A_\xi | X)$.
Under the assumptions of Proposition~\ref{p:uniformconv} with increasing sets $A_\xi$, we
have $P(Y\in A_{\hat\xi_p(X)}|X)\convP p$. If $P(Y \in A_\xi | X)$ is strictly increasing in $\xi$,
then $\hat \xi_p(X) \convP \xi_p(X)$.
\end{Corollary}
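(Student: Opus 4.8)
The plan is to deduce the whole corollary from the uniform convergence of Proposition~\ref{p:uniformconv} via the standard route that turns uniform convergence of (conditional) distribution functions into convergence of the associated quantiles. Throughout I would write $F(\xi)=P(Y\in A_\xi|X)$ and $\hat F_n(\xi)=\hat P_n(Y\in A_\xi|X)$, and set $\delta_n=\sup_{\xi\in[a,b]}|\hat F_n(\xi)-F(\xi)|$, which tends to $0$ in probability by Proposition~\ref{p:uniformconv}. I would first record the elementary facts that, $F$ being a.s.\ continuous and non-decreasing, the set $\{\xi\colon F(\xi)\ge p\}$ is a closed interval $[\xi_p(X),b]$, so that $F(\xi_p(X))\ge p$, with equality a.s.\ whenever $\xi_p(X)>a$. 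I would restrict attention to $p$ in the range where this holds (in the quantile application, where $F$ runs from $0$ to $1$ as $\xi$ ranges over $\overline{\mathbb R}$, this is just $p\in(0,1)$), so that $F(a)<p<F(b)$ a.s.

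For the first assertion I would sandwich $F(\hat\xi_p(X))$. Since $\delta_n\convP 0$ and $F(a)<p<F(b)$ a.s., on an event of probability tending to $1$ the set $\{\xi\colon\hat F_n(\xi)\ge p\}$ is a nonempty closed interval $[\hat\xi_p(X),b]$ with $\hat\xi_p(X)>a$; on this event right-continuity of $\hat F_n$ gives $\hat F_n(\hat\xi_p(X))\ge p$, hence $F(\hat\xi_p(X))\ge p-\delta_n$, while for every $\xi<\hat\xi_p(X)$ one has $\hat F_n(\xi)<p$, so $F(\xi)<p+\delta_n$, and letting $\xi\uparrow\hat\xi_p(X)$ and using continuity of $F$ yields $F(\hat\xi_p(X))\le p+\delta_n$. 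Thus with probability tending to $1$ we have $|F(\hat\xi_p(X))-p|\le\delta_n$, which gives $P(Y\in A_{\hat\xi_p(X)}|X)=F(\hat\xi_p(X))\convP p$.

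For the second assertion, under a.s.\ strict monotonicity $F$ becomes an a.s.\ homeomorphism of $[a,b]$ onto $[F(a),F(b)]$ with continuous inverse, and $\xi_p(X)=F^{-1}(p)$, so one wants to push $F(\hat\xi_p(X))\convP p$ through $F^{-1}$. The step I expect to be the main obstacle is exactly this: the modulus of continuity of $F^{-1}$ is itself random (it depends on $X$), so the continuous mapping theorem does not apply off the shelf. I would handle it by fixing $\eta>0$, observing that on the event that $F$ is strictly increasing the bound $|\hat\xi_p(X)-\xi_p(X)|>\eta$ forces $|F(\hat\xi_p(X))-p|\ge g(X,\eta)$, where $g(X,\eta):=\min\{F((\xi_p(X)+\eta)\wedge b)-p,\;p-F((\xi_p(X)-\eta)\vee a)\}>0$ a.s.\ by strict monotonicity, and then, given $\epsilon>0$, choosing $\gamma>0$ with $P(g(X,\eta)<\gamma)<\epsilon$ and estimating
\[
P\big(|\hat\xi_p(X)-\xi_p(X)|>\eta\big)\le P\big(g(X,\eta)<\gamma\big)+P\big(|F(\hat\xi_p(X))-p|\ge\gamma\big)\le\epsilon+o(1),
\]
using the first assertion. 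Letting $\epsilon\downarrow0$ gives $\hat\xi_p(X)\convP\xi_p(X)$. Apart from this device, everything is routine, the only additional care being bookkeeping at the endpoints of $[a,b]$, which is why I isolate $F(a)<p<F(b)$ at the outset.
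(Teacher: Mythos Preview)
Your proposal is correct and follows essentially the same route as the paper's proof: both use the uniform convergence from Proposition~\ref{p:uniformconv} to sandwich $P(Y\in A_{\hat\xi_p(X)}|X)$ around $p$, and then invoke strict monotonicity together with the device of replacing the random modulus $\delta_X$ (your $g(X,\eta)$) by a deterministic threshold to conclude $\hat\xi_p(X)\convP\xi_p(X)$. Your write-up is in fact more explicit than the paper's on both points---the paper leaves the sandwich step and the passage from random to deterministic threshold somewhat implicit---but the underlying argument is the same.
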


\section{Estimation of the regression operator}\label{s:theoretical}

In this section we aim to define an estimator $\hat{\varrho}_n$ that satisfies the consistency conditions detailed in Assumptions~\ref{a:consist}, \ref{a:consist2}, and \ref{a:consist2C}. In order to do so, we make the following assumptions on model \eqref{mod}.

\begin{Assumption}\label{a:hilbertsetting}
\begin{enumerate}[(a),topsep=0pt,itemsep=0pt]
\item $H_1$ is a separable Hilbert space.
\item The process $(X_k)_{k \in \mathbb{Z}}$ has mean zero, and is $L^4$-$m$-approximable
in  $H_1$ (see \cite{hormann:kokoszka:2010}).
\item The operator $\varrho\colon H_1\to H_2$ is a bounded linear operator.
\item The sequence $(\varepsilon_k)_{k \in \mathbb{Z}} $ is a mean zero, i.i.d.\ sequence in $H_2$, and satisfies $E\|\varepsilon_k\|^4<\infty$.
\end{enumerate}
\end{Assumption}

Assumption~\ref{a:hilbertsetting}~(b) supposes that $X_k$ is a (strongly) stationary and ergodic sequence with $E\|X_k\|_{H_1}^4<\infty$, and allows the $X_k$ to be weakly serially dependent in a certain sense. \cite{hormann:kokoszka:2010} show that many commonly studied stationary time series in function space, like FAR processes or functional analogs of GARCH processes, are $L^4$-$m$-approximable under suitable moment conditions.

The estimator that we consider is a truncated (functional) principal components based estimator. Let the empirical covariance operator of $X_k$, and the empirical cross-covariance operator between $Y_k$ and $X_k$, be denoted as
\[
\widehat C_{XX} = \frac 1 n \sum_{k=1}^n X_k \otimes X_k, \quad\text{and}\quad  \widehat C_{YX} = \frac 1 n \sum_{k=1}^n Y_k \otimes X_k.
\]

Letting $\langle \cdot , \cdot \rangle_{H_1}$ denote the inner product on $H_1$, we note that $\widehat C_{XX}$ defines a non-negative sequence of eigenvalues $\hat \lambda_i$, and eigenfunctions $\hat v_i$, satisfying $\widehat C_{XX}(\hat{v}_i) = \hat{\lambda}_i \hat{v}_i $, $\langle \hat{v}_i , \hat{v}_j \rangle_{H_1} = \mathds{1}\{i=j\}$. We then define
\begin{equation}\label{e:rhohat}
\hat \varrho_n(x) := \sum_{i=1}^{T_n} \frac{1}{\hat \lambda_i} \, \widehat C_{YX} \, \hat v_i \otimes \hat v_i (x),
\end{equation}

The estimator \eqref{e:rhohat} only truncates the covariance operator of $X$ in order to obtain a feasible approximation to $\widehat C_{XX}^{-1}$, yielding a so-called ``single-truncated'' estimator. The asymptotic properties of these estimated operators have e.g.\ been studied in \cite{mas:2007} and \cite{hormann:kidzinski:2015}. In order to select the truncation parameter $T_n$ in such a way that leads to asymptotic consistency of $\hat{\varrho}_n$, we use the following criterion:
\begin{equation}\label{e:chooseTn}
T_n = \max\big\{j \geq 1 \colon \hat \lambda_j \geq m_n^{-1} \big\},
\quad \text{with }\; m_n \to \infty.
\end{equation}
Here $m_n$ is a tuning parameter, tending to infinity at a rate specified in the results below.

We note that another standard way to select $T_n$ is to use the percentage of variance explained (PVE) approach, which entails taking
\[
T_n = \min\left\{ d : \frac{\sum_{j=1}^{d} \hat{\lambda}_j}{\sum_{j=1}^{\infty} \hat{\lambda}_j } \ge v \right\},
\]
where $v$ is a user specified percentage treated as a tuning parameter.  While the criterion in \eqref{e:chooseTn} is more transparent in terms of describing the asymptotic consistency of $\hat{\varrho}_n$, since it gives a direct description of the decay rate of the sequence of eigenvalues $\hat{\lambda}_j$, in applications the PVE criterion is prevailing, due to its ease of interpretation. By choosing the associated tuning parameters appropriately, the two criteria may be made comparable.

Now we present results which imply Assumptions~\ref{a:consist}--\ref{a:consist2C}, and hence the consistency of the estimators in the Algorithms {\bf boot} and {\bf Gauss}.
\begin{Proposition}\label{p:main1}
Suppose that $H_2$ is a separable Hilbert space, Assumption~\ref{a:hilbertsetting} holds and
we define $\hat\varrho_n$ as in \eqref{e:rhohat} with $m_n = o\big(\sqrt n\big)$.
Then Assumption~\ref{a:consist} holds.
\end{Proposition}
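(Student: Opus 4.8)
The plan is to expand $\hat\varrho_n$, split the prediction error into a truncation (``bias'') term and an estimation-noise (``variance'') term, and show that each of these vanishes in probability once the argument is replaced by an out-of-sample copy $X$ of $X_1$ (for Assumption~\ref{a:consist}~\ref{a:consistOut}) or by $X_{K_n}$ (for Assumption~\ref{a:consist}~\ref{a:consistIn}). Write $\hat\Pi_{T_n}=\sum_{i=1}^{T_n}\hat v_i\otimes\hat v_i$ for the orthogonal projection onto the leading $T_n$ empirical eigenspaces of $\widehat C_{XX}$, $\widehat C_{XX}^{\invtn,T_n}=\sum_{i=1}^{T_n}\hat\lambda_i^{-1}\hat v_i\otimes\hat v_i$ for the corresponding truncated pseudo-inverse, and $\widehat C_{\varepsilon X}=\frac1n\sum_{k=1}^n\varepsilon_k\otimes X_k$. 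Substituting $Y_k=\varrho(X_k)+\varepsilon_k$ shows that $\widehat C_{YX}\hat v_i=\hat\lambda_i\varrho\hat v_i+\widehat C_{\varepsilon X}\hat v_i$, and plugging this into \eqref{e:rhohat} gives the operator identity
\[
\hat\varrho_n-\varrho=-\varrho\,(I-\hat\Pi_{T_n})+\widehat C_{\varepsilon X}\,\widehat C_{XX}^{\invtn,T_n},
\]
so that for every $x\in H_1$,
\[
\|\hat\varrho_n(x)-\varrho(x)\|\le\|\varrho\|\,\big\|(I-\hat\Pi_{T_n})x\big\|+\|\widehat C_{\varepsilon X}\|_\cs\,\big\|\widehat C_{XX}^{\invtn,T_n}\big\|\,\|x\|.
\]
It then suffices to show that, for $x\in\{X,\,X_{K_n}\}$, each of the two summands on the right tends to $0$ in probability.

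The variance term is the easier one. We have $\big\|\widehat C_{XX}^{\invtn,T_n}\big\|=\hat\lambda_{T_n}^{-1}\le m_n$ by the choice \eqref{e:chooseTn} of $T_n$. Expanding the Hilbert--Schmidt norm, $E\|\widehat C_{\varepsilon X}\|_\cs^2=n^{-2}\sum_{j,k=1}^nE\big[\langle\varepsilon_j,\varepsilon_k\rangle\langle X_j,X_k\rangle\big]$; by Assumption~\ref{a:errorind} (the errors are centered and independent of past and present covariates) together with the independence of the errors, the off-diagonal terms vanish, leaving $n^{-1}E\big[\|\varepsilon_1\|^2\|X_1\|^2\big]=O(n^{-1})$ by the Cauchy--Schwarz inequality and the fourth-moment bounds in Assumption~\ref{a:hilbertsetting}. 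Hence $\|\widehat C_{\varepsilon X}\|_\cs\,\big\|\widehat C_{XX}^{\invtn,T_n}\big\|=O_P(m_n n^{-1/2})=o_P(1)$ because $m_n=o(\sqrt n)$. Finally $\|x\|=O_P(1)$: for $x=X$ this is immediate since $X$ is independent of the sample and $E\|X\|^2<\infty$; for $x=X_{K_n}$ it follows from $E\big[\|X_{K_n}\|^2\mid\mathcal F_n\big]=\operatorname{tr}\widehat C_{XX}=n^{-1}\sum_{k=1}^n\|X_k\|^2=O_P(1)$, where $\mathcal F_n=\sigma\big((Y_k,X_k)_{k\le n}\big)$. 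This disposes of the variance term in both parts of Assumption~\ref{a:consist}.

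For the bias term the key device is a conditional second-moment identity. Because $I-\hat\Pi_{T_n}$ is an $\mathcal F_n$-measurable self-adjoint projection and $K_n$ is independent of $\mathcal F_n$, conditioning on $\mathcal F_n$ yields in the in-sample case
\[
E\big[\|(I-\hat\Pi_{T_n})X_{K_n}\|^2\mid\mathcal F_n\big]=\frac1n\sum_{k=1}^n\|(I-\hat\Pi_{T_n})X_k\|^2=\operatorname{tr}\big((I-\hat\Pi_{T_n})\widehat C_{XX}\big)=\sum_{j>T_n}\hat\lambda_j,
\]
and in the out-of-sample case, using that $X$ is independent of $\mathcal F_n$ and has covariance operator $C_{XX}=E[X_1\otimes X_1]$,
\[
E\big[\|(I-\hat\Pi_{T_n})X\|^2\mid\mathcal F_n\big]=\operatorname{tr}\big((I-\hat\Pi_{T_n})C_{XX}\big)\le\|\widehat C_{XX}-C_{XX}\|_1+\sum_{j>T_n}\hat\lambda_j.
\]
Both right-hand sides are bounded (by $\operatorname{tr}\widehat C_{XX}$, resp.\ $\operatorname{tr}C_{XX}$) and, as argued next, converge to $0$ in probability; the desired $\|(I-\hat\Pi_{T_n})x\|\convP0$ then follows by a conditional Markov inequality and bounded convergence. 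By Assumption~\ref{a:hilbertsetting}(b) and \cite{hormann:kokoszka:2010}, $\widehat C_{XX}\convP C_{XX}$ in Hilbert--Schmidt norm, which gives $\hat\lambda_j\convP\lambda_j$ for each $j$ via Weyl's inequality, where $\lambda_j$ are the eigenvalues of $C_{XX}$; combined with $\operatorname{tr}\widehat C_{XX}=n^{-1}\sum_k\|X_k\|^2\convP\operatorname{tr}C_{XX}$ (the ergodic theorem) this gives, for each fixed $m$, $\sum_{j>m}\hat\lambda_j=\operatorname{tr}\widehat C_{XX}-\sum_{j\le m}\hat\lambda_j\convP\sum_{j>m}\lambda_j$, while $P(T_n\ge m)=P(\hat\lambda_m\ge m_n^{-1})\to1$ whenever $\lambda_m>0$ (using $m_n\to\infty$). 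Choosing $m$ large so that $\sum_{j>m}\lambda_j$ is small and then letting $n\to\infty$ shows $\sum_{j>T_n}\hat\lambda_j\convP0$. Finally $\|\widehat C_{XX}-C_{XX}\|_1\convP0$, since for non-negative trace-class operators convergence in Hilbert--Schmidt norm together with convergence of the traces implies convergence in trace norm.

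I expect the bias term $\varrho\,(I-\hat\Pi_{T_n})x$ to be the main obstacle: the projection $\hat\Pi_{T_n}$ is data-dependent of data-dependent rank, and in Assumption~\ref{a:consist}~\ref{a:consistIn} the argument $X_{K_n}$ is itself one of the observations used to construct $\hat\Pi_{T_n}$, so a naive argument through the operator norm $\|\hat\varrho_n-\varrho\|$ is unavailable --- indeed, for single-truncation estimators that distance need not tend to $0$ without additional smoothness of $\varrho$ relative to $C_{XX}$. The conditional second-moment identity above is precisely what circumvents this, reducing everything to the scalar quantities $\sum_{j>T_n}\hat\lambda_j$ and $\|\widehat C_{XX}-C_{XX}\|_1$; the remaining ingredients --- the perturbation bound for eigenvalues, the law of large numbers for $n^{-1}\sum_k\|X_k\|^2$, and the trace-norm convergence of non-negative operators --- are routine.
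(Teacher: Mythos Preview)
Your argument is correct, but it differs from the paper's proof, which is simply a citation of \cite{hormann:kidzinski:2015} together with the observation that $X_{K_n}\stackrel{d}{=}X_1$. Inspecting how that reference is unpacked elsewhere in the paper (see \eqref{e:l13}), the bias term there is handled via the triangle inequality $\|\widehat\Pi_{T_n}^\perp X\|\le\|\Pi_{T_n}^\perp X\|+\|(\Pi_{T_n}-\widehat\Pi_{T_n})X\|$, i.e.\ by comparing empirical and population eigenprojections and invoking perturbation bounds. Your route is different and arguably cleaner: you compute the conditional second moment directly, obtaining the exact identity $E[\|\widehat\Pi_{T_n}^\perp X_{K_n}\|^2\mid\mathcal F_n]=\sum_{j>T_n}\hat\lambda_j$ in the in-sample case, which sidesteps eigenprojection perturbation entirely and handles the dependence between $X_{K_n}$ and $\hat\Pi_{T_n}$ transparently. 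The paper's approach buys brevity by outsourcing the work; yours is self-contained.

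Two small points worth tightening. First, the fact you invoke at the end---that for non-negative trace-class operators, Hilbert--Schmidt convergence together with convergence of traces implies trace-norm convergence---is true (it is a form of Gr\"umm's/Simon's convergence theorem for trace ideals) but not entirely elementary, so a reference would help. Second, in the in-sample case your dominating quantity $\sum_{j>T_n}\hat\lambda_j\le\operatorname{tr}\widehat C_{XX}$ is random, so ``bounded convergence'' is really a uniform-integrability argument; this goes through because $E[(\operatorname{tr}\widehat C_{XX})^2]\le E\|X_1\|^4<\infty$ under Assumption~\ref{a:hilbertsetting}, but it would be worth saying so explicitly.
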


\begin{Proposition}\label{p:main2}
Suppose that $H_2$ is a separable Hilbert space,
Assumption~\ref{a:hilbertsetting} holds
and that the true regression operator
$\varrho$ is Hilbert--Schmidt. If $\hat\varrho_n$ is defined as in \eqref{e:rhohat} with $m_n = o\big(\sqrt n\big)$,
then Assumption~\ref{a:consist2} holds.
\end{Proposition}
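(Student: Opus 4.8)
The plan is to reduce the trace-norm statement to a bound on the mean squared in-sample prediction error, and then to split that error into a ``truncation bias'' term controlled deterministically by the eigenvalue cut-off \eqref{e:chooseTn}, and a ``noise'' term whose expectation is of order $m_n/n$.

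First, write $\Delta_n=\hat\varrho_n-\varrho$ and note that, by the defining property of the adjoint, $\Delta_n(X_k\otimes X_k)\Delta_n^{*}=(\Delta_nX_k)\otimes(\Delta_nX_k)$, so that
\[
\frac1n\sum_{k=1}^n\Delta_nX_k\otimes X_k\Delta_n^{*}=\frac1n\sum_{k=1}^n(\Delta_nX_k)\otimes(\Delta_nX_k)
\]
is a non-negative, finite-rank, self-adjoint operator on $H_2$; its trace norm hence equals its trace, which is $\frac1n\sum_{k=1}^n\|\Delta_nX_k\|^2$. Therefore Assumption~\ref{a:consist2} is equivalent to $E\big[\tfrac1n\sum_{k=1}^n\|(\hat\varrho_n-\varrho)X_k\|^2\big]\to0$. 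Introducing $\widehat C_{\varepsilon X}=\tfrac1n\sum_k\varepsilon_k\otimes X_k$, the truncated inverse $\widehat C_{XX}^{\dagger}=\sum_{i\le T_n}\hat\lambda_i^{-1}\hat v_i\otimes\hat v_i$, and the projection $\widehat\Pi_n=\widehat C_{XX}\widehat C_{XX}^{\dagger}=\sum_{i\le T_n}\hat v_i\otimes\hat v_i$, one checks from \eqref{e:rhohat} and $\widehat C_{YX}=\varrho\widehat C_{XX}+\widehat C_{\varepsilon X}$ that $\hat\varrho_n=\widehat C_{YX}\widehat C_{XX}^{\dagger}=\varrho\widehat\Pi_n+\widehat C_{\varepsilon X}\widehat C_{XX}^{\dagger}$, whence $\Delta_n=-\varrho(I-\widehat\Pi_n)+\widehat C_{\varepsilon X}\widehat C_{XX}^{\dagger}$ and
\[
\frac1n\sum_{k=1}^n\|\Delta_nX_k\|^2\le\frac2n\sum_{k=1}^n\|\varrho(I-\widehat\Pi_n)X_k\|^2+\frac2n\sum_{k=1}^n\|\widehat C_{\varepsilon X}\widehat C_{XX}^{\dagger}X_k\|^2=:2T_1+2T_2 .
\]

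For $T_1$, one writes $T_1=\mathrm{tr}\!\big(\varrho^{*}\varrho\,(I-\widehat\Pi_n)\widehat C_{XX}(I-\widehat\Pi_n)\big)$ and observes that $(I-\widehat\Pi_n)\widehat C_{XX}(I-\widehat\Pi_n)=\sum_{i>T_n}\hat\lambda_i\hat v_i\otimes\hat v_i$ has operator norm $\hat\lambda_{T_n+1}<m_n^{-1}$ by \eqref{e:chooseTn}; since $\mathrm{tr}(AB)\le\|B\|_{\mathrm{op}}\mathrm{tr}(A)$ for non-negative $A,B$ and $\mathrm{tr}(\varrho^{*}\varrho)=\|\varrho\|_{\cs}^{2}<\infty$ because $\varrho$ is Hilbert--Schmidt (this is exactly where that hypothesis enters), one obtains the pathwise bound $T_1\le m_n^{-1}\|\varrho\|_{\cs}^{2}\to0$. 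For $T_2$, using $\widehat C_{XX}^{\dagger}\widehat C_{XX}\widehat C_{XX}^{\dagger}=\widehat C_{XX}^{\dagger}$ and Parseval, $T_2=\sum_{i\le T_n}\hat\lambda_i^{-1}\|\widehat C_{\varepsilon X}\hat v_i\|^{2}\le m_n\|\widehat C_{\varepsilon X}\|_{\cs}^{2}$, where we used $\hat\lambda_i^{-1}\le m_n$ for $i\le T_n$, again by \eqref{e:chooseTn}. Now $\|\widehat C_{\varepsilon X}\|_{\cs}^{2}=\tfrac1{n^{2}}\sum_{k,\ell}\langle\varepsilon_k,\varepsilon_\ell\rangle\langle X_k,X_\ell\rangle$; by Assumption~\ref{a:errorind} and the independence of the errors the off-diagonal terms have zero expectation, so $E\|\widehat C_{\varepsilon X}\|_{\cs}^{2}=\tfrac1{n^{2}}\sum_kE\big[\|\varepsilon_k\|^{2}\|X_k\|^{2}\big]\le n^{-1}(E\|\varepsilon_1\|^{4})^{1/2}(E\|X_1\|^{4})^{1/2}=O(n^{-1})$ by stationarity, Cauchy--Schwarz and the $L^{4}$ moment bounds in Assumption~\ref{a:hilbertsetting}. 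Hence $E[T_2]=O(m_n/n)\to0$ since $m_n=o(\sqrt n)$, and together with the bound on $T_1$ this gives $E\big[\tfrac1n\sum_k\|\Delta_nX_k\|^{2}\big]\le 2m_n^{-1}\|\varrho\|_{\cs}^{2}+O(m_n/n)\to0$, as required.

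The step I expect to be the main obstacle is the control of the noise term $T_2$ — specifically the claim $E\|\widehat C_{\varepsilon X}\|_{\cs}^{2}=O(n^{-1})$, which rests on the cross-terms $E[\langle\varepsilon_k,\varepsilon_\ell\rangle\langle X_k,X_\ell\rangle]$ vanishing for $k\neq\ell$. This is immediate when $X$ and $Y$ are genuinely separate objects, but in the FAR-type setting (where $X_{k+1}$ depends on $\varepsilon_k$) it must be deduced from the causal structure of the model, i.e.\ from the fact that $\varepsilon_\ell$ is independent of $\sigma(\varepsilon_j,X_j:j<\ell)$ and of $X_\ell$. All the remaining steps are algebraic rearrangements combined with the two eigenvalue-truncation inequalities $\hat\lambda_{T_n+1}<m_n^{-1}$ and $\hat\lambda_i^{-1}\le m_n$ ($i\le T_n$) that are built into \eqref{e:chooseTn}.
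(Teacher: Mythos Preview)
Your proof is correct and follows essentially the same route as the paper: the same decomposition $\hat\varrho_n-\varrho=\widehat C_{\varepsilon X}\widehat C_{XX}^{\dagger}-\varrho(I-\widehat\Pi_n)$, the same bound $\|\varrho\|_\cs^2\,m_n^{-1}$ on the bias term via $\hat\lambda_{T_n+1}<m_n^{-1}$, and the same $O(m_n/n)$ bound on the noise term via $\|\widehat C_{XX}^{\dagger}\|\le m_n$ and $E\|\widehat C_{\varepsilon X}\|_\cs^2=O(n^{-1})$. The only (cosmetic) difference is that you rewrite the trace norm as $\tfrac1n\sum_k\|\Delta_nX_k\|^2$ and apply $(a+b)^2\le 2a^2+2b^2$, thereby avoiding the cross term $\varrho\,\widehat\Pi_{T_n}^\perp\widehat C_{X\varepsilon}$ that the paper bounds separately; your concern about the vanishing of the off-diagonal expectations is handled exactly as in the paper by noting that for $k<\ell$ the variable $\varepsilon_\ell$ is independent of $(\varepsilon_k,X_k,X_\ell)$.
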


In the case when $H_2 = C[0,1]$, we add the following assumption in addition to Assumption~\ref{a:hilbertsetting}, supposing a degree of smoothness to $\varrho(X)$ and $\varepsilon_k$:
\begin{Assumption}\label{a:continuoussetting} If $H_2 = C[0,1]$, and for some $0 < \alpha \leq 1$,
\begin{enumerate}[(a),topsep=0pt,itemsep=0pt]
\item The model errors $\varepsilon_k$ a.s.\ satisfy the
Hölder condition
\begin{align}
\big| \varepsilon_k(t) - \varepsilon_k(s) \big| &< M_k \, |t-s|^\alpha
\label{e:holdercondition}
\end{align}
where $M_k$  is a positive random variable independent from $X_k$, with $E M_k^2 < \infty$.
\item
For all $x \in H_1$,
the regression operator $\varrho$ satisfies
\[
\big| \varrho x(t) - \varrho x(s) \big| \leq M_\varrho \, \|x\| \, |t-s|^\alpha,
\]
where $M_\varrho$ is a finite constant.
\end{enumerate}
\end{Assumption}

Assumption~\ref{a:continuoussetting}~(a) is fulfilled by a wide range of stochastic processes,
most notably the Brownian motion and the fractional Brownian motion.
Since $\varrho$ is linear, Assumption~\ref{a:continuoussetting}~(b) is a natural formulation of
the Hölder condition for the conditional mean of the response.
In particular, this implies that $\varrho$ is a bounded, compact operator.

\begin{Remark}{\rm
Suppose $H_1=L^2[0,1]$, so that model \eqref{mod} describes function-on-function regression. A frequently employed class of operators $\varrho$ in this setting are  kernel integral  operators, defined by a continuous kernel $\rho \in C[0,1]^2$  as
\[
\varrho x(t) = \int \rho(t,u) x(u) du.
\]
If there exists an $a \in H_1$ such that almost everywhere
\[
\big| \rho(t,u) - \rho(s,u) \big| < a(u) \, |t-s|^\alpha,
\]
then one can easily verify that
\begin{align*}
\big| \varrho x(t) - \varrho x(s) \big|
 &\leq \|a\| \, \|x\| \, |t-s|^\alpha,
\end{align*}
and thus
Assumption~\ref{a:continuoussetting}~(b) is fulfilled.}
\end{Remark}

\begin{Proposition}\label{p:mainc1}
Suppose that Assumption~\ref{a:hilbertsetting} and
Assumption~\ref{a:continuoussetting}~(a) hold, and
we define $\hat\varrho_n$ as in \eqref{e:rhohat}
with $m_n = o\big( n^{\alpha/2} \big)$.
Then Assumption~\ref{a:consist} holds.
\end{Proposition}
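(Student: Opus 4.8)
The plan is to expand $\hat\varrho_n$ through the defining FPCA formula \eqref{e:rhohat}: substituting $Y_k=\varrho X_k+\varepsilon_k$ into $\widehat C_{YX}$ and using $\widehat C_{XX}\hat v_i=\hat\lambda_i\hat v_i$ produces the decomposition
\[
\hat\varrho_n(x)-\varrho(x)=\varrho\big(\hat\Pi_{T_n}x-x\big)+V_n(x),
\]
where $\hat\Pi_{T_n}:=\sum_{i=1}^{T_n}\hat v_i\otimes\hat v_i$, $V_n(x)(t):=\tfrac1n\sum_{k=1}^n\varepsilon_k(t)\,a_k(x)$, and $a_k(x):=\sum_{i=1}^{T_n}\hat\lambda_i^{-1}\langle x,\hat v_i\rangle_{H_1}\langle X_k,\hat v_i\rangle_{H_1}$. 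The first (``bias'') summand is exactly the object that appears in the Hilbert-space setting of Proposition~\ref{p:main1}: since $\varrho\colon H_1\to C[0,1]$ is bounded (Assumption~\ref{a:hilbertsetting}~(c)), $\|\varrho(\hat\Pi_{T_n}X-X)\|_\infty\le\|\varrho\|\,\|\hat\Pi_{T_n}X-X\|_{H_1}$, and the standard consistency of the empirical eigenstructure under Assumption~\ref{a:hilbertsetting}~(b) — together with $T_n\convP\infty$, which follows from $m_n\to\infty$ and $\hat\lambda_j\convP\lambda_j$ — gives $\|\hat\Pi_{T_n}X-X\|_{H_1}\convP0$; replacing $X$ by $X_{K_n}$ dispatches the in-sample case identically. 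Hence the only genuinely new task is to prove $\|V_n(X)\|_\infty\convP0$ and $\|V_n(X_{K_n})\|_\infty\convP0$, i.e.\ to upgrade the weak-norm control of the noise term available in the Hilbert case to the supremum norm.

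For this I would combine a pointwise second-moment estimate with a modulus-of-continuity estimate. By Assumption~\ref{a:errorind} the $\varepsilon_k$ are i.i.d., mean zero, and independent of $\mathcal X:=\sigma(X,X_1,\dots,X_n)$; conditioning on $\mathcal X$ and computing the conditional variance of the sum gives $E[V_n(X)(t)^2\mid\mathcal X]=\Gamma(t,t)\,\tfrac1n\sum_{i\le T_n}\hat\lambda_i^{-1}\langle X,\hat v_i\rangle^2\le\Gamma(t,t)\,\tfrac{m_n}{n}\|X\|_{H_1}^2$, where we used $\hat\lambda_{T_n}\ge m_n^{-1}$. Since $\sup_t\Gamma(t,t)\le E\|\varepsilon_1\|_\infty^2<\infty$ and $E\|X\|_{H_1}^2<\infty$, this yields $\sup_t E\,V_n(X)(t)^2=O(m_n/n)$. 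For the modulus, Assumption~\ref{a:continuoussetting}~(a) gives $|V_n(X)(t)-V_n(X)(s)|\le|t-s|^\alpha\,\tfrac1n\sum_k M_k|a_k(X)|$; applying the Cauchy--Schwarz inequality first over $i$ and then over $k$, and using $\tfrac1n\sum_k\sum_{i\le T_n}\hat\lambda_i^{-1}\langle X_k,\hat v_i\rangle^2=T_n$ together with $T_n\le m_n\,\mathrm{tr}\,\widehat C_{XX}=m_n\cdot O_P(1)$, bounds the H\"older constant $L_n$ of $V_n(X)$ by $\sqrt{m_n}\,\|X\|_{H_1}\big(\tfrac1n\sum_k M_k^2\big)^{1/2}T_n^{1/2}=O_P(m_n)$. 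Here the second-moment condition $EM_k^2<\infty$ in Assumption~\ref{a:continuoussetting}~(a) is exactly what is needed, since $\tfrac1n\sum_k M_k^2\convP EM_1^2$ (the $M_k$ being i.i.d.\ with finite second moment).

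The final step is discretization. Fixing the grid $t_j=j/N_n$, $j=0,\dots,N_n$, continuity of $V_n(X)$ gives $\|V_n(X)\|_\infty\le\max_{1\le j\le N_n}|V_n(X)(t_j)|+L_n N_n^{-\alpha}$. A union bound with the pointwise estimate gives $P\big(\max_j|V_n(X)(t_j)|>\eta\big)\le N_n\eta^{-2}\sup_t E\,V_n(X)(t)^2=O(N_n m_n/n)$, while $L_n N_n^{-\alpha}=O_P(m_n N_n^{-\alpha})$; both are negligible as soon as $m_n^{1/\alpha}\ll N_n\ll n/m_n$, and such an $N_n$ exists because $m_n=o(n^{\alpha/2})$ forces $m_n^{1+1/\alpha}=o(n)$ (using $\alpha\le1$, i.e.\ $\alpha/2\le\alpha/(1+\alpha)$). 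This is the sole place where the prescribed rate of $m_n$ is used. The in-sample quantity $V_n(X_{K_n})$ obeys the same pointwise and modulus bounds with $\|X\|_{H_1}$ replaced by $\|X_{K_n}\|_{H_1}$ and $E\|X_{K_n}\|_{H_1}^2=E\|X_1\|_{H_1}^2$, so the identical discretization argument applies; combining the bias and noise estimates then establishes both parts of Assumption~\ref{a:consist}. I expect the main obstacle to be precisely this passage from pointwise/$L^2$ control to uniform control of the stochastic term $V_n$ — it is where the Hilbert-space argument of Proposition~\ref{p:main1} does not transfer verbatim — and the H\"older hypothesis on the errors is the device that tames the discretization error $L_n N_n^{-\alpha}$ and thereby fixes the admissible growth of $m_n$.
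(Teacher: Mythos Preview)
Your overall architecture matches the paper's: the same decomposition $\hat\varrho_n-\varrho=-\varrho\widehat\Pi_{T_n}^\perp+\widehat C_{\varepsilon X}\widehat C_X^\dagger$, the bias term handled via $\|\varrho\|_C$ and the Hilbert-space consistency of $\widehat\Pi_{T_n}$, and the noise term controlled by a discretization argument using the H\"older bound on the $\varepsilon_k$. The paper even uses the identical grid device, though it bounds the maximum by a telescoping sum of increments rather than a union bound.

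There is, however, a genuine gap in your pointwise variance step. You claim that the $\varepsilon_k$ are independent of $\mathcal X=\sigma(X,X_1,\dots,X_n)$, but Assumption~\ref{a:errorind} only gives $\varepsilon_k\perp (X_j)_{j\le k}$. Your coefficients $a_k(X)$ depend on the empirical eigenstructure $\{\hat v_i,\hat\lambda_i\}$, hence on \emph{all} of $X_1,\dots,X_n$, and in the FAR setting $X_{k+1}=\varrho X_k+\varepsilon_k$, so $\varepsilon_k$ is $\sigma(X_k,X_{k+1})$-measurable; conditioning on $\mathcal X$ then renders $\varepsilon_1,\dots,\varepsilon_{n-1}$ deterministic and your formula $E[V_n(X)(t)^2\mid\mathcal X]=\Gamma(t,t)\,\tfrac1n\sum_{i\le T_n}\hat\lambda_i^{-1}\langle X,\hat v_i\rangle^2$ fails outright. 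The same issue recurs, even more acutely, in the in-sample case.

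The paper circumvents this by not mixing the random regularization with the noise: it bounds
\[
\big\|\widehat C_{\varepsilon X}\widehat C_X^\dagger X\big\|_\infty
\le m_n\,\|X\|_{H_1}\,\|\widehat C_{\varepsilon X}\|_C,
\qquad
\|\widehat C_{\varepsilon X}\|_C\le \sup_{t\in[0,1]}\Big\|\tfrac1n\sum_{k=1}^n\varepsilon_k(t)X_k\Big\|_{H_1},
\]
and then discretizes the $H_1$-valued process $t\mapsto \tfrac1n\sum_k\varepsilon_k(t)X_k$. The crucial point is that the second moment of this process involves only the products $\varepsilon_k(t)\varepsilon_l(t)\langle X_k,X_l\rangle$, for which the one-sided independence (together with the i.i.d.\ structure of $(\varepsilon_k)$) suffices to kill the off-diagonal terms. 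With this factorization in place, your H\"older/discretization strategy goes through essentially unchanged and delivers the $o(n^{\alpha/2})$ rate.
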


\begin{Proposition}\label{p:mainc2}
Suppose that
Assumption~\ref{a:hilbertsetting} and
Assumption~\ref{a:continuoussetting} hold, and
we define $\hat\varrho_n$ as in \eqref{e:rhohat}
with $m_n = o\big( n^{\alpha/2} \big)$.
Then Assumption~\ref{a:consist2C} holds.
\end{Proposition}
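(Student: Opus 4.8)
My plan is to verify parts (a) and (b) of Assumption~\ref{a:consist2C} separately, starting from a bias/stochastic decomposition of the fitted operator. Let $\mathcal{X}\colon\mathbb{R}^n\to H_1$ be the design operator $\mathcal{X}e_k=X_k$, so $\widehat C_{XX}=n^{-1}\mathcal{X}\mathcal{X}^{*}$, and put $\Pi_{T_n}=\sum_{i\le T_n}\hat v_i\otimes\hat v_i$, $\Pi_{T_n}^{\dagger}=\sum_{i\le T_n}\hat\lambda_i^{-1}\hat v_i\otimes\hat v_i$, which satisfy $\widehat C_{XX}\Pi_{T_n}^{\dagger}=\Pi_{T_n}$ and $\Pi_{T_n}^{\dagger}\widehat C_{XX}\Pi_{T_n}^{\dagger}=\Pi_{T_n}^{\dagger}$. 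Using $\widehat C_{YX}=\varrho\widehat C_{XX}+\widehat C_{\varepsilon X}$ with $\widehat C_{\varepsilon X}=n^{-1}\sum_j\varepsilon_j\otimes X_j$, \eqref{e:rhohat} becomes $\hat\varrho_n=\varrho\Pi_{T_n}+\widehat C_{\varepsilon X}\Pi_{T_n}^{\dagger}$, so for every $k$
\[
(\hat\varrho_n-\varrho)X_k=B_k+S_k,\quad B_k:=-\varrho(I-\Pi_{T_n})X_k,\quad S_k:=\widehat C_{\varepsilon X}\Pi_{T_n}^{\dagger}X_k=\sum_{j=1}^{n}H_{kj}\varepsilon_j ,
\]
where $H:=n^{-1}\mathcal{X}^{*}\Pi_{T_n}^{\dagger}\mathcal{X}$. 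The key algebraic observation is that $H$ is symmetric with $H^{2}=n^{-1}\mathcal{X}^{*}(\Pi_{T_n}^{\dagger}\widehat C_{XX}\Pi_{T_n}^{\dagger})\mathcal{X}=H$, i.e.\ $H$ is an orthogonal projection on $\mathbb{R}^n$, with $\mathrm{tr}\,H=\mathrm{tr}(\Pi_{T_n}^{\dagger}\widehat C_{XX})=\mathrm{tr}\,\Pi_{T_n}=T_n\le m_n\,n^{-1}\sum_k\|X_k\|^2$.

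For part (b) this projection structure does all the work. Since $\hat\varepsilon_{k,n}=Y_k-\sum_jH_{kj}Y_j$, for fixed $t,s$ the vector $\big(\hat\varepsilon_{k,n}(t)-\hat\varepsilon_{k,n}(s)\big)_k$ equals $(I-H)\big(Y_k(t)-Y_k(s)\big)_k$; because centring only decreases a sum of squares and $I-H$ is a contraction on $\mathbb{R}^n$,
\[
V_n^2(t,s)=\frac1n\sum_{k=1}^{n}\Big((\hat\varepsilon_{k,n}-\bar\varepsilon_{\cdot,n})(t)-(\hat\varepsilon_{k,n}-\bar\varepsilon_{\cdot,n})(s)\Big)^2\le\frac1n\sum_{k=1}^{n}\big(Y_k(t)-Y_k(s)\big)^2 .
\]
From $Y_k=\varrho X_k+\varepsilon_k$ and Assumption~\ref{a:continuoussetting}(a),(b), $|Y_k(t)-Y_k(s)|\le(M_\varrho\|X_k\|+M_k)|t-s|^{\alpha}$, so the H\"older bound holds with $M_V^{2}:=n^{-1}\sum_k(M_\varrho\|X_k\|+M_k)^{2}$ (enlarged by a fixed constant if the strict inequality is wanted), and stationarity with $E\|X_1\|^2<\infty$, $EM_1^2<\infty$ gives $EM_V\le(EM_V^{2})^{1/2}=\big(E(M_\varrho\|X_1\|+M_1)^{2}\big)^{1/2}<\infty$, uniformly in $n$. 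Note that $m_n$ does not enter.

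For part (a), $G_n(t,s):=n^{-1}\sum_k(\hat\varrho_n-\varrho)X_k(t)\,(\hat\varrho_n-\varrho)X_k(s)$ is a positive--semidefinite kernel, so its supremum is attained on the diagonal and $\sup_{t,s}|G_n(t,s)|\le 2\sup_t n^{-1}\sum_kB_k(t)^2+2\sup_t n^{-1}\sum_kS_k(t)^2$. The bias part is controlled by $\sup_t n^{-1}\sum_kB_k(t)^2\le c_\varrho^{2}\sum_{i>T_n}\hat\lambda_i\convP0$, where $c_\varrho:=\sup_{\|x\|\le1}\|\varrho x\|_\infty<\infty$ (Assumption~\ref{a:hilbertsetting}(c)) and one uses $\widehat C_{XX}\to C_{XX}$, $T_n\convP\infty$. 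For the stochastic part let $W(t):=\big(n^{-1}\sum_kS_k(t)^2\big)^{1/2}=n^{-1/2}\|H\boldsymbol\varepsilon(t)\|_2$ with $\boldsymbol\varepsilon(t)=(\varepsilon_1(t),\dots,\varepsilon_n(t))^{\top}$. I would combine: (i) $t\mapsto W(t)$ is H\"older-$\alpha$ with an $O_P(1)$ modulus, since $(S_k(t)-S_k(s))_k=H(\boldsymbol\varepsilon(t)-\boldsymbol\varepsilon(s))$, $H$ is a contraction, and Assumption~\ref{a:continuoussetting}(a) give $|W(t)-W(s)|\le n^{-1/2}\|\boldsymbol\varepsilon(t)-\boldsymbol\varepsilon(s)\|_2\le|t-s|^{\alpha}\big(n^{-1}\sum_jM_j^{2}\big)^{1/2}$; and (ii) the pointwise bound $\sup_t EW(t)^2=\sup_t n^{-1}E[\boldsymbol\varepsilon(t)^{\top}H\boldsymbol\varepsilon(t)]=O(m_n/n)$, which follows from $E[\mathrm{tr}\,H]=E[T_n]\le m_nE\|X_1\|^2$ and $\sup_t E[\varepsilon_1(t)^2]\le E\|\varepsilon_1\|_\infty^2<\infty$ once the data--driven $\Pi_{T_n}^{\dagger}$ is replaced by its error--independent population counterpart, exactly as in the proof of Proposition~\ref{p:mainc1}. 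Discretising $[0,1]$ by a grid of mesh $N^{-1}$ and bounding the grid maximum by the sum of its terms gives $E\sup_t W(t)^2\lesssim (N+1)O(m_n/n)+N^{-2\alpha}O(1)$, and $N\asymp(n/m_n)^{1/(2\alpha+1)}$ yields $E\sup_t W(t)^2=O\big((m_n/n)^{2\alpha/(2\alpha+1)}\big)\to0$ since $m_n=o(n^{\alpha/2})$ (in particular $m_n=o(n)$). Hence $\sup_t n^{-1}\sum_kS_k(t)^2\convP0$ and part (a) follows.

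The hard part will be the uniform--in--$t$ control of the stochastic term in part (a): the crude estimate $\sup_t|S_k(t)|\le|S_k(t_0)|+(\text{H\"older modulus})$ is useless because the H\"older modulus of $t\mapsto S_k(t)$ is only $O_P(1)$, so the chaining step and, above all, the pointwise moment bound (ii) are genuinely needed --- and (ii) requires dealing with the dependence between the errors and the data--driven truncation projection, which is the same technical issue already handled in the proof of Proposition~\ref{p:mainc1} and whose argument I would reuse. Part (b), by contrast, is short exactly because $H$ is an orthogonal projection: the fitted contribution to the residuals is a contraction of the responses, so the regularisation level $m_n$ --- which a priori threatens a blow-up through $\sum_{i\le T_n}\hat\lambda_i^{-1}\asymp m_n^{2}$ --- never touches the H\"older bound for $V_n^2$.
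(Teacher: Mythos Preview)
Your argument for part~(b) is genuinely different from, and cleaner than, the paper's. The paper decomposes $\hat\varepsilon_{k,n}(t)-\hat\varepsilon_{k,n}(s)$ into three pieces (pure error, bias, stochastic) and bounds each via Assumption~\ref{a:continuoussetting}; the stochastic piece picks up an $m_n$ factor and the final statement is only $E[V_n^2(t,s)]=O(m_n/n)\,|t-s|^{2\alpha}$. Your observation that the hat matrix $H$ is an orthogonal projection, so that $I-H$ is an $\ell_2$-contraction on $\mathbb{R}^n$, bypasses the decomposition entirely and delivers a \emph{pathwise} bound $V_n^2(t,s)\le M_V^2|t-s|^{2\alpha}$ with $M_V^2=n^{-1}\sum_k(M_\varrho\|X_k\|+M_k)^2$ that does not involve $m_n$ at all --- this is exactly what Assumption~\ref{a:consist2C}(b) asks for, and it is a nicer argument.

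For part~(a) your route is close to the paper's, but the justification of step~(ii) has a gap. You cannot conclude $E[\boldsymbol\varepsilon(t)^{\top}H\boldsymbol\varepsilon(t)]=E[\mathrm{tr}\,H]\cdot E[\varepsilon_1(t)^2]$, because under Assumption~\ref{a:errorind} the errors are only independent of \emph{past} covariates: in the FAR case $X_l$ depends on $\varepsilon_{l-1},\varepsilon_{l-2},\dots$, so $H$ (being $\sigma(X_1,\dots,X_n)$-measurable) is not independent of $\boldsymbol\varepsilon(t)$, and replacing $\Pi_{T_n}^{\dagger}$ by a population pseudo-inverse does not help since $H$ still involves $\mathcal{X}$. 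The paper does not perform any such replacement; instead it simply uses the deterministic bound $\|\widehat C_X^{\dagger}\|\le m_n$ (built into the definition of $T_n$) to write
\[
W(t)^2=\langle \widehat C_X^{\dagger}Z(t),Z(t)\rangle\le m_n\|Z(t)\|^2,\qquad Z(t)=\frac{1}{n}\sum_{k=1}^n\varepsilon_k(t)X_k,
\]
after which the ``off-diagonal terms vanish'' argument behind \eqref{e:p4} gives $E\|Z(t)\|^2=n^{-1}E[\varepsilon_1(t)^2]E\|X_1\|^2=O(1/n)$, since for $k<l$ one has $\varepsilon_l\perp(\varepsilon_k,X_k,X_l)$. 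With this correction your bound (ii) holds with the stated rate $O(m_n/n)$ and your discretisation/chaining step goes through unchanged; alternatively you can skip the chaining altogether and quote $\sup_t\|Z(t)\|^2=O_P(n^{-\alpha})$ from \eqref{e:p4} directly, which is precisely the paper's route.
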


The proofs of Propositions~\ref{p:main1}--\ref{p:mainc1} are relegated to Section~\ref{s:proofs},
while the proof of Proposition~\ref{p:mainc2} is given in Appendix~\ref{a:additionalproofs}.

We conclude this section with some technical discussion.
We begin by noting that the sequence $m_n$, which controls
how many principal components of $X_k$ are used in forming $\hat{\varrho}_n$, can be of asymptotically higher order if $H_2$ is a Hilbert space compared to the setting when $H_2=C[0,1]$, and $\alpha < 1$.
In the case where the Hölder exponent in Assumption~\ref{a:continuoussetting} is $\alpha=1$,
the responses $Y_k$ are Lipschitz continuous, which implies they are weakly differentiable. As a result one may then take $H_2=W$, a separable Hilbert space, leading back to the rate condition $m_n = o(\sqrt{n})$. The order $n^{\alpha/2}$ is sufficient but not sharp. In fact, for $\alpha < 1/2$, a different proof yields that $m_n = o\big( n^{1/(2+\alpha^{-1})} \big)$ also leads to consistency. In the case of the Brownian motion, $\alpha=1/2$ demands that $m_n=o\big(n^{1/4}\big)$.
This is still of higher order than that suggested to be used by \cite{hormann:kidzinski:2015}
for consistent estimation of the regression operator in Hilbert spaces.

Our second technical remark concerns the choice of $H_1$. Assumption~\ref{a:hilbertsetting}~(a) requires $H_1$ to be a Hilbert space. While typically this is not a restriction, some care needs to be taken in the case of an FAR model. Here, when we study continuous functions, we choose $H_2=C[0,1]$. While it is natural to assume that the covariate and response space coincide for an FAR (i.e.\ requiring $H_1=C[0,1]$, too), this is not necessarily the case. For example when we consider a kernel integral operator  $\varrho$ with a continuous kernel, then we may still use $H_1=L^2[0,1]$ and $H_2=C[0,1]$ using the natural embedding of $C[0,1]$ in $L^2[0,1]$.
Alternatively we can resort to the Sobolev space $H_1=W^{1,2}[0,1]$ of once (weakly) differentiable functions in $L^2[0,1]$ equipped with the norm $\|f\|_W = \|f\|_{L^2}  + \|f'\|_{L^2} $; see Chapter 8 of \cite{brezis:2010}. The space $W^{1,2}[0,1]$ is a separable Hilbert space, and because $\|\cdot\|_{\infty} \leq \|\cdot\|_{W}$,
the space $W^{1,2}[0,1]$ can be embedded in $C[0,1]$. This will allow for more general class of continuous operators (for example, including pointwise evaluations).
When viewed with the moment conditions on $\|X_k\|_{H_1}$ implicit to Assumption~\ref{a:hilbertsetting}~(b),
this can be done so long as the covariates $X_k$ are sufficiently smooth.

\section{Some further examples of events $A$}\label{s:partialA}

In addition to level sets and pointwise or uniform prediction sets mentioned in Examples~\ref{E:levelsets} and \ref{example:pred} above, in this section we list some specific examples of sets $A$ that are of interest for the data that we discuss, and which might be useful in other applications.

\begin{Example}[Contrast sets] For some $\gamma\in H_2$ and $a\in\mathbb{R}$ let
\[
A = \left\{ y\in H_2\colon \int_0^1 \gamma(t)y(t)dt>a \right \}.
\]
For example, when $\gamma\equiv 1$, then $A$ is the set of curves which are in average above level $a$.
If $\gamma(t)=2\mathds{1}\{t\leq 1/2\}-1$, or $\gamma(t)=1/2-t$ and $a=E\int_0^1y(t)dt$,
then the set $A$ can be identified as functions with decreasing trend.
\end{Example}

\begin{Example}[Extremal sets.]\label{E:extremalsets}
Let $H_2=C[0,1]$ and let $d\in\mathbb{R}$ and
\[
A = \left\{ y \in H_2\colon \max_{u\in[0,1]} y(u) > d \right\}.
\]
Then $A$ contains functions which will exceed a certain threshold $d$. Note that this is the compliment of a boundary set with bounds $\alpha = -\infty$ and $\beta = d$.
\end{Example}

\begin{Example}[Excursion sets.] Let $H_2=C[0,1]$, $d\in\mathbb{R}$ and $c\in (0,1)$. Set
\[
A = \left\{ y \in H_2\colon \exists \; 0 \leq a < b \leq 1
\text{ with }
b-a \geq c \text{ s.t. } \min_{u\in[a,b]} y(u) > d \right\}.
\]
Then $A$ are the functions which uninterruptedly stay strictly above a certain threshold
for a certain amount of time.
\end{Example}

A crucial condition in  Theorems~\ref{t:main1} and \ref{t:main2}  is that $P(Y\in \partial A)=0$. Below we discuss some examples for which this requirement is fulfilled. For the purpose of illustration, we give details in the case of level sets (Example~\ref{E:levelsets}) and $H_2=C[0,1]$. The other examples can be explored similarly.

\begin{Proposition}\label{l:level1}
Let $\alpha \in \mathbb{R}$ and $z \in [0,1)$.
We define $A=\{y\in C[0,1]\colon \lambda(y>\alpha)\leq z\}$.
The following conditions imply $P(Y\in\partial A)=0$:
\begin{align}
& (i) \; P\big(\lambda(Y=\alpha)>0\big)=0 \quad\text{and}\quad   (ii)\; P\big(\lambda(Y>\alpha)=z\big)=0 && \text{for $z\in (0,1)$},\label{e:condlevel}\\
 &P\big(\sup_{t\in [0,1]} Y(t)=\alpha\big)=0 && \text{for $z=0$}.\label{e:condlevel2}
\end{align}
\end{Proposition}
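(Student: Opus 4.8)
The plan is to characterize the topological boundary $\partial A$ of the level set $A=\{y\in C[0,1]\colon \lambda(y>\alpha)\le z\}$ in the supremum norm, and then show that, under the stated conditions, the response $Y$ avoids this boundary almost surely. Write $\Lambda(y):=\lambda(\{t\in[0,1]\colon y(t)>\alpha\})$ for the occupation measure functional, so that $A=\Lambda^{-1}([0,z])$. The first step is to understand the continuity properties of $\Lambda$ on $C[0,1]$. The map $y\mapsto \Lambda(y)$ is in general only lower semicontinuous: if $y_n\to y$ uniformly, then $\{y>\alpha\}\subset\liminf\{y_n>\alpha\}$, so by Fatou $\Lambda(y)\le\liminf\Lambda(y_n)$. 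It is continuous at $y$ precisely when $\lambda(\{y=\alpha\})=0$, because then $\{y\ge\alpha\}$ and $\{y>\alpha\}$ differ by a null set and one also gets $\limsup\Lambda(y_n)\le\Lambda(y)$ by applying the lower-semicontinuity argument to $-y$ and the level $-\alpha$ (using $\lambda(\{y=\alpha\})=0$).

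The second step is to pin down $\partial A$. Clearly $A$ is closed away from the set where $\Lambda$ is discontinuous is false — rather, since $\Lambda$ is lower semicontinuous, $A=\Lambda^{-1}([0,z])$ is closed, so $\partial A=A\setminus \mathrm{int}(A)$. I claim that for $z\in(0,1)$,
\[
\partial A \subset \{y\colon \lambda(y=\alpha)>0\}\ \cup\ \{y\colon \Lambda(y)=z\}.
\]
Indeed, suppose $y\in A$ with $\lambda(\{y=\alpha\})=0$ and $\Lambda(y)<z$. Then $\Lambda$ is continuous at $y$, so for a small enough uniform ball around $y$ we still have $\Lambda<z$, hence that ball lies in $A$ and $y\in\mathrm{int}(A)$. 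Thus any boundary point must either satisfy $\lambda(\{y=\alpha\})>0$ or $\Lambda(y)=z$ (the case $\Lambda(y)>z$ is excluded since $y\in A$ means $\Lambda(y)\le z$; and $\Lambda(y)=z$ with $\lambda(\{y=\alpha\})=0$ may indeed be a boundary point because an arbitrarily small downward perturbation can only decrease $\Lambda$ while small upward perturbations near the level set can push $\Lambda$ above $z$). Taking probabilities and invoking conditions (i) and (ii) in \eqref{e:condlevel} gives $P(Y\in\partial A)=0$ for $z\in(0,1)$. For $z=0$ the set is $A=\{y\colon \lambda(y>\alpha)=0\}=\{y\colon \sup_t y(t)\le\alpha\}$ (using continuity of $y$: $\lambda(y>\alpha)=0$ iff $y\le\alpha$ everywhere). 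This is a closed half-space-type set whose boundary in $C[0,1]$ is exactly $\{y\colon \sup_t y(t)=\alpha\}$: if $\sup y<\alpha$ then a small ball stays below $\alpha$; if $\sup y>\alpha$ then $y\notin A$ and a small ball also stays outside; the remaining case $\sup y=\alpha$ is approachable from both sides by uniform perturbations $y\pm\delta$. Hence \eqref{e:condlevel2} gives $P(Y\in\partial A)=0$.

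The main obstacle is the precise determination of $\partial A$ for $z\in(0,1)$ — in particular verifying that the inclusion above is all that is needed, i.e. that no further measure-theoretic subtlety (such as the structure of the level set $\{y=\alpha\}$ having positive measure versus the super-level set having measure exactly $z$ achieved ``from inside'') forces additional boundary points beyond those covered by (i) and (ii). The key technical point is the semicontinuity dichotomy: $\Lambda$ is continuous at $y$ iff $\lambda(\{y=\alpha\})=0$, and at such points interior membership is governed purely by whether $\Lambda(y)<z$ or $\Lambda(y)>z$, leaving $\Lambda(y)=z$ as the only other boundary contribution. Everything else is a routine application of Fatou's lemma and the definition of the uniform topology; no deep properties of the law of $Y$ are used beyond the two avoidance conditions themselves.
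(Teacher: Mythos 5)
Your proof is correct. At its core it establishes the same inclusion as the paper, namely $\partial A \subset \{y\colon \lambda(y=\alpha)>0\}\cup\{y\colon \lambda(y>\alpha)=z\}$, and rests on the same key fact as the paper's helper lemma: the occupation functional $\Lambda(y)=\lambda(y>\alpha)$ is continuous at any $y$ with $\lambda(y=\alpha)=0$. The differences are in the packaging, and they are genuine if modest. You prove the continuity statement by applying Fatou's lemma to the indicator functions of $\{y_n>\alpha\}$ and $\{y_n<\alpha\}$, whereas the paper views $y_n,y$ as random variables on $([0,1],\lambda)$ and invokes weak convergence of their distribution functions; both are sound, yours is arguably more self-contained. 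More substantially, your observation that lower semicontinuity of $\Lambda$ makes $A=\Lambda^{-1}([0,z])$ closed reduces the boundary computation to a single check, $\partial A = A\setminus\mathrm{int}(A)$, so you only need to show that points of $A$ with $\lambda(y=\alpha)=0$ and $\Lambda(y)<z$ are interior; the paper instead runs a two-case sequential argument by contradiction (approximating a point of $A$ from outside and a point of $A^c$ from inside). Finally, for $z=0$ you identify $A=\{\sup_t y(t)\le\alpha\}$ and $\partial A=\{\sup_t y(t)=\alpha\}$ explicitly, matching condition \eqref{e:condlevel2}, where the paper only remarks that one can argue as in the first case; your explicit treatment is a small but welcome completion of that step.
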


 The conditions in \eqref{e:condlevel} and \eqref{e:condlevel2} are satisfied by many well known processes, including Brownian motion. They are also generally satisfied by continuously differentiable Gaussian processes under standard non-degeneracy conditions. Such processes might be used to model functional data generated by applying standard smoothing operations, for instance using cubic-splines or trigonometric polynomials, to raw discrete data. We note that comparable differentiability conditions are assumed in \cite{fan:muller:2021}. The following proposition, whose proof we defer to Section~\ref{s:proofs}, describes these conditions.

\begin{Proposition}\label{gauss-lemm}
Suppose that $Y$ is a continuously differentiable Gaussian process with covariance kernel $C_Y$. If $C_Y(t,t)  > 0$ for all $t\in [0,1]$, then \eqref{e:condlevel}(i) holds. For $\ell \in \mathbb{N}$, and $0\le t_1< \cdots  < t_\ell \le 1$, let
\[
r_Y(t,s) = \frac{C_Y(t,s)}{[C_Y(t,t) C_Y(s,s)]^{1/2}}, \mbox{ and } R_{t_1,...,t_\ell} = \{ r_Y(t_i,t_j)\}_{1\le i,j \le \ell} \in \mathbb{R}^{\ell\times \ell}.
\]
If in addition for all $\ell\in \mathbb{N}$ and $0\le t_1< \cdots  < t_\ell \le 1$, there exists constants $c_1, c_2 > 0 $ such that $det( R_{t_1,...,t_\ell}) \ge c_1 \min_{ 1 \le i\ne j \le \ell } | t_i - t_j |^{c_2}$, then  \eqref{e:condlevel}(ii) holds.
If $Y$ is twice continuously differentiable, and
$(Y(t_1),\dots,Y(t_\ell),Y'(t_1),\dots,Y'(t_\ell),Y''(t_1),\dots,Y''(t_\ell))$ has a non-degenerate distribution, then  \eqref{e:condlevel2} holds.
\end{Proposition}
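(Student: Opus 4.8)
The plan is to treat the three claims separately, each reducing to a statement about finite-dimensional Gaussian vectors via a covering/continuity argument.

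\medskip

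\textbf{Claim (i): $P(\lambda(Y = \alpha) > 0) = 0$ when $C_Y(t,t)>0$ for all $t$.} First I would observe that by Fubini's theorem,
\[
E\,\lambda(t : Y(t) = \alpha) = \int_0^1 P(Y(t) = \alpha)\,dt.
\]
Since $Y(t)$ is Gaussian with variance $C_Y(t,t) > 0$, it has a density, so $P(Y(t) = \alpha) = 0$ for every $t$. Hence $E\,\lambda(Y=\alpha) = 0$, which forces $\lambda(Y=\alpha) = 0$ almost surely. No use of differentiability is needed here; continuity of $Y$ is only needed to make $\{Y = \alpha\}$ measurable and the set $\lambda(Y>\alpha)$ well-defined.

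\medskip

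\textbf{Claim (ii): $P(\lambda(Y>\alpha) = z) = 0$ for fixed $z \in (0,1)$, under the determinant lower bound.} This is the substantive part. The random variable $L := \lambda(t : Y(t) > \alpha)$ takes values in $[0,1]$, and I want to show it has no atom at $z$. The strategy is to show $L$ has a bounded density on $(0,1)$, or at least that $P(L = z) = 0$ directly. I would approximate $L$ by Riemann sums: set $L_m = \frac{1}{m}\sum_{i=1}^m \mathds{1}\{Y(i/m) > \alpha\}$. By continuity of $Y$ and claim (i) (which gives $\lambda(Y = \alpha) = 0$ a.s.), $L_m \to L$ a.s. The distribution of $L_m$ is governed by the Gaussian vector $(Y(1/m),\dots,Y(m/m))$, and the point is to control the probabilities $P(L_m = k/m)$ uniformly. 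After standardizing each coordinate (using $C_Y(t,t) > 0$), these are orthant-type probabilities for a Gaussian vector with correlation matrix $R_{1/m,\dots,m/m}$, whose determinant is bounded below by $c_1 m^{-c_2}$ by hypothesis — this polynomial non-degeneracy is exactly what prevents the mass from concentrating. A Gaussian anti-concentration / small-ball estimate then shows that for any interval $I$ of length $\delta$, $P(L_m \in I)$ is small uniformly in $m$ (the determinant bound controls the conditional densities entering the recursive conditioning argument), and passing to the limit gives $P(L \in I) \le \omega(\delta)$ with $\omega(\delta)\to 0$, hence $P(L = z)=0$. The differentiability of $Y$ enters to justify that the limit $L$ is genuinely the occupation measure and that the discretization error is negligible; the determinant condition is the genuine hypothesis doing the work.

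\medskip

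\textbf{Claim ($z=0$ case): $P(\sup_t Y(t) = \alpha) = 0$ when $Y$ is $C^2$ and the joint law of $(Y(t_i), Y'(t_i), Y''(t_i))_{i\le\ell}$ is non-degenerate.} Here I would argue that $M := \sup_{t\in[0,1]} Y(t)$ has no atom at $\alpha$. The maximum of a differentiable process on $[0,1]$ is attained either at an endpoint or at an interior critical point. Conditioning on the event that the max is attained at an interior point $t^\ast$, one has $Y'(t^\ast) = 0$ and $Y''(t^\ast) \le 0$, and $M = Y(t^\ast)$. The standard argument (as in the proof that the maximum of a smooth Gaussian process has a density — Azaïs--Wschebor type results) is: for the event $\{M = \alpha\}$ to have positive probability, there would have to be a positive-probability set on which $Y$ hits level $\alpha$ with a horizontal tangent at that level, and the non-degeneracy of $(Y(t), Y'(t), Y''(t))$ lets us show, via a Kac--Rice / Fubini bound on the expected number of critical points at level exactly $\alpha$, that this has measure zero. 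Concretely I would bound $P(M = \alpha)$ by the expected number of $t$ with $Y(t) = \alpha$, $Y'(t) = 0$, which by the Kac--Rice formula equals $\int_0^1 E[\,|Y'(t)|\,\delta_{(\alpha,0)}(Y(t),Y'(t))\,]\,dt = 0$ since the Dirac-type integrand picks out a two-dimensional event $\{Y(t)=\alpha, Y'(t)=0\}$ that the non-degenerate Gaussian vector $(Y(t),Y'(t))$ assigns density to, integrated against the atom — wait, more carefully: $\{Y(t) = \alpha, Y'(t) = 0\}$ is a null event for each $t$ because $(Y(t),Y'(t))$ has a (2-dimensional) density, so by Fubini the expected number of such $t$ is zero, hence a.s. there is no interior point with $Y(t)=\alpha$ and $Y'(t)=0$; combined with the endpoints (handled by claim (i) applied at $t=0,1$) this gives $P(M=\alpha)=0$. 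The twice-differentiability and the joint non-degeneracy including $Y''$ are used to ensure the Kac--Rice machinery applies (finiteness of the factorial moments of the crossing count).

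\medskip

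\textbf{Main obstacle.} The hard part is claim (ii): turning the determinant lower bound $\det(R_{t_1,\dots,t_\ell}) \ge c_1 \min|t_i - t_j|^{c_2}$ into a quantitative anti-concentration bound for the occupation time $L$ that survives the passage to the continuum limit. One must handle the near-diagonal degeneracy carefully — consecutive grid points become highly correlated as $m\to\infty$ — and show that the polynomial (rather than exponential) rate of degeneracy is enough to keep $\sup_k P(L_m = k/m) \to 0$ and more generally keep the modulus of continuity of the law of $L_m$ under control uniformly in $m$. I expect to do this by conditioning sequentially on $Y(t_1),\dots,Y(t_{j-1})$ and using that the conditional variance of $Y(t_j)$ is bounded below by a ratio of successive determinants, which the hypothesis controls.
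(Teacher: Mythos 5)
Your argument for \eqref{e:condlevel}(i) via Fubini is correct (and more elementary than the paper, which deduces it from Bulinskaya's theorem on the a.s.\ finiteness of the zero set). The problems are with the other two claims.

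For \eqref{e:condlevel}(ii), the step you yourself flag as the main obstacle is a genuine gap, and the route you propose does not obviously close it. You discretize on the full grid $\{i/m\}$ and want a uniform-in-$m$ anti-concentration bound for $L_m=\frac1m\sum_i\mathds{1}\{Y(i/m)>\alpha\}$, invoking the determinant hypothesis at separation $1/m$, where it only gives $\det(R)\ge c_1 m^{-c_2}$ and hence sequential conditional standard deviations of order $m^{-c_2/2}\to 0$. There is no identified mechanism by which this polynomially vanishing conditional non-degeneracy yields $\sup_m P(L_m\in(z-\delta,z+\delta))\to 0$ as $\delta\to 0$: anti-concentration for sums of strongly dependent indicators is exactly the hard part, and your sketch asserts it rather than proves it. The paper's proof is structured precisely to avoid this: by Bulinskaya's theorem the zero set of $Y-\alpha$ is a.s.\ finite with a minimal gap $\ell\ge\ell_\epsilon$ off an event of probability $\epsilon$; the event $\{\lambda(Y>\alpha)=z\}$ then forces the $k$ zeros to satisfy one exact linear constraint, so after discretizing at scale $1/n$ (and bounding $\|Y'\|_\infty$ by $b_n$) only $O(n^{k-1})$ grid configurations are possible, each with probability $O((b_n/n)^k)$ because the determinant hypothesis is used only at separation $\ge\ell_\epsilon$, i.e.\ as a \emph{constant} lower bound giving a bounded joint density. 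The product $n^{k-1}(b_n/n)^k\to 0$ finishes it. Note also that in your scheme the differentiability of $Y$ plays no real role, whereas in the statement (and the paper's proof) it is essential, via the finiteness/separation of zeros and the gradient bound; this mismatch is a sign the hypotheses are not being used as intended.

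For \eqref{e:condlevel2}, your final justification is incorrect as written: Fubini applied to $\{t: Y(t)=\alpha,\ Y'(t)=0\}$ shows that the \emph{Lebesgue measure} of this set is a.s.\ zero, not that the \emph{number} of such points is zero, and the event $\{\sup_t Y(t)=\alpha$ attained in the interior$\}$ only requires one such point to exist — which is perfectly compatible with the set having measure zero. To count points you genuinely need a Kac--Rice argument for the zeros of $Y'$ weighted by the level of $Y$ (with the $|Y''|$ factor and its validity conditions supplied by the non-degeneracy of $(Y(t_i),Y'(t_i),Y''(t_i))$), showing that the expected number of critical points with critical value exactly $\alpha$ vanishes; your garbled Kac--Rice display with $|Y'(t)|$ and a two-dimensional Dirac mass is then replaced by the flawed Fubini step rather than by this argument. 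The paper simply invokes Theorem 7.3 of \cite{Azais:1991055} for this claim; your strategy (interior critical point at level $\alpha$ plus endpoints) is the right one and is repairable, but the proof as proposed does not establish it.
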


If $A = \{y \in H_2: \langle y, \gamma \rangle > c \}$
is a contrast set with some $\gamma \in H_2$, $c \in \mathbb{R}$,
then from the continuity of the inner product it follows that
$\partial A = \{y \in H_2: \langle y, \gamma \rangle = c \}$, both for $H_2=L^2[0,1]$, and $H_2=C[0,1]$. As for the boundary set
$B_{\alpha,\beta} = \{y\in H_2\colon y([0,1]) \subseteq [\alpha, \beta] \}$, with $H_2=C[0,1]$, $\partial B_{\alpha,\beta} =
\{y\in H_2\colon \sup_{t\in [0,1]} y(t) = \beta \vee \inf_{t\in [0,1]} y(t) = \alpha \}.$

\section{Simulation experiments and data illustrations}\label{s:realdata:sim}

In this section we present the results of several simulation experiments and real data analyses that aimed to evaluate and compare the performance of the algorithms {\bf boot} and {\bf Gauss}, and illustrate their application. We begin by defining some alternate methods that may be used to estimate $P(Y\in A |X)$, and we describe two recent procedures proposed for functional quantile regression and construction of prediction sets in functional data prediction, respectively.

\subsection{Competing methods}

A simple method to estimate $P(Y\in A |X)$ is to employ functional binomial regression.
This entails positing the model
\[
P(Y \in A | X=x) = g\big( \beta_0 + \langle x, \beta \rangle \big)
\]
for some $\beta_0 \in \mathbb{R}$ and $\beta \in L^2[0,1]$, and a link function $g$
that can be chosen from a variety of possibilities, but is most often
the logistic link function, or the cumulative distribution function of a standard normal random variable (the ``probit link''). For more details of such models, we refer to \cite{muller:stadtmuller:2005} and \cite{mousavi:sorensen:2017}. One drawback of note in applying logistic regression in this setting is that changing the set $A$ necessitates refitting the model, which can be computationally cumbersome, and further, as a consequence, the resulting estimators of $P(Y \in A | X)$ need not be monotone with respect to increasing (or decreasing) sets $A$. An approach to adjust such estimators to restore monotonicity is to use rearrangement or isotonization, as discussed in e.g.  \cite{chernozhukov:2010}.

Since the exact relationship between the function $X$ and the event $\{Y \in A\}$
is unknown and difficult to describe in parametric terms, even under model \eqref{mod},
another promising approach is to use nonparametric techniques such a kernel estimators.
Generalizing the method found in Section~5.4 of \cite{ferraty:vieu:2006},
the conditional distribution $P(Y \in A | X = x) = E(\mathds{1}\{Y \in A\} | X = x)$
can be estimated by the functional extension of the
Nadaraya--Watson estimator
\begin{align}
\hat P^\text{NW}(Y\in A|X=x) = \frac{ \sum_{i=1}^n K\big( h^{-1} d(x,X_i) \big)  \; \mathds{1}\{Y_i \in A\} }
                   { \sum_{i=1}^n K\big( h^{-1} d(x,X_i) \big) },
\label{e:nadwat}
\end{align}
where $K$ is a kernel function on the nonnegative real numbers,
$d$ is a distance measure on $H_1$, and
$h > 0$ is a smoothing parameter corresponding to the bandwidth of the kernel.
Note that while the choice of $K$ is typically unproblematic,
the choice of $d$ is more intricate and is often taken to depend on the data.
The bandwidth $h$ represents the trade-off between bias (oversmoothing) and error (undersmoothing),
and is normally taken to decrease with the sample size $n$. \cite{ferraty:vieu:2006} establish consistency conditions for the estimator \eqref{e:nadwat}
in the case when the sequence $\{ (X_k,Y_k)\colon k\geq 1 \}$ is $\alpha$-mixing and $Y$ is scalar. When we apply this method below, we take $K$ to be the standard Gaussian kernel, $d$ to be the norm on $H_1$, and select $h$ using cross-validation.  We note that similarly to functional logistic regression based estimators, a draw back of these estimators is that if one changes the set $A$, then the bandwidth $h$ in general should be recalibrated, and the resulting estimators need not be monotone in $A$ if the bandwidth $h$ is not held fixed for all sets $A$.

Similar options may be derived from the local linear functional estimator,
which improves upon the Nadaraya--Watson estimator by including a linear terms of the form $\langle x - X_i, \beta \rangle$ into the computation of the weights; see \cite{berlinet:2011}. The $k$-nearest neighbors (kNN) functional estimator
is a variation on the Nadaraya--Watson estimator with adaptive bandwidth,
i.e. $h$ is the smallest number such that
$\big|\{ X_i\colon d(x,X_i) \leq h \}\big| = k$.
The kNN estimator has been shown to be consistent for non-parametric regression
by \cite{kudraszow:2013}.

In order to evaluate the proposed algorithms for the construction of prediction sets, we compared to the method of \citet{paparoditis:2020} in the setting of forecasting FAR(1) processes $Y_k-\mu = \varrho (Y_{k-1}-\mu) + \varepsilon_k$. Subsequent to forming the estimator $\hat{\varrho}_n$ using functional principal component analysis, their method entails performing a (sieve) bootstrap on the functional principal component scores of the residuals $\hat{\varepsilon}_{k,n}$ in order to estimate the distribution of the prediction error.
$Y_{n+1}$ is then forecast by $\widehat Y_{n+1} = \hat{\mu}+ \hat{\varrho}_n(Y_n-\hat\mu)$, and uniform prediction sets for $Y_{n+1}$ are constructed of the form
\[
\{ y \in C[0,1] :   \widehat Y_{n+1}(t) + L \, \sigma_{n+1}(t) \le y(t) \le  \widehat Y_{n+1}(t) + U \, \sigma_{n+1}(t), \mbox{ for all } t\in[0,1] \},
\]
where for a specified coverage level $1-\alpha$,
\[
\sigma_{n+1}^2(t) = \widehat\var( \varepsilon^{(n)}(t)), \mbox{ and with }
 M = \sup_{t\in[0,1]} \frac{ |\varepsilon^{(n)}(t) | }{\sigma_{n+1}(t)},
\quad
L = Q_{\alpha/2}(  M),
\mbox{ and }
U = Q_{1-\alpha/2}(M).
\]

In the setting of scalar-on-function quantile regression, we compare to the method of \cite{sang:2020}, which entails for a scalar response $T(Y)$ modelling
\[
T(Y) = g\big( \beta_0 + \langle x, \beta \rangle \big)+ \varepsilon,
\]
where $g$ is an assumed to be unknown link function. The link function $g$ as well as the parameter function $\beta$ are assumed to be linear combinations of splines, and estimated in order to estimate the level $p$ quantile of $T(Y)$ by  minimizing the check function loss
\[
\rho_p(y) = \big(p - \mathds{1}\{y \leq 0\}\big) \, y,
\]
subject also to a roughness penalty on the functions $g$ and $\beta$.

\subsection{Construction of prediction sets}\label{s:pred}
 Following the simulation experiment considered in \cite{paparoditis:2020}, we construct a time series of continuous functions as follows:
\begin{align}
Y_k(t) = \int_0^1 \rho(t,s) Y_{k-1}(s) ds + b\cdot Y_{k-2}(t) + B_k(t), \quad 1\leq k \leq n, \; t\in[0,1],
\label{e:paparoditis}
\end{align}
where $\rho(t,s) = 0.34\, e^{(t^2+s^2)/2}$, and $B_k$ is a standard Brownian motion.
We fit an FAR(1) model to each simulated sample where
we chose the truncation parameter $T_n$ using the PVE criterion with $v=0.85$. This is the same value as used in  \cite{paparoditis:2020}.
If $b$ is chosen as $0$, the FAR(1) model is correctly specified, whereas with $b=0.4$, there is a model misspecification that should be detrimental
to the quality of the model predictions. Following the method proposed in \cite{paparoditis:2020} and as described above, we constructed uniform prediction sets  to forecast each series 1-step ahead, with nominal coverage probabilities of 80\% and 95\%. This was repeated independently 1000 times, with sample sizes $n \in \{ 100,200,400,800\}$. While the model for the forecast is the same for both methods, the difference between our approach and \cite{paparoditis:2020} is in the methods used to estimate the noise distribution $\varepsilon^{(n)}$. These results are summarised in Table~\ref{tab:paparoditis} in terms of empirical coverage probabilities from the 1000 replications.

In the case $n=100$ and $b=0$, the method {\bf boot} yielded empirical coverage probabilities that were up to 4--6 percentage points below the method of \cite{paparoditis:2020}, which are both below the nominal level. Apart from this notable exception, the empirical coverage probabilities are comparable to those of \cite{paparoditis:2020}, and were closer to nominal coverage in 10 out of 16 cases considered. The results of {\bf Gauss} were generally better, which is to be expected since the model errors are Gaussian processes, especially for the nominal coverage probability of 95\%.

\begin{table}
\centering
{\small
\begin{tabular}{llcccccccc}
  \hline
\multicolumn{2}{l}{Nominal} & \multicolumn{2}{c}{$n=100$} & \multicolumn{2}{c}{$n=200$} & \multicolumn{2}{c}{$n=400$} & \multicolumn{2}{c}{$n=800$} \\
\multicolumn{2}{l}{coverage} & $b=0$ & $b=0.4$ & $b=0$ & $b=0.4$ & $b=0$ & $b=0.4$ & $b=0$ & $b=0.4$ \\
  \hline
  80\% & {\bf boot}       &         0.683  &         0.694  &         0.745  &         0.754  &         0.777  &         0.778  &         0.789  &         0.791  \\ %
       & {\bf Gauss}       &         0.703  & \textbf{0.716} &         0.756  & \textbf{0.763} &         0.781  & \textbf{0.783} &         0.791  & \textbf{0.793} \\ %
 & \textit{P., S. (2020)} & \textbf{0.740} &         0.689  & \textbf{0.766} &         0.740  & \textbf{0.791} &         0.768  & \textbf{0.803} &         0.786  \\ %
  \hline
  95\% & {\bf boot}       &         0.861  &         0.872  &         0.913  &         0.917  &         0.933  &         0.936  &         0.944  &         0.944  \\ %
       & {\bf Gauss}       &         0.898  & \textbf{0.904} & \textbf{0.927} & \textbf{0.931} & \textbf{0.940} & \textbf{0.943} & \textbf{0.946} & \textbf{0.946} \\ %
 & \textit{P., S. (2020)} & \textbf{0.902} &         0.856  &         0.918  &         0.899  &         0.927  &         0.913  &         0.936  &         0.924  \\ %
   \hline
\end{tabular}
}
\caption{\label{tab:paparoditis} Empirical coverage probabilities of uniform prediction intervals for the data generating process \eqref{e:paparoditis} calculated via {\bf boot}, as well as using the method of \cite{paparoditis:2020}, abbreviated  \textit{P., S. (2020)}. }
\end{table}

\subsection{Comparison to functional GLM and Nadaraya--Watson estimation}\label{ss:pm10data}

\begin{figure}[t]
\begin{minipage}[b]{0.60\textwidth}
\includegraphics[width=\textwidth]{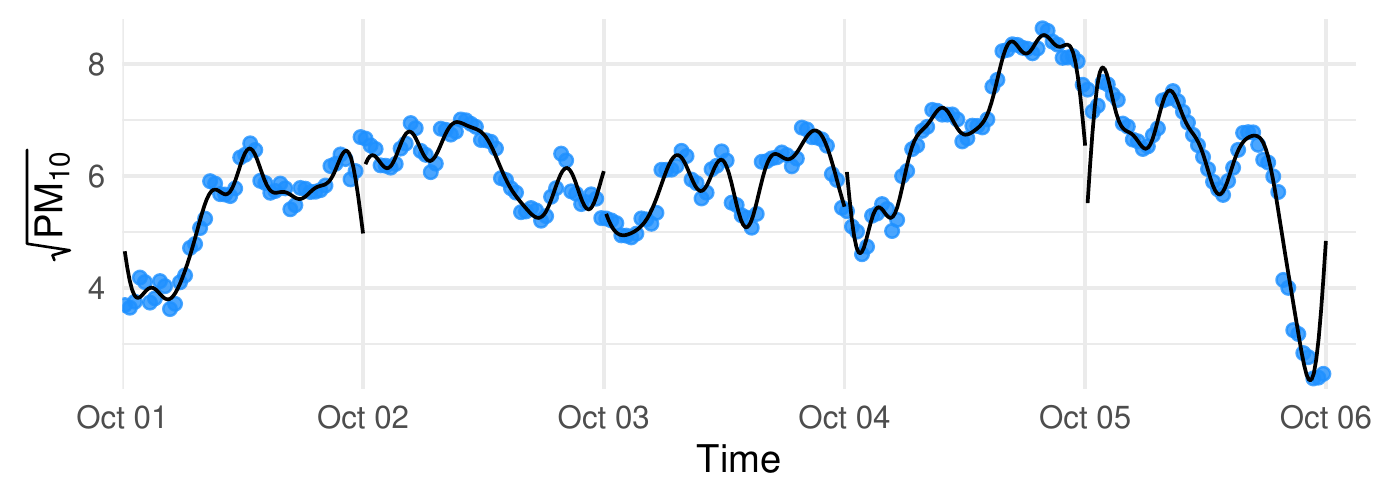}
\includegraphics[width=\textwidth]{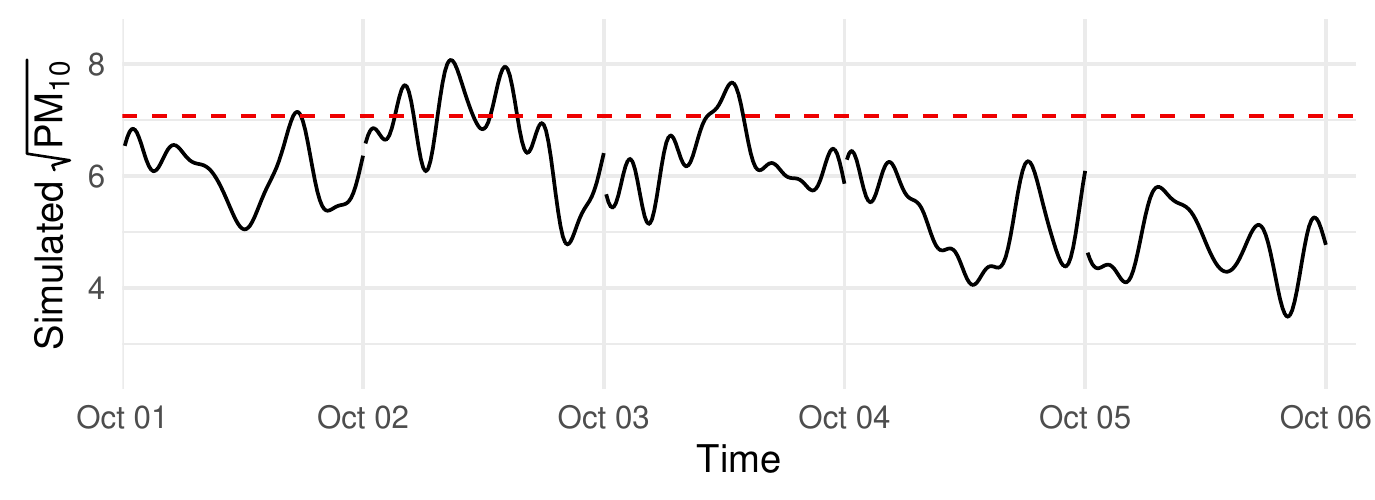}
\end{minipage}
\includegraphics[width=0.39\textwidth]{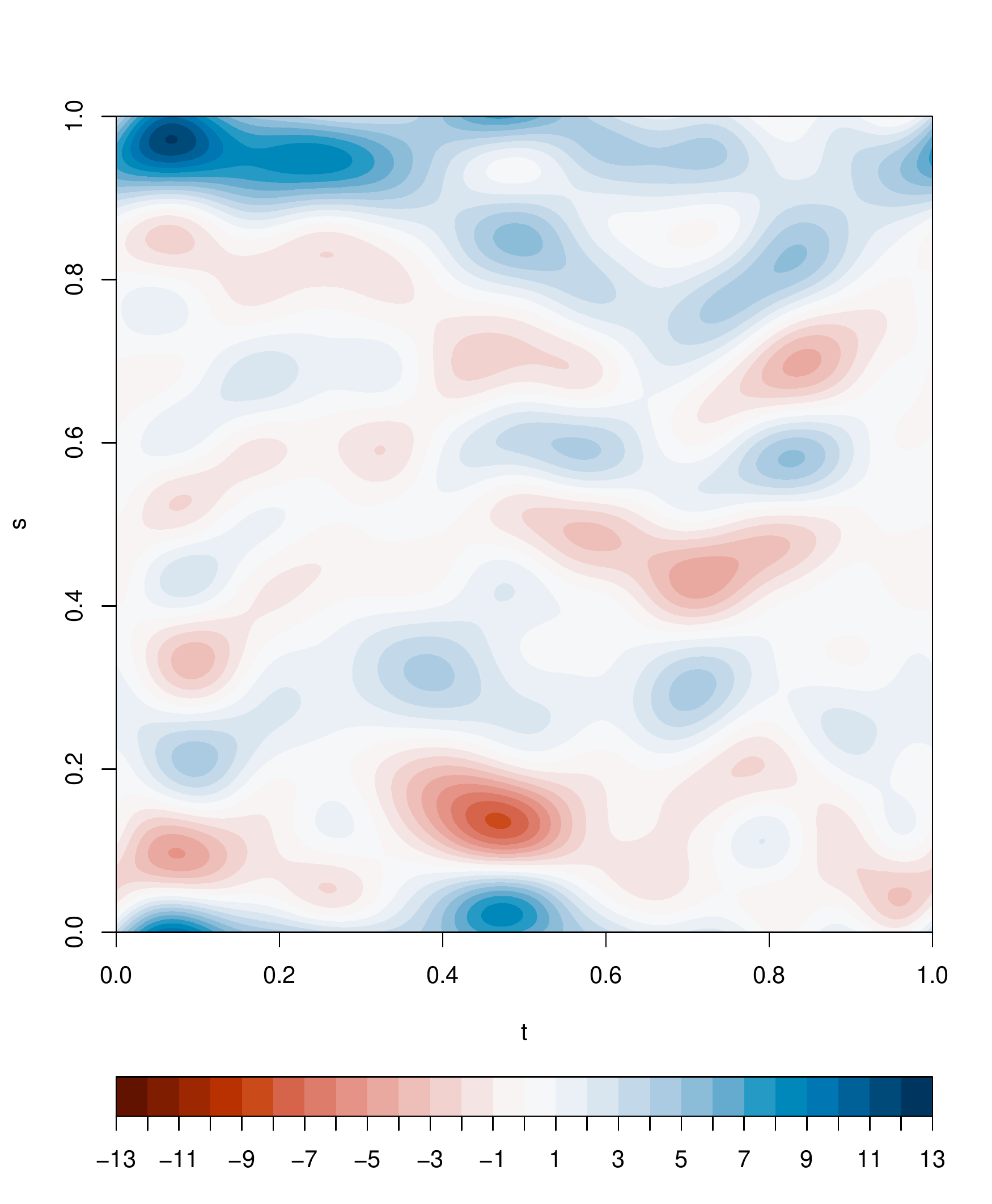}
\caption{Top left: the raw PM$_{10}$ measurements (blue) with the fitted curves (black).
Bottom left: simulated synthetic PM$_{10}$ data (black) with $\alpha = \sqrt{50}$ (red) that we considered in the level set case.
Right: the kernel operator $\varrho(t,s)$ used in the data generating process.}
\label{fig:PM10DGP}
\end{figure}

In this simulation experiment, we generated synthetic data
under model \eqref{mod} in such a way that it resembled a real functional time series derived from daily square-root transformed
PM$_{10}$ concentration curves constructed by smoothing half-hourly measurements of PM$_{10}$.
This is done using the function {\tt Data2fd} in the {\tt fda} package with default settings; see \cite{fdapackage}.
PM$_{10}$ concentration denotes the concentration in air of respirable coarse particles
having a diameter less than 10$\mu m$, and the data that we consider was collected in Graz, Austria over the period from  October 1st, 2010 to March 31st, 2011. An illustration of these data is given in Figure~\ref{fig:PM10DGP}, and they are available in the {\tt ftsa} package in {\tt R}; see \cite{hynd:shang:ftsa:2020}.

We use these data as a means to devise a realistic data generating process.
To this end, we first fit an FAR$(1)$ model to the square-root transformed PM$_{10}$ curves.
The estimator for the FAR operator $\varrho$ obtained in this way differs from operators typically used in simulation settings in that the estimator for the kernel is highly asymmetric, as illustrated in the right hand panel of Figure~\ref{fig:PM10DGP}.

With the estimated sample mean and the fitted FAR operator, we then generate synthetic FAR$(1)$ time series samples by drawing model errors $\varepsilon_k$ from a Gaussian distribution, with the covariance operator estimated from the residuals of the FAR(1) model fit to the original data. This can be done as in the algorithm {\bf Gauss}. The first 30 observations are dropped as part of the burn-in phase.  A snapshot of the raw data in comparison to the synthetic data can be seen in Figure~\ref{fig:PM10DGP}. In this manner we may generate time series of arbitrary sample sizes that are similar to the original PM$_{10}$ data. We generated 1000 independent samples for each sample size $n \in \{50, 100, 250, 1000\}$. Then, for 50 different values of predictors $Y_0^*$, simulated independently from the stationary distribution of the data generating process,  we estimated the conditional probability of $Y^*_{1}$ lying in the level set $P(\lambda(Y_{1}^* > \sqrt{50}) \leq 0.5| Y_0^*)$ for each such sample. For each of the 50 predictors, we also approximated the true probability using Monte-Carlo simulation ($n_{\text{MC}} = 10000$) from the data generating process.

We compared the estimators from algorithms {\bf boot} and {\bf Gauss}, as well as from a logistic functional GLM, and Nadaraya--Watson estimation.
The number $T_n$ of principal components used to estimate $\varrho$ was chosen using  criterion \eqref{e:chooseTn}, so that
\[
T_n = \max\big\{j \geq 1 \colon \hat \lambda_j \geq m_n^{-1} \hat \lambda_1 \big\},
\quad \text{with }\; m_n = 5  n^{0.45}.
\]
We introduce $\hat \lambda_1$ into  the definition of $T_n$ so that the criterion does not depend on the scale of the eigenvalues, yielding a more practicable way of choosing $T_n$.
For $n=1000$, this approximately covers 98\% of the variance of the simulated curves in the sense of the PVE criterion.
Naturally, less variance is covered in for smaller sample sizes.
For the logistic GLM, we used the approach suggested by \cite{muller:stadtmuller:2005}
and took the truncated Karhunen--Loève expansion as the predictor.
In order to keep the methods comparable,
we used the same number $T_n$ of principal components for our algorithms and for the functional GLM.
We calibrated the bandwidth $h$ for the Nadaraya--Watson estimator using leave-one-out cross-validation on each generated sample.
The results in terms of the root mean squared error (RMSE) over the 1000 simulations are displayed in Figure~\ref{fig:PM10RMSElevel}.
Because it is difficult to visualize this for the 50 different predictors, we present boxplots  summarizing the RMSE of each method over all predictors $Y_0^*$.  More details on the results for a variety of specific values of $Y_0^*$ can also be found in Table~\ref{tab:pm10rmselevel} in the Appendix.

\begin{figure}
\includegraphics[width=\textwidth]{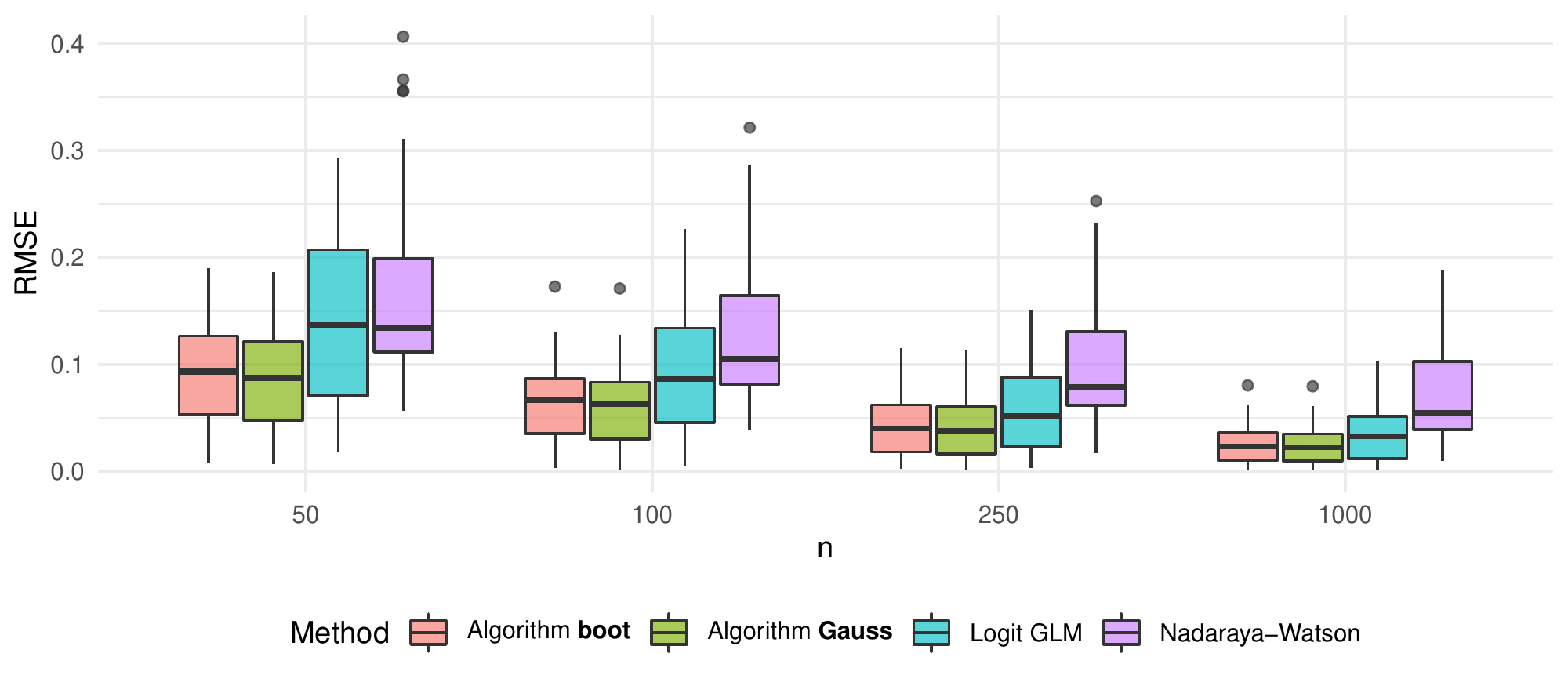}
\caption{ RMSE of $\hat P$ for 50 random predictors $Y_0^*$ and 1000 independent simulations of samples of size $n\in \{ 50, 100, 250, 1000 \}$ based on the estimators {\bf boot}, {\bf Gauss}, functional logistic regression, and Nadaraya--Watson estimators of the probability $P(\lambda(Y_{1}^* > \sqrt{50}) \leq 0.5| Y_0^*)$.}
\label{fig:PM10RMSElevel}
\end{figure}

We observed that algorithms~{\bf boot}  and  {\bf Gauss} exhibited similar predictive performance in both examples and over all  sample sizes. These methods clearly outperformed functional logistic regression and Nadaraya--Watson estimation in estimating the conditional probability of level sets. The proposed methods achieved a similar mean squared error in this case to functional logistic regression with about a quarter of the sample size.
The performance of the Nadaraya--Watson estimator was poor compared to the other methods considered in both cases
and varied strongly depending on the predictor $Y_0^*$. In Appendix~\ref{s:additionalsim}, we also present results in which we considered contrast sets rather than level sets, in which case the same overall pattern was observed, although the results were more comparable across the four methods.

\begin{table}
\centering
\begin{tabular}{|r|rr|rr|rr|rr|}
  \hline
 & \multicolumn{2}{c|}{$n=50$} & \multicolumn{2}{c|}{$n=100$} & \multicolumn{2}{c|}{$n=250$} & \multicolumn{2}{c|}{$n=1000$} \\
 & \multicolumn{2}{c|}{$p=0.98$} & \multicolumn{2}{c|}{$p=0.99$} & \multicolumn{2}{c|}{$p=0.996$} & \multicolumn{2}{c|}{$p=0.999$} \\
$Y_0^*$ & \textbf{boot} & \textbf{Gauss} & \textbf{boot} & \textbf{Gauss} & \textbf{boot} & \textbf{Gauss} & \textbf{boot} & \textbf{Gauss} \\
  \hline
  1 & 0.770 & 0.704 & 0.559 & 0.491 & 0.457 & 0.356 & 0.487 & 0.434 \\
  2 & 0.530 & 0.437 & 0.396 & 0.292 & 0.333 & 0.170 & 0.341 & 0.189 \\
  3 & 0.517 & 0.419 & 0.432 & 0.287 & 0.348 & 0.199 & 0.308 & 0.137 \\
  4 & 0.595 & 0.501 & 0.464 & 0.371 & 0.404 & 0.262 & 0.347 & 0.187 \\
  5 & 0.589 & 0.480 & 0.470 & 0.355 & 0.378 & 0.247 & 0.336 & 0.144 \\
   \hline
\end{tabular}
\caption{\label{tab:pm10varlevel} RMSE for $\hat\alpha_p$, 5 different predictors and 1000 replications. We estimate $\hat\alpha_p$ such that $P(\lambda(Y_{1}^* > \alpha_p)) \leq 0.5 | Y_0^*) = p$, where $p = 1-n^{-1}$.}
\end{table}

Although the estimator {\bf Gauss} performs similarly to {\bf boot} in the above example, it can be expected that {\bf boot} runs into problems when $P(Y \in A|X)$ is close to $0$ or $1$, since {\bf boot} only uses the $n$ estimated model residuals  to estimate $P(Y \in A|X)$, whereas in producing the estimator \textbf{Gauss},
one can generate a Monte-Carlo sample of residuals as large as needed to give a non-degenerate estimate of these  probabilities, which can be expected to be accurate if the Gaussian assumption is plausible.  To highlight this, we present the results of a short simulation study in which for a probability $p_n=1-1/n$, we aimed to estimate $\alpha_p$  using {\bf boot} and {\bf Gauss}  such that $P(\lambda(Y_{1}^* > \alpha_{p_n}) \leq 0.5 | Y_0^*) = p_n$. This problem is hence related to the  Value-at-Risk estimation. We compared the RMSE of $\hat\alpha_{p_n}$ from the two algorithms for 50 different realizations of the predictor $Y_0^*$ that were simulated from the same data generating process. We note that the value of $\alpha_p$ varies between 7.26 and 11.77, depending on $Y_0^*$ and $p_n$. In Table~\ref{tab:pm10varlevel}, we present the results from a subset of five predictors $Y_0^*$ that were representative of the variability observed in the simulated series. It is apparent from these results that {\bf Gauss} outperforms {\bf boot} in all cases, and the relative advantage increases with sample size.
If we look at the results for all 50 predictors, RMSE of $\hat\alpha_p$ decreases by about 15\% for $n=50$, 22\% for $n=100$, 35\% for $n=250$ and 42\% for $n=1000$. This gives some indication of the difference in performance that can be expected between the two methods in forecasting extreme quantiles or events whenever the Gaussian assumption is plausible.

\subsection{Functional quantile regression}\label{ss:quantilereg}

In this application, we compare to the data analysis of \cite{sang:2020}.
As in our previous example, those authors consider the functional time series of daily square-root transformed PM$_{10}$ concentration curves constructed by smoothing half-hourly measurements of PM$_{10}$.

The goal of the analysis is to compare forecasts of the quantiles of the maximum values $M_t = \max_{u\in[0,1]} Y_{t}(u)$ (we remark the relation to Example~\ref{E:extremalsets}), where $Y_t(u)$ is the transformed PM$_{10}$ curve on day $t$ at intraday time $u$. As the covariate the curve $Y_{t-1}$ is used. Now we model the relationship between $(Y_t,Y_{t-1})$ by a FAR(1) process and apply the method {\bf boot} to estimate the conditional quantile of $M_t$.
We select the truncation parameter $T_n$ in order to explain 98\% of the variance in the variables $Y_t$
since for this fixed sample size, tuning $T_n$ by an asymptotic criterion is not meaningful.
At a quantile level $p$, we compared these methods by
5-fold cross-validation the mean
check-function loss $\rho_p\big( M_i - \hat q_p(M_i|Y_{i-1})$.
We did this for seven different quantile levels $p \in \{0.05, 0.15, 0.25, 0.50, 0.75, 0.85, 0.95\}$. The experiment was repeated on 50 random splits of the data set. The results are displayed in Figure~\ref{fig:sangcao}. In 87.4\% of the cases, {\bf boot} outperformed the
functional single-index quantile regression model of \cite{sang:2020} in terms of the loss considered.
This advantage was much smaller for more central quantiles and became more apparent for the more extreme quantiles.

\begin{figure}[t]
\centering
\includegraphics[width=0.90\textwidth]{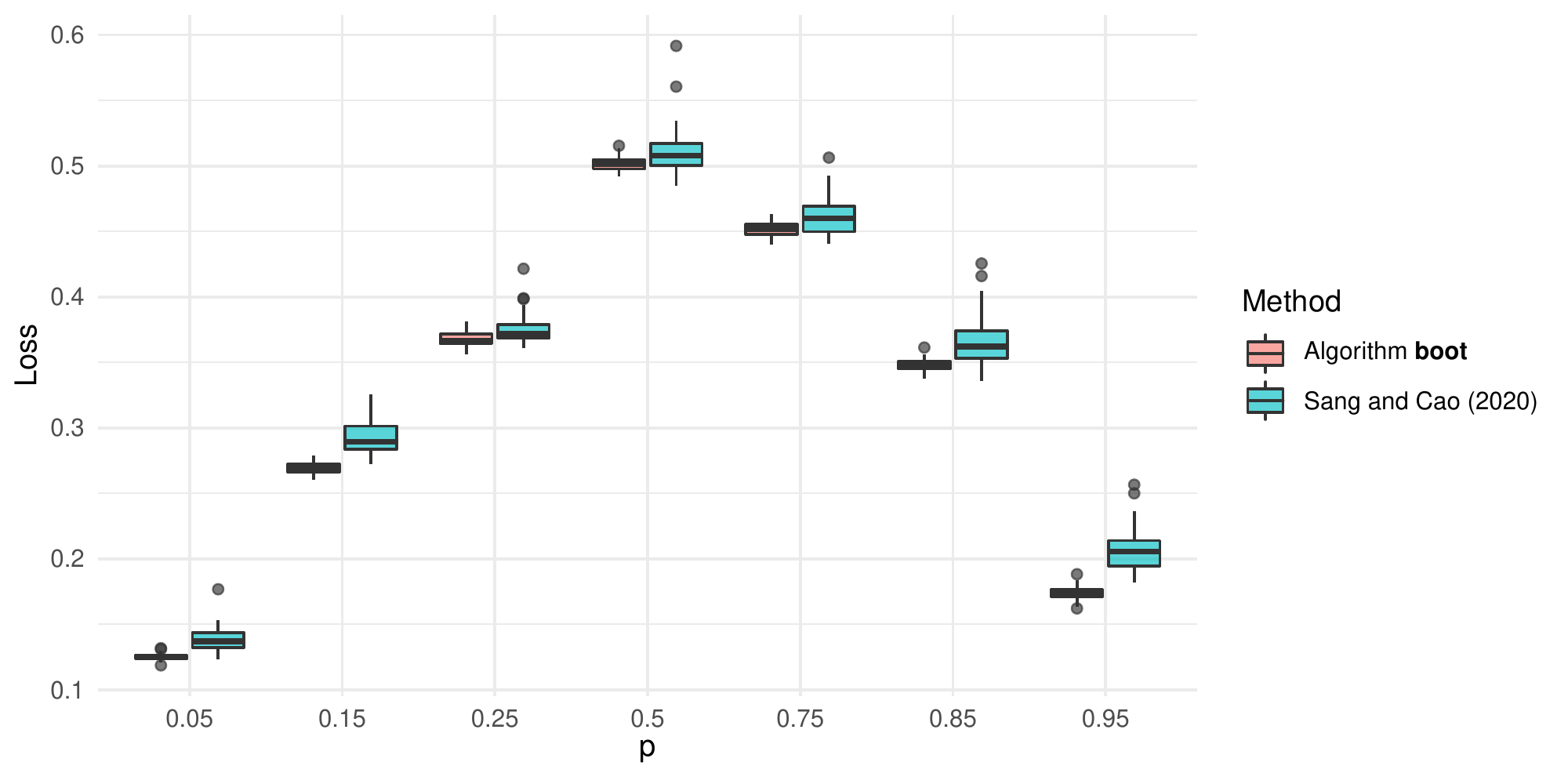}
\caption{Performance of {\bf boot} compared to the
functional single-index quantile regression model proposed by \cite{sang:2020}. The prediction error is compared using 5-fold cross-validation on 50 random splits of the PM$_{10}$ data set.}\label{fig:sangcao}
\end{figure}

\subsection{Spanish electricity price data}\label{ss:electricitydata}

\begin{table}
\centering
\begin{tabular}{lrrrrrrrrrr}
  \hline
      &     $\alpha$    & \hspace{5mm} 30 & 35 & 40 & 45 & 50 & 55 & 60 & 65 & 70 \\
  \hline
      &  \textbf{boot}  & \textbf{0.03} & \textbf{0.06} & \textbf{0.10} & \textbf{0.13} & \textbf{0.16} & \textbf{0.24} &         0.23  & \textbf{0.19} & \textbf{0.16} \\
$z=0$ &  GLM            &         0.11  &         0.17  &         0.23  &         0.36  &         0.30  &         0.24  & \textbf{0.22} &         0.21  &         0.24  \\
      &  N--W           &         0.05  &         0.10  &         0.20  &         0.21  &         0.22  &         0.31  &         0.29  &         0.27  &         0.26  \\
  \hline
      &  \textbf{boot}  & \textbf{0.05} & \textbf{0.07} & \textbf{0.10} & \textbf{0.15} & \textbf{0.15} & \textbf{0.19} & \textbf{0.17} & \textbf{0.16} & \textbf{0.12} \\
$z=\frac 16$ &  GLM     &         0.22  &         0.18  &         0.22  &         0.25  &         0.18  &         0.19  &         0.17  &         0.19  &         0.24  \\
      &  N--W           &         0.08  &         0.14  &         0.18  &         0.22  &         0.25  &         0.26  &         0.23  &         0.22  &         0.18  \\
  \hline
      &  \textbf{boot}  & \textbf{0.05} & \textbf{0.10} & \textbf{0.12} & \textbf{0.13} & \textbf{0.20} & \textbf{0.17} & \textbf{0.15} & \textbf{0.13} & \textbf{0.09} \\
$z= \frac 26$ &  GLM    &         0.25  &         0.15  &         0.26  &         0.23  &         0.23  &         0.17  &         0.23  &         0.20  &         0.39  \\
      &  N--W           &         0.10  &         0.16  &         0.23  &         0.23  &         0.26  &         0.22  &         0.24  &         0.22  &         0.14  \\
  \hline
      &  \textbf{boot}  & \textbf{0.08} & \textbf{0.10} & \textbf{0.11} & \textbf{0.17} & \textbf{0.20} & \textbf{0.17} & \textbf{0.15} & \textbf{0.11} &         0.07  \\
$z= \frac 36$ &  GLM    &         0.11  &         0.15  &         0.26  &         0.27  &         0.23  &         0.19  &         0.18  &         0.20  &         0.37  \\
      &  N--W           &         0.14  &         0.17  &         0.26  &         0.24  &         0.27  &         0.23  &         0.23  &         0.17  & \textbf{0.07} \\
  \hline
      &  \textbf{boot}  & \textbf{0.09} & \textbf{0.11} & \textbf{0.14} & \textbf{0.19} & \textbf{0.22} & \textbf{0.18} & \textbf{0.12} & \textbf{0.08} &         0.03  \\
$z= \frac 46$ &  GLM    &         0.12  &         0.18  &         0.27  &         0.26  &         0.25  &         0.24  &         0.20  &         0.40  &         0.12  \\
      &  N--W           &         0.16  &         0.20  &         0.23  &         0.26  &         0.28  &         0.25  &         0.19  &         0.13  & \textbf{0.02} \\
  \hline
      &  \textbf{boot}  & \textbf{0.14} & \textbf{0.17} & \textbf{0.20} & \textbf{0.24} & \textbf{0.23} & \textbf{0.14} & \textbf{0.08} & \textbf{0.02} & \textbf{0.00} \\
$z= \frac 56$ &  GLM    &         0.21  &         0.26  &         0.25  &         0.26  &         0.27  &         0.17  &         0.21  &         0.13  &         0.10  \\
      &  N--W           &         0.24  &         0.29  &         0.29  &         0.31  &         0.29  &         0.19  &         0.09  &         0.03  &         0.00  \\
   \hline
\end{tabular}
\caption{\label{tab:spaincrossentropy} The cross-entropy of the estimated conditional probability $P(\lambda(Y >\alpha) \leq z)$ for different values $\alpha$ and $z$, evaluated on the test set. The comparison value GLM is a logit regression model with the same predictors. N--W is the Nadaraya--Watson estimator. The smallest value in each cell is marked in bold font, and any apparent ties are merely a result of the rounding to two digits.}
\end{table}

We now return to the Spanish electricity price data that we gave as an introductory example. We take as the goal of this analysis to compare estimates for the conditional probability that the price curves will lie in specified level sets, given the covariates of demand, and wind energy production. In order to compare the various methods for doing this, we split the data into a training and testing set  by randomly taking four months from each year, and assigning them to the test set, which created a 2:1 split between the training and testing set.  Since we used 6 years of this data, the training set thus consists of 1453 days,
and the test set consists of 731 days.
Let $Z_t$ denote one of functional variables electricity price, demand or wind energy production. Then these variables were deseasonalized as follows:
\[
\widetilde Z_t = Z_t - Z_t^{(Y)} - Z_t^{(W)},
\]
where $Z_t^{(Y)}$ is the yearly seasonality obtained
by taking the mean for each day of the year and smoothing the result
using a rolling mean with a window size of 21 days.
$Z_t^{(W)}$ is the weekly seasonality that is estimated as the mean for each day of the week.
For the wind curves, no weekly seasonality was removed. In order to employ the methods {\bf boot} and {\bf Gauss}, we fit the FARX(7) model described in \eqref{e:spainfarx} using the estimator introduced in Section~\ref{s:theoretical} with the data in the training set. The truncation parameter $T_n$ was again chosen in order to explain 98\% of the variance of the covariates.

In order to compare the estimated conditional probabilities to the realized outcomes on the test set, we used the cross-entropy measure.
The cross-entropy of a distribution $P$ relative to a distribution $Q$
is defined as
$H(P,Q) = -\mathbb{E}_P\big[ \log(q(Y)) \big]$,
where $q$ is the probability mass function of $Q$; see Section 2.8 of \cite{murphy2012machine}. Given the realisations $y_i = \mathds{1}\{Y_i \in A\}$, $i \in \{1,...,N\}$, and corresponding estimated conditional probabilities $\hat{p}_i= \hat{P}(Y_i \in A | X_i)$ in the testing set of size $N$, the plug-in estimator of the cross-entropy estimated on the test set is
\[
\hat{H}(\hat{P}) = -\frac 1 N \sum_{k=1}^N [y_k \log\big( \hat{p}_k \big)+ (1-y_k) \log\big(1- \hat{p}_k \big)].
\]

We considered level sets of the form $A= \{ y \in C[0,1] : \lambda(Y > \alpha) \leq z)\}$ for various values of $\alpha$ and $z$, and calculated the cross-entropy on the test set of estimates of $P(Y \in A |X)$ using the method {\bf boot}, as well as for functional logistic regression, which was estimated using the same covariates (and PVE criterion) as those considered in generating the estimator in {\bf boot}, as well as functional Nadaraya--Watson estimation with a Gaussian kernel with the predictors
\emph{demand}, \emph{wind} and \emph{lagged price}, and the bandwidth parameters were selected using leave-one-out cross-validation on the training set. We do not present the results for the method {\bf Gauss}, as the results are again very similar to the method {\bf boot}. The estimated cross entropies on the test set for each set $A$ considered are presented in Table~\ref{tab:spaincrossentropy}. The smallest value in each cell is marked in bold font.

The method {\bf boot} achieved lower values of cross-entropy on the test set compared to the competing methods for most combinations of $\alpha$ and $z$. {\bf boot} had higher estimated cross-entropy in one case compared to functional logistic regression model,
and two cases compared to functional Nadaraya--Watson estimation. The values of $\alpha$ and $z$ considered were chosen such in a way
that most price curves belong to at least one set, and so we do not think this superior performance resulted from the sets $A$ focusing on outcomes that are well modelled using the FARX(7) model.

\section{Summary}

We considered two methods, based on either a residual bootstrap or Gaussian process simulation, to estimate the conditional distribution $P(Y\in A|X)$, where $Y$ and $X$ satisfy the functional linear regression model \eqref{mod}. We showed under mild consistency conditions on the estimated regression operator $\hat{\varrho}_n$ that these methods lead to consistent estimation, in particular in the setting where $Y$ is assumed to be an element of the Banach space $C[0,1]$, which allows for the consideration of sets $A$ that describe more detailed path properties of the response. We put forward one example of an operator estimator $\hat{\varrho}_n$ that has the specified consistency properties under natural regularity conditions on the covariates, which allow for weak serial dependence, and on the choice of tuning parameters. In several simulation experiments and data analyses we observed that these methods generally outperformed prominent competitors, which are often more complicated to implement.

\addcontentsline{toc}{section}{References}
\bibliography{CDbib}
\bibliographystyle{abbrvnat}

\section{Proofs}\label{s:proofs}
\FloatBarrier

Below for a sequence of random variables $X_n$ taking values in a metric space, we use the notation $X_n \stackrel{d}{\to}X$ to denote weak convergence (convergence in distribution). We let $K_*$ denote unimportant positive constants that may change between uses. We state a number of Lemmata, and if their proof is not immediately given, then it is given in Appendix~\ref{a:additionalproofs}.
\begin{lemm}\label{l:1}
Let $(\varepsilon_k)_{k\geq 1}$ be the i.i.d.\ noise sequence of model~\eqref{mod}
and let $(X,Y)$ be drawn independently  from the same regression model.
Then
\begin{equation}\label{e:ET}
\frac{1}{n}\sum_{k=1}^n \mathds{1}\{\varrho(X)+\varepsilon_k\in A\}\stackrel{\mathrm{a.s.}}{\to} P(Y\in A|X),\quad (n\to\infty).
\end{equation}
\end{lemm}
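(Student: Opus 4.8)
The plan is to recognize \eqref{e:ET} as a conditional strong law of large numbers, and then to upgrade the ``for almost every value of $X$'' conclusion to a genuine almost sure statement on the joint probability space by a routine Fubini argument.

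First I would fix the probabilistic setup. By Assumption~\ref{a:errorind} and the hypothesis that $(X,Y)$ is drawn independently of the noise sequence, the random element $X$ and the i.i.d.\ sequence $(\varepsilon_k)_{k\geq 1}$ are independent, and each $\varepsilon_k$ has the same law as the model error $\varepsilon := Y-\varrho(X)$, which by Assumption~\ref{a:errorind} is itself independent of $X$. Hence we may realize $X$ and $(\varepsilon_k)_{k\ge1}$ on a product space $(\Omega_1\times\Omega_2,\mathcal F_1\otimes\mathcal F_2,\mu_1\otimes\mu_2)$ with $X=X(\omega_1)$ and $\varepsilon_k=\varepsilon_k(\omega_2)$. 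Define
\[
g(x):=P\big(\varrho(x)+\varepsilon_1\in A\big),\qquad x\in H_1 .
\]
This is well defined and measurable, because $x\mapsto\varrho(x)$ is continuous (being a bounded linear operator), $A$ is measurable, and $g(x)$ is obtained by integrating the jointly measurable indicator $\mathds 1\{\varrho(x)+\varepsilon_1(\omega_2)\in A\}$ over $\omega_2$ (Fubini--Tonelli). Since $\varepsilon\stackrel d=\varepsilon_1$ is independent of $X$, we have $P(Y\in A\mid X=x)=P(\varrho(x)+\varepsilon\in A)=g(x)$, so $g(X)$ is a version of $P(Y\in A\mid X)$.

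Next I would fix $\omega_1$ and set $x=X(\omega_1)$. Under $\mu_2$, the random variables $\big(\mathds 1\{\varrho(x)+\varepsilon_k\in A\}\big)_{k\geq1}$ are i.i.d.\ Bernoulli with mean $g(x)\in[0,1]$, so Kolmogorov's strong law of large numbers gives
\[
\frac1n\sum_{k=1}^n\mathds 1\{\varrho(x)+\varepsilon_k\in A\}\ \longrightarrow\ g(x)
\]
for $\mu_2$-almost every $\omega_2$. Let $N$ be the set of pairs $(\omega_1,\omega_2)$ for which this convergence fails. Then every section $N_{\omega_1}$ is $\mu_2$-null, and $N$ itself is measurable, being the set on which a sequence of measurable functions fails to converge to a measurable limit; hence by Fubini $(\mu_1\otimes\mu_2)(N)=\int\mu_2(N_{\omega_1})\,\mu_1(d\omega_1)=0$. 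Therefore $\frac1n\sum_{k=1}^n\mathds 1\{\varrho(X)+\varepsilon_k\in A\}\to g(X)=P(Y\in A\mid X)$ almost surely, which is exactly \eqref{e:ET}.

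I do not expect a genuine obstacle: the argument is essentially an application of the classical SLLN conditionally on $X$. The only points needing a word of care are the measurability of $g$ and of the exceptional set $N$, and the identification $g(X)=P(Y\in A\mid X)$, all of which follow from the independence of $\varepsilon$ and $X$ together with Fubini's theorem.
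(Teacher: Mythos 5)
Your proposal is correct, but it takes a genuinely different route from the paper's proof. You condition on $X$: for fixed $x$ the variables $\mathds{1}\{\varrho(x)+\varepsilon_k\in A\}$ are i.i.d.\ Bernoulli with mean $g(x)=P(\varrho(x)+\varepsilon_1\in A)$, the classical strong law gives convergence for $\mu_2$-almost every noise realization, and a Fubini argument on the product space upgrades this to almost sure convergence jointly in $(X,(\varepsilon_k)_{k\ge1})$, after checking measurability of $g$ and of the divergence set $N$ and identifying $g(X)$ as a version of $P(Y\in A|X)$ via the independence of the error from $X$. The paper instead works on the same canonical product space but applies Birkhoff's ergodic theorem to the shift that fixes the $X$-coordinate and shifts the error coordinates; the time average converges a.s.\ to $E[Z|\mathcal{I}]$, and the bulk of the paper's argument identifies the invariant $\sigma$-field $\mathcal{I}$ as $\sigma(X,\mathcal{T})$, with $\mathcal{T}$ the tail $\sigma$-field of the errors, which is trivial by Kolmogorov's zero--one law, giving $E[Z|\mathcal{I}]=P(Y\in A|X)$. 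Both arguments rest on exactly the same structural facts — $(\varepsilon_k)$ i.i.d.\ and independent of the fresh covariate $X$, plus joint measurability of $(x,e)\mapsto\mathds{1}\{\varrho(x)+e\in A\}$ — and your handling of the delicate points (measurability of $g$, measurability of the exceptional set, the disintegration argument) is sound. What your route buys is elementarity: only the scalar SLLN and Fubini are needed, with the conditional structure made explicit through $g$. What the paper's route buys is a single application of the ergodic theorem without an explicit disintegration over $x$, together with the (mildly interesting) by-product that the invariant $\sigma$-field of this shift equals $\sigma(X,\mathcal{T})$. Either way, your argument is a complete and valid proof of \eqref{e:ET}.
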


The following slightly modified version of the continuous mapping theorem
can be proven along similar lines as Theorem~1.9.5 of \cite{vaart:wellner:1996}.
\begin{lemm}\label{e:CMTmod}
Let $X_n,Y_n$, $n\geq 1$, and $Y$ be random elements of a metric space. Suppose that $X_n-Y_n\convP 0$ if $n\to\infty$ and that $Y_n\stackrel{d}{=} Y$ for all $n$. Let $g:\mathcal{S}\to\mathbb{R}$ and let $D_g$ be the set of discontinuity points of $g$. Then if $P(Y\in D_g)=0$, we have $g(X_n)-g(Y_n)\convP 0$.
\end{lemm}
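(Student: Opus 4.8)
The plan is to reproduce the classical continuous mapping argument, with the ``limit'' variable replaced by the companion sequence $Y_n$; the point is to control the local oscillation of $g$. Write $d$ for the metric on $\mathcal{S}$, so that the hypothesis reads $d(X_n,Y_n)\convP0$ (when $\mathcal{S}$ is normed this is just $\|X_n-Y_n\|\convP0$). Fix $\epsilon>0$. For $y\in\mathcal{S}$ and $\delta>0$ I would set $\omega(y,\delta)=\sup\{|g(z)-g(y)|\colon d(z,y)\le\delta\}$ and $A_\delta=\{y\in\mathcal{S}\colon \omega(y,\delta)>\epsilon\}$. Since $\omega(y,\cdot)$ is non-decreasing, the sets $A_\delta$ decrease as $\delta\downarrow0$, and any point of $\bigcap_{\delta>0}A_\delta$ admits $z_\delta\to y$ with $|g(z_\delta)-g(y)|>\epsilon$, hence is a discontinuity point of $g$; thus $\bigcap_{\delta>0}A_\delta\subseteq D_g$. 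Combining $P(Y\in D_g)=0$ with continuity from above of the law of $Y$ gives $P(Y\in A_\delta)\to0$ as $\delta\downarrow0$, so given $\eta>0$ I would fix $\delta>0$ with $P(Y\in A_\delta)<\eta$.

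The estimate is then immediate: for this $\delta$,
\[
P\big(|g(X_n)-g(Y_n)|>\epsilon\big)\le P\big(d(X_n,Y_n)>\delta\big)+P\big(|g(X_n)-g(Y_n)|>\epsilon,\ d(X_n,Y_n)\le\delta\big).
\]
On $\{d(X_n,Y_n)\le\delta\}$ one has $|g(X_n)-g(Y_n)|\le\omega(Y_n,\delta)$, so the second event forces $Y_n\in A_\delta$; since $Y_n\stackrel{d}{=}Y$, that term is at most $P(Y\in A_\delta)<\eta$, while the first term vanishes because $d(X_n,Y_n)\convP0$. Hence $\limsup_n P(|g(X_n)-g(Y_n)|>\epsilon)\le\eta$, and letting $\eta\downarrow0$ and then varying $\epsilon>0$ yields $g(X_n)-g(Y_n)\convP0$. (A quicker route: $Y_n\stackrel{d}{=}Y$ gives $Y_n\convd Y$, so $d(X_n,Y_n)\convP0$ and Slutsky yield $(X_n,Y_n)\convd(Y,Y)$ on $\mathcal{S}\times\mathcal{S}$; applying the ordinary continuous mapping theorem to $(x,y)\mapsto g(x)-g(y)$, whose discontinuity set meets the diagonal only within $\{(y,y)\colon y\in D_g\}$, then finishes it.)

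I expect the main obstacle to be purely measure-theoretic, handled exactly as in Theorem~1.9.5 of \cite{vaart:wellner:1996}: the oscillation $\omega(\cdot,\delta)$ is a supremum over an uncountable family, so $A_\delta$ need not be Borel and the displayed events need not be measurable, and they should be read with outer probabilities (in the separable setting relevant here $A_\delta$ is a projection of a Borel set, hence universally measurable, and ordinary continuity from above applies to the completed law of $Y$). This bookkeeping costs nothing substantive, since outer probability is monotone and finitely subadditive, $d(X_n,Y_n)\convP0$ still gives $P^*(d(X_n,Y_n)>\delta)\to0$, and $Y_n\stackrel{d}{=}Y$ transfers to outer probabilities. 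The one genuinely new ingredient relative to the classical statement is the use of the exact identity $Y_n\stackrel{d}{=}Y$ rather than only $Y_n\convd Y$: it is what lets us bound $P(Y_n\in A_\delta)$ by the $n$-free quantity $P(Y\in A_\delta)$ without needing $A_\delta$ to be a continuity set of the law of $Y$.
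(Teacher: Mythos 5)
Your argument is correct and is exactly the route the paper intends: the lemma is stated there with a pointer to the proof of Theorem~1.9.5 of \cite{vaart:wellner:1996}, i.e.\ the oscillation-set/outer-probability continuous-mapping argument that you reproduce, with the identity $Y_n\stackrel{d}{=}Y$ used precisely as you do to replace the weak limit. (If you wish to avoid the universal-measurability digression, defining the oscillation set via the diameter of $g$ over open balls makes $A_\delta$ open, but this is cosmetic.)
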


\medskip

\noindent\textbf{Proof of Theorem~\ref{t:main1}.} We note that by Lemma~\ref{l:1}, it suffices to show
\begin{equation}\label{e:toshow}
\frac{1}{n}\sum_{k=1}^n \left(\mathds{1}\{\hat\varrho_n(X)+\hat\varepsilon_{k,n}\in A\}-\mathds{1}\{\varrho(X)+\varepsilon_k\in A\}\right)\convP 0.
\end{equation}
To this end we define random variables $K=K_n$ which are uniformly distributed on $\{1,\ldots, n\}$ and independent of $\{(Y_k,X_k)\colon k\geq 1\}$ and $(Y,X)$. A consequence of Assumption~\ref{a:consist}~\ref{a:consistIn} is that
\[
\hat\varepsilon_{K,n}-\varepsilon_K=Y_K-\hat\varrho_n(X_K)-(Y_K-\varrho(X_K))=\varrho(X_K)-\hat\varrho_n(X_K)\convP 0.
\]
Set $V_{K,n}:=\hat\varrho_n(X)+\hat\varepsilon_{K,n}$ and $V_{K,0}:= \varrho(X)+\varepsilon_K.$ Then by Assumption~\ref{a:consist}~\ref{a:consistOut} $V_{K,n}-V_{K,0}\convP 0$.

Consider the indicator function $\mathds{1}\{y\in A\}$ and let $D_A$ be the discontinuity points of $\mathds{1}\{y\in A\}$, i.e.\ $D_A=\partial A$. Noting that $V_{K,0}\stackrel{d}{=} Y$, we deduce by Lemma~\ref{e:CMTmod}
  that  $D_n:=\mathds{1}\{V_{K,n}\in A\}-\mathds{1}\{V_{K,0}\in A\}\convP 0$, provided $P(Y\in \partial A)=0.$ Since $D_n$ is bounded, we get $E|D_n|\to 0$.  Hence
\[
E|D_n| = \frac{1}{n}\sum_{k=1}^n E\left|\mathds{1}\{V_{k,n}\in A\}-\mathds{1}\{V_{k,0}\in A\}\right|\to 0,
\]
and consequently \eqref{e:toshow} holds. \qed

We suppose that $\varepsilon$ is a variable in $H_2$ distributed as a generic model error in \eqref{mod}. 
\begin{lemm}\label{l:noiseconvD}
Suppose that Assumption~\ref{a:consist2} holds. Then $\varepsilon^{(n)} \xrightarrow{d} \varepsilon \mbox{ in $L^2[0,1]$}.$
\end{lemm}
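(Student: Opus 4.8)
The plan is to exploit that, conditionally on the sample, $\varepsilon^{(n)}$ is a centred Gaussian element of $H_2$ with covariance $\widehat\Gamma_{\varepsilon,n}$, while (in the setting of Theorem~\ref{t:main2}, under which this lemma is applied) $\varepsilon$ is centred Gaussian with covariance $\Gamma=\var(\varepsilon_1)$. Since $H_2$ is a separable Hilbert space, weak convergence $\varepsilon^{(n)}\convd\varepsilon$ follows once we establish (i) convergence of the finite-dimensional distributions, i.e.\ of $\big(\langle\varepsilon^{(n)},x_1\rangle,\dots,\langle\varepsilon^{(n)},x_m\rangle\big)$ for all finite families $x_1,\dots,x_m\in H_2$, and (ii) tightness of $\{\mathcal{L}(\varepsilon^{(n)})\}_n$; a Prokhorov argument then pins the limit down by its characteristic functional. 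The useful reformulation of Assumption~\ref{a:consist2} is that, writing $\delta_{k,n}:=(\hat\varrho_n-\varrho)(X_k)$, one has $\frac1n\sum_{k}(\hat\varrho_n-\varrho)X_k\otimes X_k(\hat\varrho_n-\varrho)^\ast=\frac1n\sum_k \delta_{k,n}\otimes\delta_{k,n}$, a nonnegative operator whose trace norm is $\frac1n\sum_k\|\delta_{k,n}\|^2$; hence Assumption~\ref{a:consist2} says precisely that $E\big[\tfrac1n\sum_k\|\delta_{k,n}\|^2\big]\to 0$, which I will refer to as $(\star)$. Also recall $\hat\varepsilon_{k,n}=\varepsilon_k-\delta_{k,n}$, so $\hat\varepsilon_{k,n}-\bar\varepsilon_{\cdot,n}=(\varepsilon_k-\bar\varepsilon)-(\delta_{k,n}-\bar\delta_n)$ with the obvious notation.

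For (i), I would first show that $\langle\widehat\Gamma_{\varepsilon,n}x,x\rangle\convP\langle\Gamma x,x\rangle$ for every $x\in H_2$ (the off-diagonal entries follow by polarisation). Expanding $\langle\widehat\Gamma_{\varepsilon,n}x,x\rangle=\frac1n\sum_k\langle(\varepsilon_k-\bar\varepsilon)-(\delta_{k,n}-\bar\delta_n),x\rangle^2$, the leading term $\frac1n\sum_k\langle\varepsilon_k-\bar\varepsilon,x\rangle^2$ converges a.s.\ to $\langle\Gamma x,x\rangle$ by the strong law of large numbers (using $E\|\varepsilon_1\|^2<\infty$ and $E\varepsilon_1=0$); the pure-$\delta$ term is bounded by $\|x\|^2\,\tfrac1n\sum_k\|\delta_{k,n}\|^2\convP0$ by $(\star)$ and Markov's inequality; and the cross term is $O_P(1)\cdot o_P(1)=o_P(1)$ by Cauchy--Schwarz. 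Then, conditioning on the sample, $\big(\langle\varepsilon^{(n)},x_j\rangle\big)_j$ is $N(0,\Sigma_n)$ with $(\Sigma_n)_{ij}=\langle\widehat\Gamma_{\varepsilon,n}x_i,x_j\rangle$, so the unconditional characteristic function at $t\in\mathbb{R}^m$ equals $E\big[\exp(-\tfrac12 t^\top\Sigma_n t)\big]$; since $t^\top\Sigma_n t\convP t^\top\Sigma t$ with $\Sigma_{ij}=\langle\Gamma x_i,x_j\rangle$ and the integrand is bounded by $1$, the bounded convergence theorem yields convergence to $\exp(-\tfrac12 t^\top\Sigma t)$, the characteristic function of $\big(\langle\varepsilon,x_j\rangle\big)_j$. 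Thus the finite-dimensional distributions converge.

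For (ii), let $P_J$ be the orthogonal projection onto the span of the first $J$ eigenfunctions of $\Gamma$. Then $E\|(I-P_J)\varepsilon^{(n)}\|^2=E\,\operatorname{tr}\!\big((I-P_J)\widehat\Gamma_{\varepsilon,n}\big)=E\big[\tfrac1n\sum_k\|(I-P_J)(\hat\varepsilon_{k,n}-\bar\varepsilon_{\cdot,n})\|^2\big]$, and dropping the (nonnegative) centring correction and using $\|a+b\|^2\le 2\|a\|^2+2\|b\|^2$ gives the bound $2\operatorname{tr}\!\big((I-P_J)\Gamma\big)+2E\big[\tfrac1n\sum_k\|\delta_{k,n}\|^2\big]$. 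The first summand tends to $0$ as $J\to\infty$ because $\Gamma$ is trace class, and the second tends to $0$ by $(\star)$; hence for any $\eta>0$ there exist $J$ and $N$ with $E\|(I-P_J)\varepsilon^{(n)}\|^2<\eta$ for all $n\ge N$, while the finitely many laws $\mathcal{L}(\varepsilon^{(n)})$ with $n<N$ are individually tight on the Polish space $H_2$. This establishes tightness, and together with (i) it gives $\varepsilon^{(n)}\convd\varepsilon$ in $L^2[0,1]$.

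The main obstacle is the tightness step: Hilbert--Schmidt or weak-operator control of $\widehat\Gamma_{\varepsilon,n}$ is \emph{not} enough for weak convergence of the associated (mixed-)Gaussian laws, and one must instead control the tail of the trace of $\widehat\Gamma_{\varepsilon,n}$ uniformly in $n$. The point is that Assumption~\ref{a:consist2} is calibrated exactly to handle the $\delta_{k,n}$ contribution to this trace tail, while trace-class-ness of $\Gamma$ handles the $\varepsilon_k$ contribution; a secondary subtlety, namely that unconditionally $\varepsilon^{(n)}$ is a scale mixture of Gaussians rather than Gaussian, is dispatched by conditioning on the sample and invoking bounded convergence as in step (i).
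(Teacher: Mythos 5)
Your proposal is correct, and it reaches the conclusion by a more self-contained route than the paper. The paper's proof first establishes the stronger statement that $\widehat\Gamma_{\varepsilon,n}$ is trace-norm consistent: it compares $\widehat\Gamma_{\varepsilon,n}$ with the infeasible empirical covariance $\widehat C_\varepsilon$ of the true errors via the operator decomposition $\hat\varepsilon_{k,n}=\varepsilon_k-(\hat\varrho_n-\varrho)X_k$, controls the cross terms with an inequality of the form $\|R\widehat C_{AB}\|_1\le\|R\widehat C_AR^*\|_1^{1/2}\|\widehat C_B\|_1^{1/2}$, and invokes Theorem~4.1 of \cite{bosq:2000} for $\widehat C_\varepsilon\convP\Gamma$; this trace-norm consistency is then used both for the one-dimensional distributions (via the representation $\langle\varepsilon^{(n)},v\rangle\stackrel{d}{=}\langle\widehat\Gamma_{\varepsilon,n}v,v\rangle^{1/2}Z$ and Slutsky) and, through Suquet's criterion (Lemma~\ref{l:tightness}), for the uniform-in-$n$ control of the tail traces $\sum_{l>k}E\langle\widehat\Gamma_{\varepsilon,n}v_l,v_l\rangle$ along the eigenfunctions of $\Gamma$. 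You instead observe that the trace-norm quantity in Assumption~\ref{a:consist2} is exactly $E\big[\tfrac1n\sum_k\|\delta_{k,n}\|^2\big]$, and you feed this directly into both steps: the scalar SLLN plus Cauchy--Schwarz gives weak-operator convergence $\langle\widehat\Gamma_{\varepsilon,n}x,x\rangle\convP\langle\Gamma x,x\rangle$, which suffices for the finite-dimensional distributions (your conditional characteristic-function/bounded-convergence argument is equivalent to the paper's Slutsky step), and the elementary bound $E\operatorname{tr}\big((I-P_J)\widehat\Gamma_{\varepsilon,n}\big)\le 2\operatorname{tr}\big((I-P_J)\Gamma\big)+2E\big[\tfrac1n\sum_k\|\delta_{k,n}\|^2\big]$ replaces the paper's detour through $\|\widehat\Gamma_{\varepsilon,n}-\widehat C_\varepsilon\|_1$; your split into $n\ge N$ (uniform moment bound) and finitely many $n<N$ (individually tight) mirrors the paper's handling of the supremum over $n$. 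What the paper's route buys is the reusable intermediate result $E\|\widehat\Gamma_{\varepsilon,n}-\widehat C_\varepsilon\|_1\to0$; what yours buys is the avoidance of the external covariance-operator LLN and the trace inequality, and an explicit identification of what Assumption~\ref{a:consist2} really demands.

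One small point to make explicit: the tail bound $E\|(I-P_J)\varepsilon^{(n)}\|^2$ alone is not the full tightness criterion; you also need tightness (or uniform stochastic boundedness) of the projected laws $P_J\varepsilon^{(n)}$, i.e.\ condition (a) of Lemma~\ref{l:tightness}. This is immediate from your own computation with $P_J$ replaced by the zero operator, which gives $\sup_{n}E\|\varepsilon^{(n)}\|^2=\sup_n E\operatorname{tr}\widehat\Gamma_{\varepsilon,n}\le 2\operatorname{tr}\Gamma+o(1)<\infty$ and hence Chebyshev-type tightness of every finite-dimensional projection; adding this sentence closes the argument.
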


\begin{lemm}\label{l:noiseconvDcont}
Suppose that Assumption~\ref{a:consist2C} holds. Then $\varepsilon^{(n)} \xrightarrow{d} \varepsilon \mbox{ in $C[0,1]$}.$
\end{lemm}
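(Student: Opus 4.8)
\noindent\textbf{Proof of Lemma~\ref{l:noiseconvDcont} (plan).}
The plan is to establish $\varepsilon^{(n)}\convd\varepsilon$ in $C[0,1]$ along the classical two steps: convergence of the finite-dimensional distributions, plus tightness (asymptotic equicontinuity of the modulus of continuity). Throughout write $\delta_k=(\hat\varrho_n-\varrho)(X_k)$, so that $\hat\varepsilon_{k,n}=\varepsilon_k-\delta_k$, and observe that Assumption~\ref{a:consist2C}(a) is precisely the statement $\|D_n\|_\infty\convP 0$, where $D_n(t,s)=\frac1n\sum_{k=1}^n\delta_k(t)\delta_k(s)$; in particular $\sup_{t}\frac1n\sum_k\delta_k(t)^2=\|D_n\|_\infty$.

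\emph{Finite-dimensional distributions.} First I would show $\widehat\Gamma_{\varepsilon,n}(t,s)\convP\Gamma(t,s)$ for each fixed $t,s$, where $\Gamma=\var(\varepsilon)$. Expanding \eqref{e:gammahat} via $\hat\varepsilon_{k,n}-\bar\varepsilon_{\cdot,n}=(\varepsilon_k-\bar\varepsilon_\cdot)-(\delta_k-\bar\delta_\cdot)$ produces four terms. The purely-$\varepsilon$ term $\frac1n\sum_k\varepsilon_k(t)\varepsilon_k(s)-\bar\varepsilon_\cdot(t)\bar\varepsilon_\cdot(s)$ converges a.s.\ to $\Gamma(t,s)$ by the strong law of large numbers (using $E\|\varepsilon\|_\infty^2<\infty$, e.g.\ via Mourier's SLLN in $C([0,1]^2)$, which in fact delivers the stronger $\|\frac1n\sum_k\varepsilon_k\otimes\varepsilon_k-\Gamma\|_\infty\to 0$). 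Each of the remaining three terms is bounded by Cauchy--Schwarz by quantities of the form $\big(\tfrac1n\sum_k\delta_k(t)^2\big)^{1/2}\big(\tfrac1n\sum_k\varepsilon_k(s)^2\big)^{1/2}$ or $\tfrac1n\sum_k\delta_k(t)^2$, hence by $\|D_n\|_\infty^{1/2}\cdot O_P(1)$ or $\|D_n\|_\infty$, which vanish in probability by Assumption~\ref{a:consist2C}(a). (This argument in fact yields $\|\widehat\Gamma_{\varepsilon,n}-\Gamma\|_\infty\convP 0$, though only the pointwise statement is needed.) Given this, for any $t_1,\dots,t_m\in[0,1]$ the vector $(\varepsilon^{(n)}(t_1),\dots,\varepsilon^{(n)}(t_m))$ is, conditionally on the sample, $N(0,\Sigma_n)$ with $\Sigma_n=\{\widehat\Gamma_{\varepsilon,n}(t_i,t_j)\}_{i,j}$; since $u\mapsto e^{-\frac12 u^\top\Sigma_n u}$ is bounded and continuous in $\Sigma_n$, bounded convergence gives $E\,e^{-\frac12 u^\top\Sigma_n u}\to e^{-\frac12 u^\top\Sigma u}$ with $\Sigma=\{\Gamma(t_i,t_j)\}$, so the finite-dimensional distributions of $\varepsilon^{(n)}$ converge to those of $\varepsilon$.

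\emph{Tightness.} Tightness of $\varepsilon^{(n)}(0)$ follows from the $m=1$ case above. For asymptotic equicontinuity, condition on the sample (equivalently on $\widehat\Gamma_{\varepsilon,n}$); then $\varepsilon^{(n)}$ is a centered Gaussian process whose canonical metric satisfies $d(t,s)^2=V_n^2(t,s)\le M_V^2|t-s|^{2\alpha}$ by Assumption~\ref{a:consist2C}(b). Consequently the $d$-covering numbers of $[0,1]$ obey $N(\epsilon)\lesssim M_V^{1/\alpha}\epsilon^{-1/\alpha}+1$, and Dudley's entropy bound for the increments (see, e.g., \cite{vaart:wellner:1996}) gives, with $\omega_{\varepsilon^{(n)}}(\rho)=\sup_{|t-s|\le\rho}|\varepsilon^{(n)}(t)-\varepsilon^{(n)}(s)|$ and a constant $K=K(\alpha)$,
\[
E\big[\,\omega_{\varepsilon^{(n)}}(\rho)\,\big|\,\text{sample}\,\big]\ \le\ K\, M_V\,\rho^{\alpha}\sqrt{\log(1/\rho)} .
\]
Taking expectations and using $EM_V<\infty$ yields $E\,\omega_{\varepsilon^{(n)}}(\rho)\le K(EM_V)\,\rho^{\alpha}\sqrt{\log(1/\rho)}$, a bound free of $n$ that tends to $0$ as $\rho\to 0$; by Markov's inequality this is exactly the asymptotic equicontinuity condition, so $\{\varepsilon^{(n)}\}$ is tight in $C[0,1]$.

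Combining tightness with the finite-dimensional convergence, $\varepsilon^{(n)}\convd\mu$ in $C[0,1]$ for a limit $\mu$ all of whose finite-dimensional marginals are centered Gaussian with covariance $\Gamma$; since $\varepsilon$ is a $C[0,1]$-valued Gaussian element with $\var(\varepsilon)=\Gamma$, necessarily $\mu$ is the law of $\varepsilon$, which completes the proof. I expect the tightness step to be the crux: the subtleties are that $\varepsilon^{(n)}$ is only \emph{conditionally} Gaussian, and that Assumption~\ref{a:consist2C}(b) furnishes only a first moment on $M_V$, so one must condition first and then invoke an entropy (Dudley) bound, rather than a Kolmogorov--Chentsov moment argument, which would require higher moments of $M_V$. \qed
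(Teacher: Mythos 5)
Your proposal is correct and follows essentially the same route as the paper: convergence of $\widehat\Gamma_{\varepsilon,n}$ to $\Gamma$ via the decomposition of $\hat\varepsilon_{k,n}$, Assumption~\ref{a:consist2C}~(a) and a Banach-space law of large numbers for finite-dimensional convergence, and tightness by conditioning on $\widehat\Gamma_{\varepsilon,n}$ and applying Dudley's entropy bound with the H\"older control of $V_n$ from Assumption~\ref{a:consist2C}~(b), then taking expectations using $EM_V<\infty$ and Markov's inequality. Your handling of the empirical centering and the characteristic-function argument for the finite-dimensional laws are just slightly more explicit versions of what the paper does.
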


\begin{lemm}\label{l:conditionalslutsky}
Consider a sequence of random variables $\varepsilon^{(n)}\in H_2$ satisfying $\varepsilon^{(n)} \convd \varepsilon$ as $n \to \infty$. Additionally, let $X\in H_1$ be independent from both $(\varepsilon^{(n)})_{n\geq 1}$ and $\varepsilon$. Suppose there exists a mapping $f$ and a sequence of random maps $\hat f_n$, independent from $X$,
such that $f,f_n: H_1 \mapsto H_2$, and
\begin{equation}
\| \hat f_n(X) - f(X) \| \convP 0,
\quad n \to \infty.
\label{a:consistOut2}
\end{equation}
Let $g: H_2 \times H_2 \mapsto H_2$ be a continuous function. Then for all $A$ with $P\big( g\big(\varepsilon, f(X) \big) \in \partial A \big) = 0$,
it holds that
\[
P\big( g\big(\varepsilon^{(n)}, \hat f_n(X) \big) \in A | X \big) \convP P\big( g\big(\varepsilon, f(X) \big) \in A | X \big),
\quad n \to \infty.
\]
\end{lemm}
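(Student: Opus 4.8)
The plan is to reduce the convergence in probability of the conditional probabilities to an almost sure statement along subsequences, after which the result follows from the ordinary continuous mapping and portmanteau theorems applied ``pointwise'' in the covariate. Since $(\varepsilon^{(n)},\hat f_n)_{n\ge 1}$ and $\varepsilon$ are independent of $X$, the conditional probabilities are deterministic functions of $X$: writing $h_n(x) = P\big( g(\varepsilon^{(n)}, \hat f_n(x)) \in A \big)$ and $h(x) = P\big( g(\varepsilon, f(x)) \in A \big)$, one has $P\big( g(\varepsilon^{(n)}, \hat f_n(X)) \in A \mid X\big) = h_n(X)$ and $P\big( g(\varepsilon, f(X))\in A\mid X\big) = h(X)$ almost surely, so it suffices to prove $h_n(X)\convP h(X)$. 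By the subsequence characterisation of convergence in probability, it is enough to show that every subsequence of $(h_n(X))_{n\ge1}$ admits a further subsequence converging to $h(X)$ almost surely.

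Given a subsequence, I would first use $\|\hat f_n(X) - f(X)\|\convP 0$ to pass to a further subsequence $(n_k)$ along which $\|\hat f_{n_k}(X) - f(X)\|\to 0$ almost surely. Because $\hat f_{n_k}$ is independent of $X$, Fubini's theorem then produces a set $N\subset H_1$ with $P(X\in N)=0$ such that $\hat f_{n_k}(x)\to f(x)$ almost surely for every $x\notin N$. In the same way, the hypothesis $P\big(g(\varepsilon,f(X))\in\partial A\big)=0$ and the independence of $\varepsilon$ and $X$ give, after enlarging $N$, that $P\big(g(\varepsilon,f(x))\in\partial A\big)=0$ for all $x\notin N$. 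This passage from the random argument $X$ to deterministic arguments $x$ is the step I expect to be the main obstacle: the hypotheses only control $\hat f_n$ and the boundary probability along $X$, so a genuinely pointwise argument is only licensed after extracting an almost surely convergent subsequence and slicing with Fubini, which is precisely where the independence of $\hat f_n$ from $X$ is used.

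It then remains to fix $x\notin N$ and run the standard argument in the separable metric space $H_2$. Since $\hat f_{n_k}(x)\to f(x)$ almost surely, with $f(x)$ a constant, and $\varepsilon^{(n_k)}\convd\varepsilon$, Slutsky's lemma gives $(\varepsilon^{(n_k)},\hat f_{n_k}(x))\convd(\varepsilon,f(x))$ in $H_2\times H_2$, and continuity of $g$ together with the continuous mapping theorem yields $g(\varepsilon^{(n_k)},\hat f_{n_k}(x))\convd g(\varepsilon,f(x))$. As $A$ is a continuity set of $g(\varepsilon,f(x))$, the portmanteau theorem gives $h_{n_k}(x)=P\big(g(\varepsilon^{(n_k)},\hat f_{n_k}(x))\in A\big)\to P\big(g(\varepsilon,f(x))\in A\big)=h(x)$. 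Since this holds for every $x\notin N$ and $P(X\in N)=0$, we obtain $h_{n_k}(X)\to h(X)$ almost surely, and the subsequence principle then yields $h_n(X)\convP h(X)$, which is the assertion of the lemma.
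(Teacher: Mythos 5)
Your argument is correct, and its probabilistic core at a fixed covariate value $x$ --- Slutsky's lemma for the pair $(\varepsilon^{(n_k)},\hat f_{n_k}(x))$ with constant limit $f(x)$, the continuous mapping theorem for $g$, and the portmanteau theorem at the continuity set $A$ --- is exactly the computation in the paper's proof. Where you differ is in the measure-theoretic reduction to deterministic $x$. The paper defines $C_1=\{x\colon \hat f_n(x)\convP f(x)\}$ and $C_2=\{x\colon P(g(\varepsilon,f(x))\in\partial A)=0\}$ and shows directly that $P(X\in C_1\cap C_2)=1$, establishing $P(X\in C_1)=1$ by interchanging limit and expectation for the conditional probabilities $P\big(\|\hat f_n(X)-f(X)\|\geq 1/k\mid X\big)$ and then using $\sigma$-subadditivity over $k$; this yields $h_n(x)\to h(x)$ along the \emph{whole} sequence for almost every $x$, hence $h_n(X)\to h(X)$ almost surely. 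You instead invoke the subsequence characterisation of convergence in probability, pass to an almost surely convergent subsequence of $\|\hat f_n(X)-f(X)\|$, and use Fubini (via the independence of $\hat f_n$ from $X$) to slice this into a pointwise statement for almost every $x$; the boundary hypothesis is handled by the same slicing, which is what the paper packages into $C_2$. Your route delivers only the stated convergence in probability (the paper's gives the slightly stronger a.s.\ convergence of $h_n(X)$), but it sidesteps the limit--expectation interchange, which in the paper is the most delicate step. Both proofs rest on the same implicit requirements: joint measurability of $(\omega,x)\mapsto\hat f_n(x)$, so that $h_n$ is measurable and the Fubini/disintegration step is licit, and joint independence of $(\varepsilon^{(n)},\hat f_n)$ from $X$, so that the conditional probabilities in the statement coincide with $h_n(X)$ and $h(X)$.
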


\medskip

\noindent\textbf{Proof of Theorem~\ref{t:main2}.} Lemmata~\ref{l:noiseconvD} and \ref{l:noiseconvDcont} imply that
that $\varepsilon^{(n)}\convd \varepsilon$ in the metric space $H_2$
if Assumption~\ref{a:consist2} or \ref{a:consist2C} holds.
The  theorem then follows from Lemma~\ref{l:conditionalslutsky}
with $\hat f_n = \hat\varrho_n$ and $g(\eta, W) =   W+ \eta$. \qed

We note that Corollary~\ref{cor-predsets} also follows from Theorems~\ref{t:main1} and \ref{t:main2}, and Lemma~\ref{l:conditionalslutsky}.

\medskip

\noindent\textbf{Proof of Proposition~\ref{p:uniformconv}.} We will adapt the usual Glivenko--Cantelli argument for this proof.
Let $\epsilon, \delta > 0$.
We show first that our assumptions imply that there exists a set of finitely many deterministic points
$a = \xi_0 < \xi_1 < \dots < \xi_K = b$
such that
\[
P\Big[ \big| P(Y \in A_{\xi_k} | X) - P(Y \in A_{\xi_{k-1}} | X) \big| > \delta \Big] < \epsilon/2,
\quad \forall\, 1 \leq k \leq K.
\]
To this end we use the notation $W_\xi = P(Y \in A_{\xi_k} | X)$.
If the interval $[a,b]$ is unbounded,
we can choose $u$ and $v$ such $P(W_{u}< W_{a} + \delta)\geq 1-\varepsilon/2$ and
$P(W_{v} > W_{b} - \delta)\geq 1-\varepsilon/2$.
If the interval $[a,b]$ is bounded, then we do not need this step and set $[u,v] = [a,b]$.
On the compact interval $[u,v]$, the process $W_\xi$ is now uniformly
continuous in probability, i.e.\ for every $\xi$, $\varepsilon$ and $\delta$, there exists a $\gamma$ which doesn't
depend on $\xi$, such that
$
|\xi-\xi'|\leq \gamma
$
implies that $P(|W_\xi-W_{\xi'}|\geq \delta)<\varepsilon/2$. Now set $\xi_i=u+(i-1)\gamma$
and $K=\lfloor (v-u)/\gamma\rfloor+1$.

For any $\xi \in [a,b]$, we have $\xi \in [\xi_{k-1},\xi_k)$ for some $k$ and
with a probability of at least $1-\epsilon/2$,
\begin{align*}
\hat P_n(Y\in A_\xi|X) - P(Y\in A_\xi|X)
 &\leq \hat P_n(Y\in A_{\xi_k}|X) - P(Y\in A_{\xi_{k-1}} |X) \\
 &\leq \hat P_n(Y\in A_{\xi_k}|X) - P(Y\in A_{\xi_k} |X) + \delta.
\end{align*}
Similar arguments yield
\begin{align*}
&\big| \hat P_n(Y\in A_\xi|X) - P(Y\in A_\xi|X) \big| \leq \max_{l \in \{k, k+1\}} \big| \hat P_n(Y\in A_{\xi_l}|X) - P(Y\in A_{\xi_l} |X) \big| + \delta,
\end{align*}
with a probability of at least $1 - \epsilon$. Therefore,
\begin{align*}
&\sup_{\xi \in \mathbb{R}} \Big| \hat P_n(Y\in A_\xi|X) - P(Y\in A_\xi|X) \Big| \leq \max_{0 \leq k \leq K} \big| \hat P_n(Y\in A_{\xi_k}|X) - P(Y\in A_{\xi_k} |X) \big| + \delta.
\end{align*}
The proposition follows
as the maximum on the right hand side converges to 0 in probability. \qed

\bigskip

\noindent\textbf{Proof of Corollary~\ref{cor:unif}.}
By the uniform convergence of $\hat P$ obtained in Proposition~\ref{p:uniformconv} we get
$
\hat P(Y \in A_{\hat\xi_p(X)} | X)-P(Y \in A_{\hat\xi_p(X)} | X)\convP 0.
$
It is clear by definition that
$\hat P(Y \in A_{\hat\xi_p(X)} | X) \geq p$ for all $n$.
We even have $\hat P(Y \in A_{\hat\xi_p} | X) \convP p$,
because of the definition \eqref{e:predictionxi} of $\hat\xi_p(X)$
and the uniform convergence in probability of $\hat P(Y \in A_\xi | X)$
to a continuous function.
This proves the first statement.

As for the second statement, we note that if $P(Y \in A_\xi | X)$ is a.s.\ continuous and strictly increasing in $\xi$,
then (i) $P(Y \in A_{\xi_p(X)} | X)=p$ and %
(ii) for any
$\epsilon>0$  we can find a $\delta_X>0$, such that
$\big|P(Y \in A_{\hat\xi_p(X)} | X) - P(Y \in A_{\xi_p(X)} | X)\big| < \delta_X$ implies
$ |\hat\xi_p(X) - \xi_p(X)| < \epsilon$.
Because $\delta_X > 0$, it follows by (i) and (ii) that
\begin{align*}
P\big( |\hat\xi_p(X) - \xi_p(X)| \geq \epsilon \big)
 &\leq P\Big( \big|P(Y \in A_{\hat\xi_p(X)} | X) - p\big| \geq \delta_X \Big)
 \to 0.
\end{align*}
\qed

\bigskip

\noindent\textbf{Proof of Proposition~\ref{p:main1}.} When $\varrho$ is a bounded linear operator,
$(X_k)$ and $(\varepsilon_k)$
are centered, $L^4$-$m$-approximable random sequences, where
$(\varepsilon_k)$ is white noise independent of
$(X_k)$, then it follows from \cite{hormann:kidzinski:2015}  that
\begin{equation}\label{e:consistentregression}
\big\| \hat \varrho_n(X) - \varrho(X) \big\| \convP 0,
\end{equation}
if $X \stackrel{d}{=} X_1$. In particular \eqref{e:consistentregression}  holds for both, out-of-sample and in-sample prediction,
since if $K_n$ is uniformly distributed on $\{1, 2, \dots, n\}$,
then $X_{K_n} \stackrel{d}{=} X_1$.  \qed

\medskip

In order to lighten the notation,
the covariance operator of a random variable $Z$
will be denoted by  $C_Z=C_{ZZ}$.
The usual operator norm will just be denoted by $\|\cdot\|$.

\bigskip

\noindent\textbf{Proof of  Proposition~\ref{p:main2}.}  We will show that
\[
E \big\| (\varrho - \hat\varrho_n) \widehat C_X (\varrho - \hat\varrho_n)^* \big\|_1
 = \mathcal{O}\Big( \frac{1}{m_n} + \frac{m_n}{n} \Big) \to 0,
\quad n \to \infty.
\]
We will denote the inverse of the truncated estimated covariance as
\[
\widehat C_X^\invtn := \sum_{i=1}^{T_n} \frac{1}{\hat \lambda_i} \, \hat v_i \otimes \hat v_i,
\]
and the projection onto the subspace spanned
by the first $k$ eigenfunctions of $\widehat C_X$
as $\widehat\Pi_k$.
Accordingly, $\widehat\Pi^\perp_k = \mathrm{Id} - \widehat\Pi_k$.
Note that
$\widehat C_X \widehat C_X^\invtn
 = \widehat C_X^\invtn \widehat C_X
 = \widehat \Pi_{T_n}$
and thus
\begin{align}\label{e:varrhodecomp}
\varrho-\hat\varrho_n &= \varrho - \widehat C_{YX} \widehat C_X^\invtn
 = \varrho \widehat \Pi_{T_n}^\perp - \widehat C_{\varepsilon X} \widehat C_X^\invtn.
\end{align}
Then
\begin{align}
(\varrho - \hat\varrho_n) \widehat C_X (\varrho - \hat\varrho_n)^*
 &= \varrho \, \widehat \Pi_{T_n}^\perp \, \widehat C_X \, \varrho^*
    - \widehat C_{\varepsilon X} \widehat \Pi_{T_n}^\perp \, \varrho^*
    - \varrho \, \widehat \Pi_{T_n}^\perp \, \widehat C_{X \varepsilon}
    + \widehat C_{\varepsilon X} \, \widehat C_X^\invtn \, \widehat C_{X \varepsilon}.
\label{e:gammadecomp}
\end{align}
It follows that
\begin{align}
E \big\| (\varrho - \hat\varrho_n) \widehat C_X (\varrho - \hat\varrho_n)^* \big\|_1
 &\leq
    E\big\| \varrho \, \widehat \Pi_{T_n}^\perp \widehat C_X \, \varrho^* \big\|_1
    + 2 E\big\| \varrho \, \widehat \Pi_{T_n}^\perp \widehat C_{X \varepsilon} \big\|_1
    + E\big\| \widehat C_{\varepsilon X} \, \widehat C_X^\invtn \, \widehat C_{X \varepsilon} \big\|_1.
\label{e:gammadecomptrace}
\end{align}
We treat each term separately and show convergence to zero.

Given the definition of $T_n$ in \eqref{e:chooseTn} with $m_n=o\big(\sqrt n\big)$ we get
\begin{align*}
E \big\| \varrho \, \widehat \Pi_{T_n}^\perp \, \widehat C_X \, \varrho^* \big\|_1
 &\leq \|\varrho\|_\cs^2 E \big\| \widehat \Pi_{T_n}^\perp \, \widehat C_X \big\|
 \leq \|\varrho\|_\cs^2 \, E \hat\lambda_{T_n+1}
 < \big\| \varrho \big\|_\cs^2 \, m_n^{-1}.
\end{align*}
By assumption, $\big\| \varrho \big\|_\cs < \infty$ and
 $m_n^{-1} \to 0$.

As for the second term in \eqref{e:gammadecomptrace}, we remark that
$E \big\| \varrho \, \widehat \Pi_{T_n}^\perp \, \widehat C_{X \varepsilon} \big\|_1
 \leq \|\varrho\|_\cs \, E \big\| \widehat C_{X \varepsilon} \big\|_\cs.$
The mean squared Hilbert--Schmidt norm of $\widehat C_{X \varepsilon}$ asymptotically vanishes since
\begin{align*}
n E\| \widehat C_{X \varepsilon} \|_\cs^2
 &= \frac 1 n \sum_{k=1}^n \sum_{l=1}^n E\big[ \langle \varepsilon_k , \varepsilon_l \rangle \langle X_k , X_l \rangle \big]  = \frac 1 n \sum_{k=1}^n E\| \varepsilon_k \|^2 \, E\| X_k \|^2
 = \| \Gamma \|_1 \, \|C_X\|_1 < \infty.
\end{align*}
Note that all summands with $k \ne l$ vanish due to either $\varepsilon_k$ or $\varepsilon_l$ being independent from all other variables. Thus, the second term in \eqref{e:gammadecomptrace} vanishes.

Finally, we note that for the third term in \eqref{e:gammadecomptrace} we have
\begin{align*}
E\big\|\widehat C_{\varepsilon X} \widehat C_X^\invtn \widehat C_{X \varepsilon} \big\|_1
 &\leq E\big[ \| \widehat C_X^\invtn  \| \, \| \widehat C_{X \varepsilon} \|_\cs^2 \big]
 \leq  m_n \, E\| \widehat C_{X \varepsilon} \|_\cs^2  = \frac{m_n}{n}  \, \| \Gamma \|_1 \, \|C_X\|_1.
\end{align*}
Since $m_n = o(\sqrt n)$, this converges to zero,
concluding the proof of the proposition. \qed

\bigskip

\noindent\textbf{Proof of Proposition~\ref{p:mainc1}.} We denote all operator norms that stem from the sup-norm $\|\cdot\|_\infty$ on $C[0,1]$
with $\|\cdot\|_{C}$,
in order to distinguish them from the operator norms with respect to the $L^2$-norm.
If $\varrho\colon H_1 \to C[0,1]$ is bounded, it means that
\[
\|\varrho\|_{C} := \sup_{v \in H_1} \frac{\|\varrho v \|_\infty}{\; \| v \|_{H_1} \!} < \infty.
\]
From the definition, it is clear that by embedding $C[0,1]$ into $L^2[0,1]$,
the inequality
$\|\cdot\|_{L^2} \leq \|\cdot\|_\infty$ implies that
$\|\cdot\|_{L^2} \leq \|\cdot\|_{C}$ for the respective operator norm.

We begin by showing that Assumption~\ref{a:consist} holds.
Let us first remark that in the following proof,
it will not be important whether $X$ is an element of the sample or not; it suffices that $X \stackrel{d}{=} X_1$.
We assume $X_k$ and $\varepsilon_k$ are centered to simplify the presentation. We have according to \eqref{e:varrhodecomp} that
\begin{align*}
\big\|(\hat \varrho_n - \varrho) X \big\|_\infty
 &\leq   \big\| \varrho \widehat\Pi^\perp_{T_n} X \big\|_\infty +  \big\| \widehat C_{\varepsilon X} \widehat C^\invtn_X X \big\|_\infty.
\end{align*}
By Lemmata~10, 12 and 13 in \cite{hormann:kidzinski:2015} the first term is
\begin{equation}\label{e:l13}
\big\| \varrho \widehat\Pi^\perp_{T_n} X \big\|_\infty\leq \|\varrho\|_C \left(\|\Pi^\perp_{T_n}(X) \|_{H_1}+ \|\Pi_{T_n}(X)-\widehat\Pi_{T_n}(X) \|_{H_1}\right)=o_P(1),
\end{equation}
with our choice of $T_n$.

As for the second summand, \eqref{e:chooseTn} implies that
\begin{align*}
\big\| \widehat C_{\varepsilon X} \widehat C^\invtn_X X \big\|_\infty
 &\leq \big\| \widehat C_{\varepsilon X} \big\|_{C} \, \big\| \widehat C^\invtn_X X \big\|_{H_1}
 \leq m_n\big\| \widehat C_{\varepsilon X} \big\|_{C}  \big\| X \big\|_{H_1}.
\end{align*}
For any $v \in H_1$ and fixed $t$,
\begin{align*}
\big(\widehat C_{\varepsilon X} v\big)(t)
 &= \Big\langle \frac 1n \sum_{k=1}^n \varepsilon_k(t) X_k , v \Big\rangle,
\end{align*}
which clearly implies that
\[
\|\widehat C_{\varepsilon X}\|_C
 = \sup_{v \in H_1} \frac{ \sup_{t\in[0,1]} |\big(\widehat C_{\varepsilon X} v\big)(t) | }{\|v\|}
 \leq \sup_{t \in [0,1]} \Big\| \frac 1n \sum_{k=1}^n \varepsilon_k(t) X_k  \Big\|_{H_1}.
\]
Due to independence of $\varepsilon_k$ from $(X_j)_{j\leq k}$,
we have that
\begin{align*}
E\Big\| \frac 1n \sum_{k=1}^n \varepsilon_k(t) X_k  \Big\|^2_{H_1}
 &= \frac{1}{n^2} \sum_{k=1}^n \sum_{l=1}^n E\big[ \varepsilon_k(t) \varepsilon_l(t)  \langle X_k , X_l \rangle \big]\\
 &= \frac{1}{n^2} \sum_{k=1}^n E\big[ \varepsilon_k^2(t) \| X_k \|^2 \big] \leq \frac{1}{n} E\|\varepsilon_k\|_\infty^2 \| C_X \|_1
 = \mathcal{O}(1/n),
\end{align*}
and the right-hand side is independent from $t$.
Let $D \in \mathbb{N}$ and $t_i = i/D$, then
\begin{align*}
\sup_{t\in[0,1]} \Big\| \frac 1n \sum_{k=1}^n \varepsilon_k(t) X_k  \Big\|_{H_1}
 &\leq \max_{i\in \{0,1, \dots, D\} } \Big\| \frac 1n \sum_{k=1}^n \varepsilon_k(t_i) X_k  \Big\|_{H_1}
  + \frac 1n \sum_{k=1}^n M_k D^{-\alpha} \|X_k\|_{H_1}.
\end{align*}
The expectation of the second term is bounded by
$D^{-\alpha} E M_0 \, E\|X_0\|_{H_1}$.
As for the first term,
we first note that for any $t,s\in[0,1]$,
\begin{align}
E \Big\| \frac 1n \sum_{k=1}^n \big( \varepsilon_k(t) - \varepsilon_k(s) \big) X_k\Big\|^2_{H_1}
 &= \frac{1}{n^2} \sum_{k=1}^n E\Big[ \big( \varepsilon_k(t) - \varepsilon_k(s) \big)^2 \|X_k\|_{H_1}^2 \Big] \nonumber \\
 &\leq \frac{1}{n} \, E M_0^2 \, E \|X_0\|_{H_1}^2 \, |t-s|^{2\alpha},
\label{e:cepsxincrement}
\end{align}
due to the independence of $\varepsilon_k$ from $(X_j)_{j\leq k}$.
We can now bound the maximum by the sum of increments and obtain
\begin{align*}
E\Bigg[ \max_{i\in \{0, \dots, D\}} \Big\| \frac 1n \sum_{k=1}^n \varepsilon_k(t_i) X_k  \Big\|_{H_1} \Bigg]
 &\leq E \Big\| \frac 1n \sum_{k=1}^n \varepsilon_k(t_0) X_k  \Big\|_{H_1} \hspace{-4pt} + \sum_{i=1}^D E \Big\| \frac 1n \sum_{k=1}^n \big( \varepsilon_k(t_i) - \varepsilon_k(t_{i-1}) \big) X_k  \Big\|_{H_1} \\
 &= \mathcal{O}\big( n^{-1/2} + D\, n^{-1/2} D^{-\alpha} \big).
\end{align*}
Setting $D=n^{1/2}$, it follows that
\begin{equation}\label{e:p4}
E\Big[ \sup_{t\in[0,1]} \Big\| \frac 1n \sum_{k=1}^n \varepsilon_k(t) X_k  \Big\|_{H_1} \Big] = \mathcal{O}\big( n^{-\alpha/2} \big).
\end{equation}
Therefore, if we choose a sequence
$m_n = o\big( n^{\alpha/2}\big)$
then
$\big\| \widehat C_{\varepsilon X} \widehat C^\invtn_X X \big\|_\infty \convP 0$.
Lastly, we note that these results also hold if we use $X_{K_n}$,
where $K_n$ is drawn independently and uniform on $\{1,\dots,n\}$.
Therefore, Assumption~\ref{a:consist} holds.
This concludes the proof of the proposition. \qed

\clearpage

\appendix

\section{Additional Proofs} \label{a:additionalproofs}

\noindent\textbf{Proof of Lemma~\ref{l:1}.} Let $(\Omega_i,\mathcal{A}_i, \mu_i)$, $i\geq 0$, be probability spaces, such $\Omega_0=H_1$ and $\Omega_i=H_2$ for $i\geq 1$. The $\mathcal{A}_i$ are suitable $\sigma$-algebras on $\Omega_i$. The measure $\mu_0$ characterizes the distribution of $X$ and $\mu_i=\mu_1$ the distribution of $\varepsilon_i$, i.e.\ $P(\varepsilon_i\in A)=\mu_i(A)$.
Then, without loss of generality, we can assume that  $X,\varepsilon_1,\varepsilon_2,\ldots$ are defined on the canonical product space
\[
(\Omega,\mathcal{A},P)=\bigotimes_{i\geq 0} (\Omega_i,\mathcal{A}_i, \mu_i),
\]
with $X(\omega)=w$ and $\varepsilon_i(\omega)=v_i$, where $\omega=(w,v_1,v_2,\ldots)$. It is clear that the transformation $T\colon \Omega\to\Omega$ with $T(\omega)=(w,v_2,v_3,\ldots)$ is $\mathcal{A}\big/\mathcal{A}$-measurable and measure preserving. We can write
\[
\mathds{1}\{\varrho(X)+\varepsilon_k \in A\}=Z\circ T^k,
\]
where $Z(\omega)=\mathds{1}\{\varrho(X(\omega))+\varepsilon_1(\omega)\in A\}$. Hence, by the ergodic theorem we have that
\[
\frac{1}{n}\sum_{k=1}^n \mathds{1}\{\varrho(X)+\varepsilon_k\in A\}\stackrel{\mathrm{a.s.}}{\to} E\left[Z|\mathcal{I}\right],
\]
where $\mathcal{I}$ is the $\sigma$-algebra of the invariant sets of $T$. If $A$ is an invariant set of $T$, it means that $T^{-1}(A)=A$. Hence if $\omega=(w,v_1,v_2,\ldots)\in A$, then $(w,y_1,v_1,v_2\ldots)\in A$ for any $y_1\in H_2$ and by induction we conclude that $(w,y_1,\ldots, y_k,v_1,v_{2}\ldots)\in A$ for any $(y_1,\ldots, y_k)\in H_2^k$ and any $k\geq 1$.
This implies that for any $k\geq 1$ we have
\[
A\in \bigotimes_{i\in \{0,k,k+1,\ldots\}}\mathcal{A}_i=\sigma(X,\varepsilon_k,\varepsilon_{k+1},\ldots).
\]
Thus $A\in \bigcap_{k\geq 1}\sigma(X,\varepsilon_k,\varepsilon_{k+1},\ldots)$.
From \citet[p.~29]{chaumont:2003} %
we deduce that
\[
\mathcal{I}=\bigcap_{k\geq 1}\sigma(X,\varepsilon_k,\varepsilon_{k+1},\ldots)=\sigma(X,\mathcal{T}),
\]
where $\mathcal{T}$ is the tail $\sigma$-algebra $\bigcap_{k\geq 1}\sigma(\varepsilon_k,\varepsilon_{k+1},\ldots)$. Since by Kolmogorov's zero-one law $\mathcal{T}$ contains only events with probability zero or one,
$E\left[Z|\mathcal{I}\right]=E[\mathds{1}\{\varrho(X)+\varepsilon_k \in A\}|X]$. \qed

In the proof of the following lemma, we will use sufficient conditions for tightness
that stem from \cite{suquet:1999}. We restate them for clarity.
\begin{lemm}\label{l:tightness}
Let $\{ \mu_n \}_{n\geq 1}$ be a set of measures on a separable Hilbert space, and
$(v_l)_{l\geq 1}$ an orthonormal system (ONS).
Define $\Pi_k$ the projection operator onto the subspace spanned by $(v_l)_{1 \leq l \leq k}$.
$\{ \mu_n \}_{n\geq 1}$ is tight if and only if
\begin{enumerate}[(a),topsep=1pt,itemsep=0pt]
\item $\forall\, k \geq 1: \quad \{ \mu_n \circ \Pi_k^{-1} \}_{n\geq 1}$ is tight.
\item $\displaystyle
\forall\, \epsilon > 0: \quad
\lim_{k\to\infty} \, \sup_{n} \,
\mu_n\big(\{ x \in H: \| x - \Pi_k x \| > \epsilon \}\big) = 0.
$
\end{enumerate}
\end{lemm}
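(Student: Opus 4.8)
The plan is to prove the two implications separately, using that tightness of $\{\mu_n\}_{n\ge1}$ means precisely that for every $\delta>0$ there is a compact $K\subseteq H$ with $\inf_n\mu_n(K)\ge1-\delta$, together with the fact that, since $H$ is complete, a subset of $H$ is compact iff it is closed and totally bounded. It is convenient to abbreviate $g_k(x):=\|x-\Pi_k x\|$; I will use repeatedly that $g_k$ is continuous, that $g_k(x)^2=\|x\|^2-\sum_{l\le k}\langle x,v_l\rangle^2$ is non-increasing in $k$, and that $(v_l)_{l\ge1}$, assumed complete as in the applications, gives $g_k(x)\downarrow0$ for each fixed $x$.

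For the direction ``(a) and (b) imply tightness'' I would fix $\delta>0$ and construct a single compact set carrying all but $\delta$ of the mass of every $\mu_n$, by a diagonal argument. First, using (b), pick an increasing sequence $k_1<k_2<\cdots$ such that $\sup_n\mu_n(\{g_{k_j}>1/j\})<\delta2^{-j-1}$. Next, using (a), the finite-dimensional family $\{\mu_n\circ\Pi_{k_j}^{-1}\}_n$ is tight on $\Pi_{k_j}H$, so one can choose for each $j$ a closed bounded (hence compact) set $L_j\subseteq\Pi_{k_j}H$ with $\sup_n\mu_n(\{\Pi_{k_j}x\notin L_j\})<\delta2^{-j-1}$. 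Set $K:=\bigcap_{j\ge1}\{x\in H:\Pi_{k_j}x\in L_j,\ g_{k_j}(x)\le1/j\}$. A union bound gives $\sup_n\mu_n(H\setminus K)\le\sum_{j\ge1}\delta2^{-j}=\delta$, and $K$ is closed because $\Pi_{k_j}$ and $g_{k_j}$ are continuous and $L_j$ is closed.

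The converse is shorter: (a) holds because $\Pi_k$ is linear and continuous, hence maps compact sets to compact sets, so the push-forward of a tight family under $\Pi_k$ is tight; and for (b) I would fix $\epsilon,\delta>0$, take a compact $K$ with $\sup_n\mu_n(H\setminus K)<\delta$, and apply Dini's theorem — the $g_k$ form a decreasing sequence of continuous functions converging pointwise to $0$ on the compact $K$, so $\sup_{x\in K}g_k(x)\to0$; hence $\{g_k>\epsilon\}\subseteq H\setminus K$ for all large $k$ and $\sup_n\mu_n(\{g_k>\epsilon\})<\delta$ there.

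The step I expect to be the main obstacle is verifying that the set $K$ built in the first part is actually compact, i.e.\ totally bounded, since this is the only place where the two hypotheses must be combined quantitatively: given $\eta>0$, one chooses $j$ with $1/j<\eta/2$, takes a finite $\eta/2$-net $\{y_1,\dots,y_m\}$ of the compact set $L_j$, and checks that the balls $B(y_i,\eta)$ cover $K$ — because any $x\in K$ satisfies $\|x-\Pi_{k_j}x\|=g_{k_j}(x)\le1/j<\eta/2$ while $\Pi_{k_j}x$ lies within $\eta/2$ of some $y_i$; boundedness of $K$ (from $\|x\|\le\sup_{y\in L_1}\|y\|+1$) and completeness of $H$ then upgrade total boundedness to compactness. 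A minor caveat worth recording is that the ``only if'' direction genuinely requires $(v_l)_{l\ge1}$ to be a complete orthonormal basis, so that $\Pi_k x\to x$ and the limit function in Dini's theorem is $0$; this is exactly the situation relevant to the uses of this lemma in the sequel.
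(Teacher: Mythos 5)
Your proof is correct. Note, however, that the paper does not prove this lemma at all: it is quoted verbatim as a known criterion from \cite{suquet:1999} (``We restate them for clarity''), so there is no in-paper argument to compare against. Your self-contained proof is the standard one and is sound in both directions: the sufficiency part via the diagonal choice of $k_j$, the finite-dimensional compacts $L_j$, the union bound, and the verification that the closed set $K=\bigcap_j\{\Pi_{k_j}x\in L_j,\ g_{k_j}(x)\le 1/j\}$ is totally bounded (hence compact, by completeness of $H$) is exactly where hypotheses (a) and (b) interact, and your $\eta/2$-net argument handles it correctly; the necessity part via continuity of $\Pi_k$ for (a) and Dini's theorem on a compact carrying most of the mass for (b) is also fine. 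Your closing caveat is a genuine and worthwhile observation: as literally stated for an arbitrary ONS the ``only if'' direction fails (take $\mu_n=\delta_w$ with $w\ne 0$ orthogonal to all $v_l$), so the equivalence requires $(v_l)_{l\ge1}$ to be complete, whereas the sufficiency direction---the only one the paper actually invokes in the proof of Lemma~\ref{l:noiseconvD}, where the $(v_l)$ are taken to be the eigenfunctions of $\Gamma$---holds for any ONS, exactly as your construction shows.
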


\medskip

\noindent\textbf{Proof of Lemma~\ref{l:noiseconvD}.} We first show that $\widehat\Gamma_{\varepsilon,n}$ converges to $\Gamma$ in probability.
We use $\widehat C_\varepsilon$  to denote  the empirical covariance operator of the model errors, which is naturally not observable. From Theorem 4.1 of \cite{bosq:2000}, we have that $\widehat C_\varepsilon \convP \Gamma$ as $n \to \infty.$
Let
\begin{align*}
\tilde{\Gamma}_{\varepsilon,n} = \frac{1}{n}\sum_{k=1}^n (\hat\varepsilon_{k,n}-E\hat\varepsilon_{k,n})\otimes(\hat\varepsilon_{k,n}-E\hat\varepsilon_{k,n}).
\end{align*}

It follows from elementary calculations using that  $E\|\varepsilon_k\|^4 < \infty$ that $\| \widehat\Gamma_{\varepsilon,n} - \tilde\Gamma_{\varepsilon,n} \|_1=o_P(1)$, and so in order to simplify the calculations we assume that the $\hat\varepsilon_{k,n}$ terms are properly centered in the definition of  $\widehat\Gamma_{\varepsilon,n}$. For conciseness, we write
\[
\frac 1n \sum_{k=1}^n (\varrho - \hat\varrho_n) X_k \otimes X_k (\varrho - \hat\varrho_n)^*
 = (\varrho - \hat\varrho_n) \widehat C_X (\varrho - \hat\varrho_n)^*.
\]
It holds that
\begin{align*}
\big\| \widehat\Gamma_{\varepsilon,n} - \widehat C_\varepsilon \big\|_1
 &\leq \big\|(\varrho - \hat\varrho_n) \widehat C_X (\varrho - \hat\varrho_n)^*\big\|_1
    + \big\|\widehat C_{\varepsilon X} (\varrho - \hat\varrho_n)^*\big\|_1
    + \big\|(\varrho - \hat\varrho_n) \widehat C_{X \varepsilon}\big\|_1 \\
 &\leq \big\|(\varrho - \hat\varrho_n) \widehat C_X (\varrho - \hat\varrho_n)^*\big\|_1
    + 2 \, \big\|(\varrho - \hat\varrho_n) \widehat C_X (\varrho - \hat\varrho_n)^*\big\|_1^{1/2} \big\| \widehat C_\varepsilon \big\|_1^{1/2},
\end{align*}
where in the last step we used an inequality of the form
$\| R \widehat C_{AB} \|_1 \leq \| R \widehat C_{A} R^* \|_1^{1/2} \|\widehat C_{B}\|_1^{1/2}$.
From Assumption~\ref{a:consist2} it follows via Cauchy--Schwarz
that
$
E \big\| \widehat\Gamma_{\varepsilon,n} - \widehat C_\varepsilon \big\|_1 \to 0
$,
implying that $\widehat\Gamma_{\varepsilon,n} \convP \Gamma$. According to Theorem~2.3 in Bosq,
to show weak convergence it suffices to show that $\langle \varepsilon^{(n)},v\rangle\convd\langle \varepsilon, v\rangle$ for all $v\in H_2$, and that the sequence $(P\circ(\varepsilon^{(n)})^{-1})_{n\geq 1}$ is tight.  To prove this, we note that
\[
\langle \varepsilon^{(n)},v\rangle\stackrel{d}{=} \langle \widehat \Gamma_{\varepsilon,n} v , v \rangle\big.^{1/2}  Z,
\]
where $Z$ is a standard normal random variable which is independent of the sample $((Y_i, X_i)\colon 1\leq i\leq n)$.
By the considerations above it follows that
$\langle \widehat \Gamma_{\varepsilon,n} v , v \rangle\convP \langle \Gamma  v , v \rangle$.
This yields the distributional convergence via Slutzky's lemma.

We will now establish tightness of the measures.
We first fix some ONS $(v_l)_{l\geq 1}$ and note that
$(\Pi_k \varepsilon^{(n)})_{n \geq 1}$ is finite dimensional and

\[
P(\| \Pi_k \varepsilon^{(n)} \| > M) \leq \frac{E\|\Pi_k \varepsilon^{(n)}\|^2}{M^2} = \frac{ \| \Pi_k E \widehat \Gamma_{\varepsilon,n} \|_1 }{M^2} \leq \frac{k \| E \widehat \Gamma_{\varepsilon,n} \| }{M^2}.
\]
As per our assumption,
the numerator is bounded for fixed $k$ and large $n$.
Thus, for each $k$, $(\Pi_k \varepsilon^{(n)})_{n \geq 1}$ is uniformly bounded in probability. Hence the first requirement is met. As for the second requirement,
it is equivalent to
\begin{align}\label{e:tightnessb}
\lim_{k\to\infty} \, \sup_{n} \,
P\big( \sum_{l > k} | \langle \varepsilon^{(n)} , v_l \rangle |^2 > \epsilon \}\big) = 0,\quad
\text{for all $\epsilon > 0$},
\end{align}
where we let $(v_l)_{l \geq 1}$ be the eigenfunctions of $\Gamma$. By Markov's inequality and using $E| \langle \varepsilon^{(n)}  , v_l \rangle |^2=E \langle \widehat \Gamma_{\varepsilon,n} v_l , v_l \rangle$, \eqref{e:tightnessb} follows if
\begin{align*}
 \lim_{k\to\infty} \, \sup_{n} \, \sum_{l > k} E \langle \widehat \Gamma_{\varepsilon,n} v_l , v_l \rangle =0.
\end{align*}
For all self-adjoint and non-negative definite operators,
the trace norm can be written as
$
\| \Gamma \|_1 = \sum_{l\geq 1} \langle \Gamma v_l, v_l \rangle,
$
where all summands are non-negative.
Since
$
\big\| E \widehat\Gamma_{\varepsilon,n} - \Gamma \big\|_1 \leq E \big\| \widehat\Gamma_{\varepsilon,n} - \widehat C_\varepsilon \big\|_1 \to 0,
$
we can choose any $\epsilon > 0$ an
$N\geq 1$ such that for all $n > N$ and for all $k$ it holds that
$\| \Pi_k^\perp E \widehat \Gamma_{\varepsilon,n} \|_1 < \| \Pi_k^\perp \Gamma \|_1 + \epsilon$.
Hence
\begin{align*}
\lim_{k\to\infty} \, \sup_{n} \sum_{l > k} \langle E \widehat \Gamma_{\varepsilon,n} v_l , v_l \rangle
 \leq \max\Big(
 &\max_{1 \leq n \leq N} \lim_{k\to\infty} \sum_{l > k} \langle E \widehat \Gamma_{\varepsilon,n} v_l , v_l \rangle , \lim_{k\to\infty} \sum_{l > k} \langle \Gamma v_l , v_l \rangle + \epsilon
\Big) = \epsilon.
\end{align*}
Since $\epsilon>0$ is arbitrary, this concludes the proof. \qed

\bigskip

\noindent\textbf{Proof of Lemma~\ref{l:noiseconvDcont}.} First we note that under our assumptions,
the estimated noise covariance converges uniformly in probability.
Plugging
$\hat\varepsilon_{k,n} = (\varrho - \hat\varrho_n) X_k + \varepsilon_k$
into the definition of $\widehat \Gamma_{\varepsilon,n}$
and applying the Cauchy--Schwarz inequality along with $(a+b)^2 \leq 2a^2+2b^2$,
we obtain
\begin{align*}
\sup_{t,s\in[0,1]} \Big| \widehat \Gamma_{\varepsilon,n}(t,s) - \Gamma(t,s) \Big|
 &\leq K_* \sup_{t,s} \Bigg| \frac 1 n \sum_{k=1}^n \varepsilon_k\otimes \varepsilon_k(t,s) - \Gamma(t,s) \Bigg| \\
 & \quad + K_* \sup_{t,s} \Bigg| \frac 1 n \sum_{k=1}^n (\varrho X_k- \hat\varrho_n X_k) \otimes (\varrho X_k- \hat\varrho_n X_k)(t,s) \Bigg|.
\end{align*}
The second part converges to zero in probability
by assumption, and for the first part we can apply the law of large numbers in Banach space; see Theorem 2.4 of \cite{bosq:2000}. Therefore, $\widehat \Gamma_{\varepsilon,n}$ converges uniformly in probability to $\Gamma$. To show weak convergence in $C[0,1]$, we apply Theorem~7.5
from \cite{billingsley:1999}.
First, we show that for any finite collection of $t_1, t_2, \dots, t_k \in [0,1]$,
\[
\big( \varepsilon^{(n)}(t_1), \dots, \varepsilon^{(n)}(t_k) \big) \convd \big( \varepsilon(t_1), \dots, \varepsilon(t_k) \big).
\]
The right-hand side of this is multivariate Gaussian with mean zero and
covariance matrix $G = \big( \Gamma(t_i,t_j) \big)_{i,j}$.
Conditionally on $ \widehat\Gamma_{\varepsilon,n}$, the covariance matrix of the left-hand side is $G_n = \big( \widehat\Gamma_{\varepsilon,n}(t_i,t_j) \big)_{i,j}$
and it converges uniformly in probability to $G$.
To show the convergence in distribution of the vector,
the same approach as in the proof of Lemma~\ref{l:noiseconvD} can be used,
and we omit the details.

Next we show the tightness. To this end we prove that
\begin{align}
\lim_{\eta \to 0} \limsup_{n\to\infty}
   P\Big( \sup_{|t-s| \leq \eta} | \varepsilon^{(n)}(t) - \varepsilon^{(n)}(s) | \geq \epsilon \Big) &= 0,
 \quad \forall\, \epsilon>0.
\label{e:tightnesscont}
\end{align}
We will make use of
Dudley's theorem \citep{talagrand:2014}, %
which states that for a Gaussian process $W$ on $[0,1]$,
\[
E\Bigg[ \sup_{d_W(t,s) \leq \eta} | W(t)-W(s) | \Bigg] \leq K \int\limits_0^{\eta} \sqrt{\log N(z,d_W)} dz,
\]
where the pseudometric $d_W$ is defined by $d^2_W(t,s) = \var(W(t)-W(s))$
and $N(z,d_W)$ denotes the number of balls of size $z$ in the pseudometric $d_W$
that are needed to cover the whole interval $[0,1]$.
Thus, if we fix an $n$, then conditionally on $\widehat\Gamma_{\varepsilon,n}$ we have
\[
E\Bigg[ \sup_{V_n(t,s) \leq M_V \eta^\alpha}|\varepsilon^{(n)}(t) - \varepsilon^{(n)}(s)| \, \Bigg| \,  \widehat \Gamma_{\varepsilon,n} \Bigg] \leq K \int\limits_0^{M_V \eta^\alpha} \sqrt{\log N(z,V_n)} dz,
\]
where $K$ is a universal constant.
We also remark that $M_V$ is $\sigma(\widehat\Gamma_{\varepsilon,n})$-measurable.
By Assumption~\ref{a:consist2C}~(b) we have that $V_n^2(t,s) < M_V^2 |t-s|^{2\alpha}$ and hence $|t-s| \leq \eta$ implies $V_n(t,s) < M_V \eta^\alpha$. Moreover, $N(z,V_n) \leq \big( M_V/z \big)^{1/\alpha}$.
Thus, we obtain
\begin{align*}
E \Bigg[ \sup_{|t-s| \leq \eta} |\varepsilon^{(n)}(t) - \varepsilon^{(n)}(s)| \, \Bigg| \,  \widehat \Gamma_{\varepsilon,n} \Bigg]
 \leq K \int_0^{M_V \eta^\alpha} \sqrt{\log N(z,d_n)} dz  \leq \frac{K}{\sqrt \alpha}  M_V \int_0^{\eta^\alpha} \sqrt{\log \big( 1/z \big) } dz.
\end{align*}
Taking the expectations, we see that
\[
E \Bigg[ \sup_{|t-s| \leq \eta} |\varepsilon^{(n)}(t) - \varepsilon^{(n)}(s)| \Bigg] \leq \frac{K}{\sqrt \alpha}  E M_V \int_0^{\eta^\alpha} \sqrt{\log \big( 1/z \big) } dz
 =: R(\eta),
\]
with the bound $R(\eta)$ being independent from $n$ and vanishing as $\eta\to 0$.
In particular we note that
$\varepsilon^{(n)}$ has a modification with continuous sample paths,
i.e. $\varepsilon^{(n)} \in C[0,1]$ for all $n$.
Returning to \eqref{e:tightnesscont},
we obtain by Markov's inequality that
\begin{align*}
\lim_{\eta \to 0} \limsup_{n\to\infty}
   P\Big( \sup_{|t-s| \leq \eta} | \varepsilon^{(n)}(t) - \varepsilon^{(n)}(s) | \geq \epsilon \Big)
 &\leq \frac{1}{\epsilon} \lim_{\eta \to 0} \limsup_{n\to\infty}
   E\Bigg[ \sup_{|t-s| \leq \eta} | \varepsilon^{(n)}(t) - \varepsilon^{(n)}(s) | \Bigg]=0.
\end{align*}
This holds for all $\epsilon$
and therefore implies $\varepsilon^{(n)} \xrightarrow{d} \varepsilon$ in $C[0,1]$. \qed

\bigskip

\noindent\textbf{Proof of Lemma~\ref{l:conditionalslutsky}.} Define the set $C=C_1\cap C_2$ with
\begin{align*}
C_1&=\{x\in H_1\colon \hat f_n(x)\convP f(x)\}
& \!\text{and}\! &&
C_2&=\{x\in H_1\colon P\big( g\big(\varepsilon, f(x)\big) \in\partial A \big)=0\}.
\end{align*}
It can thus be easily deduced that for $x\in C$ we have
$(\hat f_n(x),\varepsilon^{(n)})\convd (f(x),\varepsilon)$. %
This in turn implies by the continuous mapping theorem that $g\big(\varepsilon^{(n)}, \hat f_n(x) \big) \convd g\big(\varepsilon, f(x) \big)$.
For  $A$ such that $P\big( g\big(\varepsilon, f(X) \big) \in \partial A \big) = 0$, and
if $x\in C$, it follows that
\[
h_n(x):=P\big( g\big(\varepsilon^{(n)}, \hat f_n(x) \big) \in A \big)\to h(x):=P\big( g\big(\varepsilon, f(x) \big) \in A \big).
\]
Now notice that if $X$ is independent of $\varepsilon^{(n)}$, $\varepsilon$ and $\hat f_n$, then we have that
\[
h_n(X)=P\big( g\big(\varepsilon^{(n)}, \hat f_n(X) \big) \in A \, | X \big)
\quad\text{and}\quad
h(X)=P\big( g\big(\varepsilon, f(X) \big) \in A \, | X \big).
\]

What is left to show is that $P(X\in C)=1$.
We have by assumption that $P(X\in C_2)=1$.
We know that for all $k \geq 1$,
\begin{align*}
E\Big[ \lim_{n\to\infty} P\big( \| \hat f_n(X) - f(X) \| \geq 1/k \, | X \big) \Big]
 &= \lim_{n\to\infty} E\Big[ P\big( \| \hat f_n(X) - f(X) \| \geq 1/k \, | X \big) \Big] \\
 &= \lim_{n\to\infty} P\big( \| \hat f_n(X) - f(X) \| \geq 1/k \big) = 0,
\end{align*}
where we used dominated convergence %
and \eqref{a:consistOut2}.
Because the expectation vanishes, it must hold that
\[
P\Big( \lim_{n\to\infty} P\big( \| \hat f_n(X) - f(X) \| \geq 1/k \, | X \big) > 0 \Big) = 0,
 \qquad \forall\, k \geq 1.
\]
Therefore, by $\sigma$-subadditivity
\begin{align*}
P(X \in C_1) &=
P\Big( \forall k\colon
        \lim_{n\to\infty} P\big( \| \hat f_n(X) - f(X) \| \geq 1/k \, | X \big) = 0 \Big) = 1.
\end{align*}
Weak convergence follows via the Portmanteau theorem, concluding the proof.\qed

\bigskip

\noindent\textbf{Proof of Proposition~\ref{p:mainc2}.} We first show that Assumption~\ref{a:consist2C}~(a) holds.
We use the same decomposition of $(\varrho - \hat\varrho_n) X_k$
as before to obtain
\begin{align*}
&\sup_{t,s \in[0,1]} \Bigg| \frac 1 n \sum_{k=1}^n (\varrho X_k- \hat\varrho_n X_k)\otimes (\varrho X_k- \hat\varrho_n X_k)(t,s) \Bigg|\\
&\quad \leq K_* \sup_{t,s} \Bigg| \frac 1 n \sum_{k=1}^n \big(\widehat C_{\varepsilon X} \widehat C_X^\invtn X_k\big)\otimes \big(\widehat C_{\varepsilon X} \widehat C_X^\invtn X_k\big)(t,s) \Bigg|\\
 &\qquad+ K_* \sup_{t,s} \Bigg| \frac 1 n \sum_{k=1}^n \big(\varrho \widehat\Pi^\perp_{T_n} X_k\big) \otimes \big(\varrho \widehat\Pi^\perp_{T_n} X_k\big)(t,s) \Bigg|.
\end{align*}
For the first summand on the right we have by \eqref{e:p4}
\begin{align*}
&\sup_{t,s} \Bigg| \frac 1 n \sum_{k=1}^n \big(\widehat C_{\varepsilon X} \widehat C_X^\invtn X_k\big) \otimes \big(\widehat C_{\varepsilon X} \widehat C_X^\invtn X_k\big)(t,s) \Bigg|
 = \sup_{t,s} \Bigg| \frac{1}{n^2} \sum_{i=1}^n \sum_{j=1}^n
  \varepsilon_i(t) \varepsilon_j(s) \langle \widehat C_X^\invtn X_i , X_j \rangle \Bigg| \\
 &\leq m_n \, \sup_{t} \Bigg\| \frac{1}{n} \sum_{i=1}^n \varepsilon_i(t) X_i \Bigg\|_{H_1}^2 = \mathcal{O}_P\big( m_n \, n^{-\alpha/2} \big),
\end{align*}
which by our choice of $m_n$ converges to zero.
For the second summand,
we take the expectation and see that
with our choice of $T_n$,
\begin{align*}
E \Bigg[  \sup_{t,s} \Big| \frac 1 n \sum_{k=1}^n \big(\varrho \widehat\Pi^\perp_{T_n} X_k\big)\otimes \big(\varrho \widehat\Pi^\perp_{T_n} X_k\big)(t,s) \Big|  \Bigg]
 &\leq \|\varrho\|_C^2 \; E \big\| \widehat\Pi^\perp_{T_n} \widehat C_X \widehat\Pi^\perp_{T_n} \big\|
 = \|\varrho\|_C^2 \; E\hat\lambda_{T_n+1} \to 0.
\end{align*}
Thus, the second summand is $o_P(1)$.
Therefore, Assumption~\ref{a:consist2C}~(a) holds. As for Assumption~\ref{a:consist2C}~(b),
we observe
\begin{align*}
\var\big( \varepsilon^{(n)}(t) - {}& \varepsilon^{(n)}(s) \big| \widehat\Gamma_{\varepsilon,n} \big)
 = \frac 1n \sum_{k=1}^n \big( \hat\varepsilon_{k,n}(t) - \hat\varepsilon_{k,n}(s) \big)^2 \\
 &\leq \frac{K_*}{n} \sum_{k=1}^n \big( \varepsilon_k(t) - \varepsilon_k(s) \big)^2
   + \frac{K_*}{n} \sum_{k=1}^n \Big( \big(\varrho \widehat\Pi^\perp_{T_n} X_k\big)(t) - \big(\varrho \widehat\Pi^\perp_{T_n} X_k\big)(s) \Big)^2 \\
 &\quad
   + K_* \, \frac{1}{n} \sum_{k=1}^n \big\langle \widehat C^\invtn_X X_k , \frac 1n \sum_{i=1}^n \big( \varepsilon_i(t) - \varepsilon_i(s) \big) X_i \big\rangle^2.
\end{align*}
Note that
\begin{align*}
\frac{1}{n} \sum_{k=1}^n \big\langle \widehat C^\invtn_X X_k , \,\cdot\, \big\rangle^2
 =
\big\langle \widehat C^\invtn_X \frac{1}{n} \sum_{k=1}^n X_k \otimes X_k C^\invtn_X \,\cdot\, , \,\cdot\, \big\rangle
 =
\big\langle \widehat C^\invtn_X \,\cdot\, , \,\cdot\, \big\rangle
 \leq \|C^\invtn_X\| \, \|\cdot\|^2.
\end{align*}
It follows from Assumption~\ref{a:continuoussetting} that
\begin{align*}
\var\big( \varepsilon^{(n)}(t) - \varepsilon^{(n)}(s) \big| \widehat\Gamma_{\varepsilon,n} \big)
 &\leq \frac{K_*}{n} \sum_{k=1}^n M_k^2  \, |t-s|^{2\alpha}
   + \frac{K_* M_\varrho^2 }{n} \sum_{k=1}^n \| \widehat\Pi^\perp_{T_n}  X_k \|^2  \, |t-s|^{2\alpha} \\
 &\quad
   + K_* \, m_n \, \Big\| \frac 1n \sum_{i=1}^n \big( \varepsilon_i(t) - \varepsilon_i(s) \big) X_i \Big\|^2.
\end{align*}
Taking the expectation and using \eqref{e:cepsxincrement} for the last term,
we have that
\begin{align*}
E\Big[ \var\big( \varepsilon^{(n)}(t) - \varepsilon^{(n)}(s) \big| \widehat\Gamma_{\varepsilon,n} \big) \Big]
 &= \mathcal{O}\big(m_n/n\big) \, |t-s|^{2\alpha}.
\end{align*}
Since $m_n/n \to 0$, the right-hand side is bounded uniformly in $n$
and Assumption~\ref{a:consist2C}~(b) holds.

\medskip

\begin{lemm}\label{l:helplemm1}
Suppose $\|y_n-y\|_\infty\to 0$ and $\lambda(y=\alpha)=0$. Then $\lambda(y_n>\alpha)\to \lambda(y>\alpha)$.
\end{lemm}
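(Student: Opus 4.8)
The plan is to sandwich $\lambda(y_n>\alpha)$ between two quantities that depend only on $y$ and a small slack parameter $\varepsilon>0$, and then let $\varepsilon\downarrow 0$, using the null-boundary hypothesis $\lambda(y=\alpha)=0$ to force the two bounds to collapse.

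First I would fix $\varepsilon>0$. Since $\|y_n-y\|_\infty\to 0$, there is an $N$ such that $|y_n(t)-y(t)|<\varepsilon$ for all $t\in[0,1]$ whenever $n\ge N$. Consequently, for such $n$,
\[
\{t : y(t) > \alpha+\varepsilon\} \subseteq \{t : y_n(t) > \alpha\} \subseteq \{t : y(t) > \alpha-\varepsilon\},
\]
and applying Lebesgue measure gives $\lambda(y>\alpha+\varepsilon)\le \lambda(y_n>\alpha)\le \lambda(y>\alpha-\varepsilon)$ for all $n\ge N$. Taking $\limsup$ and $\liminf$ in $n$ yields $\liminf_n\lambda(y_n>\alpha)\ge\lambda(y>\alpha+\varepsilon)$ and $\limsup_n\lambda(y_n>\alpha)\le\lambda(y>\alpha-\varepsilon)$.

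Next I would send $\varepsilon\downarrow 0$. The sets $\{y>\alpha+\varepsilon\}$ increase to $\{y>\alpha\}$ and the sets $\{y>\alpha-\varepsilon\}$ decrease to $\bigcap_{\varepsilon>0}\{y>\alpha-\varepsilon\}=\{y\ge\alpha\}$; by continuity of the finite measure $\lambda$ from below and from above, $\lambda(y>\alpha+\varepsilon)\to\lambda(y>\alpha)$ and $\lambda(y>\alpha-\varepsilon)\to\lambda(y\ge\alpha)$. The key step is then that $\lambda(y=\alpha)=0$ gives $\lambda(y\ge\alpha)=\lambda(y>\alpha)+\lambda(y=\alpha)=\lambda(y>\alpha)$, so both the upper and lower bounds equal $\lambda(y>\alpha)$, whence $\lim_n\lambda(y_n>\alpha)=\lambda(y>\alpha)$.

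The argument is elementary, and continuity of $y,y_n$ enters only to ensure the relevant level sets are measurable (they are open, respectively closed). The one point I would be careful about is the passage to the limit in $\varepsilon$, where one needs continuity of measure from above — legitimate because $\lambda$ is finite on $[0,1]$ — together with the hypothesis $\lambda(y=\alpha)=0$ to identify the two one-sided limits; this hypothesis is genuinely necessary, as $y\equiv\alpha$ with $y_n\equiv\alpha+1/n$ shows.
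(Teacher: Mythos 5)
Your proof is correct, and it proceeds by a more elementary route than the paper. The paper's argument is probabilistic in flavor: it views $y_n$ and $y$ as random variables on the probability space $([0,1],\mathcal{B}([0,1]),\lambda)$, observes that $\|y_n-y\|_\infty\to 0$ implies convergence in distribution of $y_n$ to $y$, and then invokes the standard fact that the distribution functions $F_n(t)=\lambda(y_n\leq t)$ converge to $F(t)=\lambda(y\leq t)$ at every continuity point of $F$, with the hypothesis $\lambda(y=\alpha)=0$ serving exactly to make $\alpha$ such a continuity point. Your sandwich argument --- trapping $\{y_n>\alpha\}$ between $\{y>\alpha+\varepsilon\}$ and $\{y>\alpha-\varepsilon\}$ for large $n$, then letting $\varepsilon\downarrow 0$ via continuity of the finite measure $\lambda$ from below and above, and using $\lambda(y=\alpha)=0$ to collapse the bounds --- is essentially an unpacked, self-contained proof of that same portmanteau-type fact in the special case needed. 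What the paper's route buys is brevity, by citing weak-convergence theory as a black box; what yours buys is transparency and minimal machinery (only monotone continuity of measure), and it makes visibly explicit where the null-boundary hypothesis enters and why it is necessary, as your counterexample $y\equiv\alpha$, $y_n\equiv\alpha+1/n$ shows. Both arguments are valid, and your handling of the limit $\lambda(y>\alpha-\varepsilon)\to\lambda(y\geq\alpha)$ correctly uses finiteness of $\lambda$ on $[0,1]$.
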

\begin{proof} We can consider $y_n$ and $y$ as random variables on the probability space $([0,1],\mathcal{B}([0,1]),\lambda)$. Convergence $\|y_n-y\|_\infty\to 0$ implies also weak convergence~$y_n\convd y$. This
means that $$F_n(t):=\lambda(y_n\leq t)\to F(t):=\lambda(y\leq t)$$
in all continuity points of $F$. The condition $\lambda(y=\alpha)=0$ implies that $\alpha$ is a continuity point.
\end{proof}

\medskip

\noindent\textbf{Proof of Proposition~\ref{l:level1}. }  We define the set $$M=\{y\in C[0,1]\colon \lambda(y>\alpha)=z\}\cup \{y\in C[0,1]\colon \lambda(y=\alpha)>0\}.$$ We show that $\partial A\subset M$ and by assumption $P(Y\in M)=0$. To this end we note that $y\in\partial A$ means that there exists a sequence $(y_n)$ such that (i) $\|y_n-y\|_\infty \to 0$ and one of the pairs
\begin{align*}
\text{(ii)}\quad& y\in A\quad\text{and\quad(iii)}\quad  (y_n)\cap A=\emptyset;\\
\text{(ii')}\quad& y\notin A \quad\text{and\quad(iii')}\quad (y_n)\subset A;
\end{align*}
holds. We assume that (ii) $\lambda(y>\alpha)\leq z$ and  (iii) $\lambda(y_n>\alpha)> z$ for all $n\geq 1$. We show that (i), (ii) and (iii) imply that $y\in M$.  To  achieve a contradiction, we suppose that (i), (ii) and (iii) hold along with
$$
\text{(iv) $\lambda(y>\alpha)\neq z$ and (v) $\lambda(y=\alpha)=0$.}
$$
By (ii) and (iv) it follows that (vi) $\lambda(y>\alpha)< z$.
In Lemma~\ref{l:helplemm1} below we will show that (i) and (v) imply that
$$
\text{(vii) $|\lambda(y>\alpha)-\lambda(y_n>\alpha)|\to 0.$}
$$
Now (vi) and (iii) contradict to (vii). For the case (ii') and (iii') we can argue similarly. This shows part (a). For part (b) we can argue along the same line as for part (a). \qed

\bigskip

\noindent\textbf{Proof of Proposition~\ref{gauss-lemm}.}   Fix $z\in (0,1)$ and $\epsilon > 0$, and let $\lambda(Y>\alpha)=m_\alpha$.  Since $Y^* = Y - \alpha$ is also a continuously differentiable Gaussian process sharing the same covariance as $Y$, we may assume without loss of generality that $\alpha = 0 $ in the definition of $m_\alpha$. Let $N_Y = | \{ t \; : \; Y(t) = 0\}|$, where $|A|$ denotes the number of elements in $A$. By Theorem 1 of \cite{bulinskaya:1961}, $N_Y$ is finite with probability one, which implies \eqref{e:condlevel}(i). Moreover, letting $\ell = \inf\{ |u-v| \; : \; u\ne v,\; \; Y(u) = Y(v) = 0\}$, with $\inf \emptyset = \infty$, it follows from the proof of Theorem 1 in \cite{bulinskaya:1961}, specifically equation (2) therein and the subsequent calculations, that there exists $\ell_\epsilon \in (0,1)$ so that $P( \ell < \ell_\epsilon) < \epsilon$. From this we obtain that

  \begin{align}\label{gauss-calc-1}
  P( m_\alpha = x ) \le  P( \{ m_\alpha = z \} \cap \{ \ell \ge \ell_\epsilon \} )  + \epsilon   = \sum_{k=0}^{\infty} P( \{ m_\alpha = z \} \cap \{ \ell \ge \ell_\epsilon\} \cap \{ N_Y = k \} )  + \epsilon.
  \end{align}
  Clearly $P( \{ m_\alpha = z \} \cap \{ \ell \ge \ell_\epsilon \} \cap \{ N_Y = 0\} ) = 0$. For each $k\ge1$, we define the following collections of points in $(0,1)$, with $t_0=0$ and $t_{k+1}=1$:

 \begin{align*}
  T_{k, \epsilon }^{(1)} = \Biggl\{ 0 < t_1 < \cdots < t_k <1 \; :  \;  \sum_{i \in \{ 1,...,k+1\}, i \mbox{ odd } } t_i - t_{i-1} =z, \; \min_{2 \le i \le k} t_i - t_{i-1} \ge \ell_\epsilon    \Biggl\}.
  \end{align*}

 \begin{align*}
  T_{k, \epsilon }^{(2)} = \Biggl\{ 0 < t_1 < \cdots < t_k <1 \; : \;  \sum_{i \in \{ 2,...,k\}, i \mbox{ even } } t_i - t_{i-1} =z,  \; \min_{2 \le i \le k} t_i - t_{i-1} \ge \ell_\epsilon  \Biggl\}.
 \end{align*}
Set $T_{k, \epsilon }= T_{k, \epsilon }^{(1)} \cup T_{k, \epsilon }^{(2)}$. $T_{k, \epsilon }^{(1)}$ and  $T_{k, \epsilon }^{(2)}$ define the set of all possible points at which the process $Y$ can cross zero $k$ times, with each crossing spaced at least a width $\ell_\epsilon$ apart, such that $m_\alpha = z$, respective of whether the process starts above or below zero. Evidently then for $k\ge 1$,

  \begin{align}\label{gauss-calc-2}
  \{ m_\alpha = z \} \cap \{ \ell \ge \ell_\epsilon\} \cap \{ N_Y = k \}  \subset  \{ \exists (t_1,...,t_k) \in T_{k, \epsilon }, Y(t_1) = \cdots= Y(t_k )  = 0   \} =: \mathcal{A}_k.
  \end{align}
  We define for all $n\ge 1$
  \begin{align*}
  Z_{k,\epsilon}(n)  = \Big\{ 0 < & z_1(n) < \cdots < z_k(n) <1 \; : \; z_i(n) =  \frac{j}{n} \mbox{ for some }  j \in \{1,...,n\},  \\
   & \max_{1\le i \le k } |z_i(n) - t_i| \le \frac{2}{n}, \mbox{ for some } (t_1,...,t_k) \in T_{k, \epsilon } \Big\}.  \\
  \end{align*}
  Elementary calculations making use of the fact that the definitions of $T_{k, \epsilon }^{(1)}$ and  $T_{k, \epsilon }^{(2)}$ prescribe a single linear constraint on the points $t_1,...,t_k\in T_{k, \epsilon }$ give that  $|Z_{k,\epsilon}(n)| \le K_* n^{k-1}$. For $k\ge 1$, let $b_n$ be a sequence of real numbers tending to infinity such that $b_n = o(n^{1/k})$. Then, due to the assumption that $Y$ is continuously differentiable, if $\mathcal{B}_n  = \{ \| Y' \|_\infty > b_n \}$, $P(\mathcal{B}_n)\to 0 $ as $n\to \infty$, where $Y'$ is the derivative of $Y$. It follows that
  $$P(\mathcal{A}_k ) = P(\mathcal{A}_k \cap \mathcal{B}_n^c) + o(1)  \;\; (n\to \infty).  $$
  Note that on the set $\mathcal{B}_n^c$, the derivative of $Y$ is uniformly bounded by $b_n$, and hence on this set at any point $t$ at which $Y(t)=0$, there must exist a nearby point of the form $z(n) = j/n$ for some $j \in \{1,...,n\}$ such that $|t-z(n)|< 2/n$, and $|Y(z(n))| \le K_* b_n/n$. It follows that

  \begin{align*}
    P(\mathcal{A}_k \cap \mathcal{B}_n^c) &\le P \left( \bigcup_{z_1(n),...,z_k(n) \in Z_{k,\epsilon}(n) } \Big\{ \max_{1\le i \le k}|Y(z_i(n))| < \frac{b_n}{n}\Big\} \right)  \\
    &  \le \sum_{z_1(n),...,z_k(n) \in Z_{k,\epsilon}(n) } P\left( \max_{1\le i \le k}|Y(z_i(n))| < \frac{b_n}{n} \right) \le K_* n^{k-1} \frac{b_n^k}{n^k},
  \end{align*}
 where in the last inequality, we used the fact that the joint density of $Y(z_1(n)),...,Y(z_k(n))$ is bounded from above for all $z_1(n),...,z_k(n) \in Z_{k,\epsilon}(n)$. This follows from the assumption that the determinant of the correlation matrix of $Y(z_1(n)),...,Y(z_k(n))$ is bounded from below by a constant multiplied by a positive power of the minimal distance between $z_1(n),...,z_k(n)$, the latter of which is bounded from below for $z_1(n),...,z_k(n) \in Z_{k,\epsilon}(n)$ by $\ell_\epsilon + 2/n$. Letting $n\to \infty$ gives then that $P(\mathcal{A}_k)=0$, which in conjunction with \eqref{gauss-calc-1} and \eqref{gauss-calc-2} implies \eqref{e:condlevel}(ii). When $Y$ is twice continuously differentiable with $(Y(t_1),...,Y(t_d),Y'(t_1),...,Y'(t_d),Y''(t_1),...,Y''(t_d))$ having a non-degenerate distribution, \eqref{e:condlevel2} follows from Theorem 7.3 of \cite{Azais:1991055}.\qed

\FloatBarrier

\setcounter{bottomnumber}{2}
\renewcommand\bottomfraction{0.8}

\clearpage

\section{Additional simulation results }\label{s:additionalsim}
\FloatBarrier

We present here some additional simulation results from Section~\ref{ss:pm10data}.
For each sample we estimated the conditional probability of $Y_{1}^*$ lying in the level set $P(\lambda(Y_{1}^*  > \sqrt{50}) \leq 0.5| Y_0^*)$, and $Y_{1}^* $ lying in the contrast set $P( \langle Y_{1}^*  , \gamma \rangle > 0.5 | Y_0^*)$.
We took the function $\gamma$ to be a step-function that compares the mean between 2-4am and 7-9am, thus estimating the rise of the pollutant levels every morning.
We compared the estimators from algorithms {\bf boot} and {\bf Gauss}, as well as from a probit GLM and Nadaraya--Watson estimation.
We calibrated the bandwidth $h$ for the Nadaraya--Watson estimator using leave-one-out cross-validation on each generated sample.
The results in terms of the root mean squared error (RMSE) over the 1000 simulations are displayed in Figure~\ref{fig:PM10RMSEcontrast} for the contrast set.
Additional results for specific values of $Y_0^*$ are displayed in Table~\ref{tab:pm10rmsecontrast}.
We remark that the choice of a probit or a logit link in the GLM
appeared to have little influence on the predictive performance of the level set case.
For contrast sets, given how the data was generated,
the functional logistic regression model with probit link is in fact correctly specified,
and we saw in this case that it tended to perform similarly well as {\bf boot} and {\bf Gauss} in large sample sizes.
In smaller sample sizes, its performance suffers from the lost information during the estimation.

\begin{table}
\centering
{\small
\begin{tabular}{|rr|rrrr|rrrr|}
  \hline
 &   & \multicolumn{4}{c|}{$n=50$} & \multicolumn{4}{c|}{$n=100$} \\
 $Y_0^*$ & ${P(Y_{1}^*  \in A|Y_0^*)}$\! & \textbf{boot} & \textbf{Gauss} & GLM & N--W & \textbf{boot} & \textbf{Gauss} & GLM & N--W \\
  1 & 0.053  & 0.139 & 0.134 & 0.190 & 0.356  & 0.086 & 0.083 & 0.099 & 0.252 \\
  2 & 0.449  & 0.157 & 0.155 & 0.238 & 0.217  & 0.126 & 0.122 & 0.165 & 0.166 \\
  3 & 0.772  & 0.095 & 0.088 & 0.139 & 0.140  & 0.068 & 0.063 & 0.100 & 0.112 \\
  4 & 0.914  & 0.066 & 0.059 & 0.092 & 0.189  & 0.045 & 0.041 & 0.062 & 0.160 \\
  5 & 0.969  & 0.037 & 0.033 & 0.045 & 0.134  & 0.022 & 0.018 & 0.027 & 0.101 \\
  \hline
 &   & \multicolumn{4}{c|}{$n=250$} & \multicolumn{4}{c|}{$n=1000$} \\
 $Y_0^*$ & ${P(Y_{1}^*  \in A|Y_0^*)}$\! & \textbf{boot} & \textbf{Gauss} & GLM & N--W & \textbf{boot} & \textbf{Gauss} & GLM & N--W \\
  1 & 0.053  & 0.052 & 0.050 & 0.054 & 0.181  & 0.034 & 0.033 & 0.036 & 0.129 \\
  2 & 0.449  & 0.080 & 0.077 & 0.104 & 0.129  & 0.045 & 0.045 & 0.060 & 0.101 \\
  3 & 0.772  & 0.046 & 0.042 & 0.066 & 0.080  & 0.028 & 0.028 & 0.048 & 0.052 \\
  4 & 0.914  & 0.028 & 0.027 & 0.039 & 0.126  & 0.019 & 0.018 & 0.023 & 0.089 \\
  5 & 0.969  & 0.014 & 0.012 & 0.017 & 0.075  & 0.010 & 0.009 & 0.009 & 0.048 \\
  \hline
\end{tabular}
}
\caption{ \label{tab:pm10rmselevel} RMSE for different sample sizes, 5 different predictors and 1000 replications for simulated PM$_{10}$ data. The comparison value GLM is a logistic regression model, N--W is the Nadaraya--Watson estimator. We estimate $P(\lambda(Y_{1}^* > \sqrt{50}) \leq 0.5 |Y_0^*)$. }
\end{table}

\begin{figure}[t]
\includegraphics[width=\textwidth]{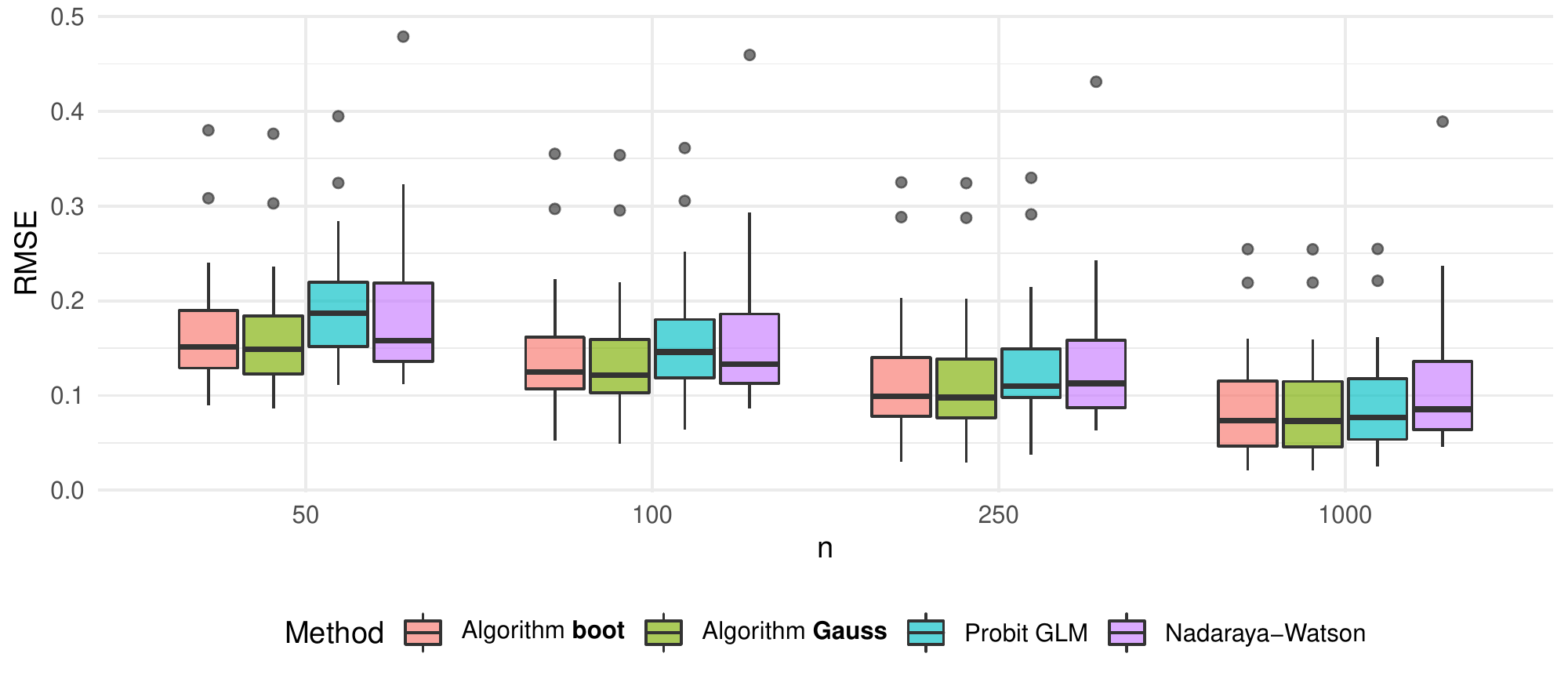}
\caption{ RMSE of $\hat P$ for 50 random predictors $Y_0^*$ and 1000 independent simulations of samples of size $n\in \{ 50, 100, 250, 1000 \}$ based on the estimators {\bf boot}, {\bf Gauss}, functional logistic regression, and Nadaraya--Watson estimators of the probability that the response lies in the contrast set $P( \langle Y_{1}^* , \gamma \rangle > 0.5 | Y_0^*)$.}\label{fig:PM10RMSEcontrast}
\end{figure}

\begin{table}
\centering
{\small
\begin{tabular}{|rr|rrrr|rrrr|}
  \hline
 &   & \multicolumn{4}{c|}{$n=50$} & \multicolumn{4}{c|}{$n=100$} \\
 $Y_n^*$ & ${P(Y_{1}  \in A|Y_n^*)}$\! & \textbf{boot} & \textbf{Gauss} & GLM & N--W & \textbf{boot} & \textbf{Gauss} & GLM & N--W \\
  1 & 0.161  & 0.090 & 0.086 & 0.111 & 0.166  & 0.067 & 0.065 & 0.079 & 0.132 \\
  2 & 0.110  & 0.149 & 0.147 & 0.155 & 0.232  & 0.137 & 0.135 & 0.138 & 0.219 \\
  3 & 0.524  & 0.180 & 0.177 & 0.200 & 0.179  & 0.163 & 0.160 & 0.172 & 0.167 \\
  4 & 0.417  & 0.149 & 0.144 & 0.187 & 0.123  & 0.115 & 0.112 & 0.145 & 0.102 \\
  5 & 0.356  & 0.194 & 0.192 & 0.227 & 0.150  & 0.170 & 0.169 & 0.192 & 0.127 \\
  \hline
 &   & \multicolumn{4}{c|}{$n=250$} & \multicolumn{4}{c|}{$n=1000$} \\
 $Y_n^*$ & ${P(Y_{1}  \in A|Y_n^*)}$\! & \textbf{boot} & \textbf{Gauss} & GLM & N--W & \textbf{boot} & \textbf{Gauss} & GLM & N--W \\
  1 & 0.161  & 0.057 & 0.056 & 0.062 & 0.085  & 0.049 & 0.048 & 0.051 & 0.049 \\
  2 & 0.110  & 0.120 & 0.119 & 0.123 & 0.192  & 0.086 & 0.086 & 0.087 & 0.160 \\
  3 & 0.524  & 0.150 & 0.149 & 0.154 & 0.149  & 0.108 & 0.107 & 0.111 & 0.140 \\
  4 & 0.417  & 0.082 & 0.080 & 0.099 & 0.070  & 0.049 & 0.048 & 0.056 & 0.050 \\
  5 & 0.356  & 0.152 & 0.151 & 0.162 & 0.114  & 0.118 & 0.117 & 0.121 & 0.110 \\
  \hline
\end{tabular}
}
\caption{\label{tab:pm10rmsecontrast} RMSE for different sample sizes, 5 different predictors and 1000 replications for simulated PM$_{10}$ data. The comparison value GLM is a probit regression model, N--W is the Nadaraya--Watson estimator. We estimate the probability of the contrast set $P(\langle Y_{1}^* , \gamma \rangle > 0.5 | Y_0^*)$.}
\end{table}

\end{document}